    \def\IfEmptyTF#1%
\relax\detokenize{#1}\relax%
\NewDocumentCommand{\mathOrText}{m}
{%
    \ensuremath{#1}\xspace%
}
\let\originalleft\left
\let\originalright\right
\renewcommand{\left}{\mathopen{}\mathclose\bgroup\originalleft}
\renewcommand{\right}{\aftergroup\egroup\originalright}
    \DeclareRobustCommand{\bfseries}%
    {%
        \not@math@alphabet\bfseries\mathbf%
        \fontseries\bfdefault\selectfont%
        \boldmath%
    }
\crefname{ineq}{inequality}{inequalities}
\crefname{term}{term}{terms}
\crefname{cond}{condition}{conditions}
\crefname{assume}{assumption}{assumptions}
\NewDocumentCommand{\functionTemplate}{m m m m o}%
{%
    \IfNoValueTF{#5}%
    {%
        \mathOrText{#1\left#2{#4}\right#3}%
    }%
    {%
        \mathOrText{#1#5#2{#4}#5#3}%
    }%
}
\newcommand*{\leftBracketType}{(}
\newcommand*{\rightBracketType}{)}
\NewDocumentCommand{\createFunction}{m m o o}%
{%
    \renewcommand*{\leftBracketType}{\IfNoValueTF{#3}{(}{#3}}%
    \renewcommand*{\rightBracketType}{\IfNoValueTF{#4}{)}{#4}}%
    \NewDocumentCommand{#1}{o o}%
    {%
        \IfNoValueTF{##1}%
        {%
            \mathOrText{#2}%
        }%
        {%
            \functionTemplate{#2}{\leftBracketType}{\rightBracketType}{##1}[##2]%
        }%
    }%
}
\DeclareDocumentCommand{\probabilisticFunctionTemplate}{m m O{} o}
{%
    \functionTemplate{#1}%
    {\lbrack}%
    {\rbrack}%
    {#2\IfEmptyTF{#3}{}{\ \IfNoValueTF{#4}{\left}{#4}\vert\ \vphantom{#2}#3\IfNoValueTF{#4}{\right.}{}}}%
    [#4]%
}
\newcommand*{\N}{\mathOrText{\mathds{N}}}
\newcommand*{\R}{\mathOrText{\mathds{R}}}
\newcommand*{\indicatorFunctionSymbol}{\mathds{1}}
\RenewDocumentCommand{\Pr}{m O{} o}%
{%
    \probabilisticFunctionTemplate{\mathrm{Pr}}{#1}[#2][#3]%
}
\NewDocumentCommand{\E}{m O{} o}%
{%
    \probabilisticFunctionTemplate{\mathrm{E}}{#1}[#2][#3]%
}
\NewDocumentCommand{\Var}{m O{} o}%
{%
    \probabilisticFunctionTemplate{\mathrm{Var}}{#1}[#2][#3]%
}
\DeclareDocumentCommand{\bigO}{m o}%
{%
    \functionTemplate{\mathrm{O}}{(}{)}{#1}[#2]%
}
\DeclareDocumentCommand{\smallO}{m o}%
{%
    \functionTemplate{\mathrm{o}}{(}{)}{#1}[#2]%
}
\DeclareDocumentCommand{\bigTheta}{m o}%
{%
    \functionTemplate{\Theta}{(}{)}{#1}[#2]%
}
\DeclareDocumentCommand{\bigOmega}{m o}%
{%
    \functionTemplate{\Omega}{(}{)}{#1}[#2]%
}
\DeclareDocumentCommand{\smallOmega}{m o}%
{%
    \functionTemplate{\upomega}{(}{)}{#1}[#2]%
}
\newcommand*{\circlePi}{\mathOrText{\uppi}}
\DeclareDocumentCommand{\eulerE}{o}%
{%
    \mathOrText{\mathrm{e}\IfNoValueTF{#1}{}{^{#1}}}%
}
\DeclareDocumentCommand{\poly}{m o}%
{%
    \functionTemplate{\mathrm{poly}}{(}{)}{#1}[#2]%
}
\createFunction{\id}{\mathrm{id}}
\NewDocumentCommand{\ind}{m o o}%
{%
    \IfNoValueTF{#2}%
    {%
        \mathOrText{\indicatorFunctionSymbol_{#1}}%
    }%
    {%
        \functionTemplate{\indicatorFunctionSymbol_{#1}}{(}{)}{#2}[#3]%
    }%
}
\DeclareDocumentCommand{\dom}{m o}%
{%
    \functionTemplate{\mathrm{dom}}{(}{)}{#1}[#2]%
}
\DeclareDocumentCommand{\rng}{m o}%
{%
    \functionTemplate{\mathrm{rng}}{(}{)}{#1}[#2]%
}
\DeclareDocumentCommand{\d}{o}%
{%
    \mathrm{d}\IfNoValueTF{#1}{}{^{#1}}%
}
\DeclareDocumentCommand{\set}{m m o}%
{%
    \mathOrText{\IfNoValueTF{#3}{\left}{#3}\{#1\ \IfNoValueTF{#3}{\left}{#3}\vert\ \vphantom{#1}#2\IfNoValueTF{#3}{\right.}{}\IfNoValueTF{#3}{\right}{#3}\}}%
}
\newtheorem{condition}[theorem]{Condition}
\newtheorem{observation}[theorem]{Observation}
\crefname{prop}{proposition}{propositions}
\newcommand*{\namedAppendix}[1]{Appendix: #1}
\newcommand*{\proofOf}[1]{Proof of #1}
\newcommand\numberthis{\addtocounter{equation}{1}\tag{\theequation}}
\newcommand*{\polymer}{\mathOrText{\gamma}}
\newcommand*{\polymerModel}{\mathOrText{\mathcal{P}}}
\newcommand*{\polys}{\mathOrText{\mathcal{C}}}
\newcommand*{\polyGraph}{\mathOrText{(\polys,\ncomp)}}
\newcommand*{\restrictedPolys}{\mathOrText{\mathcal{B}}}
\DeclareMathOperator*{\ncomp}{\not\sim}
\newcommand*{\gibbs}{\mathOrText{\mu}}
\newcommand*{\prt}{\mathOrText{Z}}
\newcommand*{\polyfam}{\mathOrText{\Gamma}}
\newcommand*{\clique}{\mathOrText{\Lambda}}
\newcommand*{\numberOfCliques}{\mathOrText{m}}
\newcommand*{\transitionProbabilitiesName}{\mathOrText{P}}
\newcommand*{\powerset}[1]{\mathOrText{2^{#1}}}
\newcommand*{\dtv}[2]{\mathOrText{d_{\text{TV}}\big(#1, #2\big)}}
\newcommand*{\size}[1]{\mathOrText{\left| #1 \right|}}
\newcommand*{\absolute}[1]{\mathOrText{\left| #1 \right|}}
\newcommand*{\err}{\mathOrText{\varepsilon}}
\newcommand*{\mix}[2]{\mathOrText{\tau_{#1} \left( #2 \right)}}
\newcommand*{\mixFunction}[1]{\mathOrText{\tau_{#1}}}
\newcommand*{\pmc}{clique dynamics condition\xspace}
\newcommand*{\ctc}{clique truncation condition\xspace}
\newcommand*{\fpc}{Fern\'{a}ndez--Procacci condition\xspace}
\DeclareDocumentCommand{\weight}{o}%
{%
    \mathOrText{w\IfNoValueTF{#1}{}{_{#1}}}%
}
\DeclareDocumentCommand{\polyfams}{o o}%
{%
    \mathOrText{\mathcal{F}\IfNoValueF{#1}%
        {%
            \IfNoValueF{#2}{_{\vert#2}}\IfEmptyTF{#1}{}{^{(#1)}}%
        }%
    }%
}
\DeclareDocumentCommand{\partitionFunction}{o o}%
{%
    \mathOrText{\prt\IfNoValueF{#1}%
        {%
            \IfNoValueF{#2}{_{\vert#2}}\IfEmptyTF{#1}{}{(#1)}%
        }%
    }%
}
\DeclareDocumentCommand{\gibbsDistribution}{o o}%
{%
\mathOrText{\gibbs\IfNoValueF{#1}%
    {%
        \IfNoValueF{#2}{_{\vert#2}}\IfEmptyTF{#1}{}{^{(#1)}}%
    }%
}%
}
\DeclareDocumentCommand{\gibbsDistributionFunction}{m o o}%
{%
    \IfNoValueTF{#2}%
    {%
        \gibbsDistribution%
    }%
    {%
        \IfNoValueTF{#3}{\gibbsDistribution[#2]}{\gibbsDistribution[#2][#3]}%
    }%
    \left(#1\right)%
}
\DeclareDocumentCommand{\polymerClique}{o}%
{%
    \mathOrText{\clique\IfNoValueTF{#1}{}{_{#1}}}%
}
\DeclareDocumentCommand{\markov}{o}%
{%
    \mathOrText{\mathcal{M}\IfNoValueTF{#1}{}{\left(#1\right)}}%
}
\DeclareDocumentCommand{\transitionProbabilities}{m m}%
{%
    \mathOrText{\transitionProbabilitiesName\left(#1, #2\right)}%
}
\newcommand*{\Uclique}[1]{\mathOrText{K_{#1}}}
\DeclareDocumentCommand{\prtFrac}{o}%
{%
    \mathOrText{\sigma\IfNoValueTF{#1}{}{_{#1}}}%
}
\DeclareDocumentCommand{\appxPrtFrac}{o}%
{%
    \mathOrText{\widehat{\sigma\IfNoValueTF{#1}{}{_{#1}}}}%
}
\newcommand*{\numSamples}{\mathOrText{s}}
\DeclareDocumentCommand{\particles}{o}%
{%
    \mathOrText{Q\IfNoValueTF{#1}{}{^{(#1)}}}%
}
\DeclareDocumentCommand{\potential}{o o}%
{%
\mathOrText{p\IfNoValueF{#1}%
    {%
        \IfNoValueF{#2}{^{(#2)}}\IfEmptyTF{#1}{}{_{#1}}%
    }%
}%
}
\DeclareDocumentCommand{\radius}{o o}%
{%
\mathOrText{r\IfNoValueF{#1}%
    {%
        \IfNoValueF{#2}{^{(#2)}}\IfEmptyTF{#1}{}{_{#1}}%
    }%
}%
}
\DeclareDocumentCommand{\grid}{o}%
{%
    \mathOrText{G\IfNoValueTF{#1}{}{^{(#1)}}}%
}
\DeclareDocumentCommand{\subgrid}{o}%
{%
    \mathOrText{H\IfNoValueTF{#1}{}{_{#1}}}%
}
\DeclareDocumentCommand{\hardSpherePrt}{o o}%
{%
    \mathOrText{\prt\IfNoValueF{#1}%
        {%
            \IfNoValueF{#2}{_{#2}}\IfEmptyTF{#1}{}{\left(#1\right)}%
        }%
    }%
} 
\DeclareDocumentCommand{\valid}{o o}%
{%
    \mathOrText{D\IfNoValueF{#1}%
        {%
            \IfEmptyTF{#1}{}{_{#1}}\IfNoValueF{#2}{\left(#2\right)}%
        }%
    }%
}
\DeclareDocumentCommand{\lebesgue}{m o}%
{%
    \mathOrText{\nu^{#1} \IfNoValueTF{#2}{}{\left(#2\right)}}%
}
\DeclareDocumentCommand{\rescale}{o o}%
{%
    \mathOrText{\varphi\IfNoValueF{#1}%
        {%
            \IfEmptyTF{#1}{}{^{(#1)}}\IfNoValueF{#2}{\left(#2\right)}%
        }%
    }%
} 
\DeclareDocumentCommand{\remap}{o o}%
{%
    \mathOrText{\Phi\IfNoValueF{#1}%
        {%
            \IfEmptyTF{#1}{}{^{(#1)}}\IfNoValueF{#2}{\left(#2\right)}%
        }%
    }%
}
\DeclareDocumentCommand{\integerSphere}{m o}%
{%
    \mathOrText{b_{#1} \IfNoValueTF{#2}{}{\left(#2\right)}}%
}
\newcommand*{\indicator}[1]{\mathOrText{\indicatorFunctionSymbol{\left\{#1\right\}}}}
\newcommand*{\fugacity}{\mathOrText{\lambda}}
\newcommand*{\leftPart}{\mathOrText{\text{L}}}
\newcommand*{\rightPart}{\mathOrText{\text{R}}}
\newcommand*{\partition}[1]{\mathOrText{V_{#1}}}
\DeclareDocumentCommand{\neighbors}{m o}%
{%
    \mathOrText{N\IfNoValueTF{#2}{}{_{#2}}}\left( #1 \right)%
}
\newcommand*{\polymerModelPart}[1]{\mathOrText{\polymerModel^{\left(#1\right)}}} 
\newcommand*{\polysPart}[1]{\mathOrText{\polys^{\left(#1\right)}}}
\DeclareDocumentCommand{\weightPart}{m o}%
{%
    \mathOrText{\IfNoValueTF{#2}{\weight^{\left(#1\right)}}{\weight[#2]^{\left(#1\right)}}}%
}
\newcommand*{\polyVertex}[1]{\mathOrText{\overline{#1}}}
\DeclareDocumentCommand{\partGibbs}{m o}%
{%
    \mathOrText{\IfNoValueTF{#2}{\gibbsDistribution[#1]}{\gibbsDistribution[#1][#2]}}%
}
\DeclareDocumentCommand{\partPrt}{m o}%
{%
    \mathOrText{\IfNoValueTF{#2}{\partitionFunction^{(#1)}}{\partitionFunction^{(#1)}[][#2]}}%
}
\DeclareDocumentCommand{\hcPartitionFunction}{m o}%
{%
    \mathOrText{\IfNoValueTF{#2}{\partitionFunction \left( #1 \right)}{\partitionFunction \left( #2, #1 \right)}}%
}
\newcommand*{\Degree}{\mathOrText{\Delta}}
\DeclareDocumentCommand{\grid}{o}%
{%
    \mathOrText{G\IfNoValueTF{#1}{}{^{(#1)}}}%
}
\DeclareDocumentCommand{\transform}{o}%
{%
    \mathOrText{\overline{\ln}\IfNoValueTF{#1}{}{\left( #1 \right)}}%
}
\DeclareDocumentCommand{\filtration}{o}%
{%
    \mathOrText{\mathcal{F}\IfNoValueTF{#1}{}{_{#1}}}%
}
\title{Polymer Dynamics via Cliques: New Conditions for Approximations} %TODO Please add
\titlerunning{Polymer Dynamics via Cliques} %TODO optional, please use if title is longer than one line
\author{Tobias Friedrich}{Hasso Plattner Institute, University of Potsdam, Potsdam, Germany}{tobias.friedrich@hpi.de}{https://orcid.org/0000-0003-0076-6308}{} %TODO mandatory, please use full name; only 1 author per \author macro; first two parameters are mandatory, other parameters can be empty. Please provide at least the name of the affiliation and the country. The full address is optional
\author{Andreas Göbel}{Hasso Plattner Institute, University of Potsdam, Potsdam, Germany}{andreas.goebel@hpi.de}{}{}
\author{Martin~S. Krejca}{Hasso Plattner Institute, University of Potsdam, Potsdam, Germany}{martin.krejca@hpi.de}{https://orcid.org/0000-0002-1765-1219}{}
\author{Marcus Pappik}{Hasso Plattner Institute, University of Potsdam, Potsdam, Germany}{marcus.pappik@hpi.de}{}{}
\authorrunning{T. Friedrich and A. Göbel and M.~S. Krejca and M. Pappik} %TODO mandatory. First: Use abbreviated first/middle names. Second (only in severe cases): Use first author plus 'et al.'
\keywords{Markov chain, partition function, Gibbs distribution, approximate counting, abstract polymer model} %TODO mandatory; please add comma-separated list of keywords
\begin{document}

\maketitle
\begin{abstract}
Abstract polymer models are systems of weighted objects, called polymers, equipped with an incompatibility relation. An important quantity associated with such models is the partition function, which is the weighted sum over all sets of compatible polymers. Various approximation problems reduce to approximating the partition function of a polymer model. Central to the existence of such approximation algorithms are weight conditions of the respective polymer model. Such conditions are derived either via complex analysis or via probabilistic arguments. We follow the latter path and establish a new condition---the \pmc---, which is less restrictive than the ones in the literature. We introduce a new Markov chain where the \pmc implies rapid mixing by utilizing cliques of incompatible polymers that naturally arise from the translation of algorithmic problems into polymer models. This leads to improved parameter ranges for several approximation algorithms, such as a factor of at least $2^{1/\alpha}$ for the hard-core model on bipartite $\alpha$-expanders.
\end{abstract}
\newpage

\section{Introduction} \label{sec:intro}

Statistical physics models systems of interacting particles as probability distributions. This approach explains a variety of real-world phenomena, including ferromagnetism~\cite{Ising}, segregation~\cite{SchellingBook}, and real-world network generation~\cite{StatPhysNetworks}. A characteristic of such systems is that they undergo phase transitions depending on some external parameter. Such phase transitions have been recently linked with 
the tractability of computational tasks and have lead to a two-way exchange: tools from statistical physics are used to explain computational phenomena, and tools from computer science are used to explain physical phenomena. An established technique  for investigating phase transitions in statistical physics that involves translating the states of a spin system as perturbations from a ground state~\cite[Chapter 7]{friedli_velenik_2017} has been recently introduced to computer science as an algorithmic tool for computational tasks of spin systems~\cite{HPR19}.

To motivate the definition of the central mathematical object of this article, we give a high-level description of how to model a spin system in terms of perturbations from a ground state.
Assume we study a $q$-state spin system on a graph $G$. The states of the spin system are usually mappings $\sigma\colon V(G)\rightarrow Q$ from the vertices of~$G$ to some finite set~$Q$. Each such configuration~$\sigma$ has a weight $w(\sigma)\in \R_{\geq 0}$ and the sum of the weights of all the configurations $Z=\sum_\sigma(w(\sigma))$ is called the \emph{partition function.} For each configuration~$\sigma$, the probability distribution that characterizes our system yields $\mu(\sigma)=w(\sigma)/Z$. Let $\sigma_0$ be the ground state we use in this translation. Given a configuration $\sigma$, we identify the set of vertices~$D\subseteq V(G)$ where, for each $v\in D$, we have $\sigma_0(v)\neq\sigma(v)$. Observe that we can uniquely identify this configuration by a set $\Gamma$ whose elements $\gamma$ consist of a connected component of $G[D]$ together with the restriction of $\sigma$ on this component.
% Assume that $\overline{\gamma_1},\dots,\overline{\gamma_k}$ are the connected components of the subgraph $G[D]$, induced by $D$\todo{Reduce formalization here}. The set of pairs $\Gamma=\{(\overline{\gamma_i},\sigma_{|\overline{\gamma_i}})\}_{i\in[k]}$ uniquely defines the configuration $\sigma$. 
Furthermore, we assign a weight $w_\gamma$ to each $\gamma\in\Gamma$, such that $\prod_{\gamma\in\Gamma}w_\gamma=w(\sigma)/w(\sigma_0)$. Thus, provided that all such sets of pairs $\Gamma$ contain no two pairs~$\gamma$, $\gamma'$ that are incompatible, i.e. $\Gamma$ cannot be uniquely decoded to an assignment because for example~$\gamma$ and~$\gamma'$ map the same vertex to a different element in~$Q$, there is a bijection between the configurations~$\sigma$ and the sets $\Gamma$. Furthermore, the distribution $\mu$ is expressed as a distribution over the sets~$\Gamma$, since it retains the property that the probability of $\Gamma$ is proportional to its weight. Such a construction suggests the following definition.

A \emph{polymer model} $\polymerModel=(\polys, \weight, \ncomp)$ is a tuple consisting of a non-empty, countable set~\polys, a set $\weight = \{\weight[\polymer]\}_{\polymer \in \polys}$ of positive real weights and a reflexive and symmetric relation $\ncomp \subseteq \polys^2$. The elements $\polymer \in \polys$ are called \emph{polymers}. The relation $\ncomp$ is called the \emph{incompatibility} relation and, for $\polymer, \polymer' \in \polys$, we say that \polymer and~$\polymer'$ are \emph{incompatible} if $\polymer \ncomp \polymer'$, and that they are \emph{compatible} otherwise.
% Throughout this paper, we use~$\ncomp$ to denote incompatibility among polymers even if we do not explicitly mention a polymer model.
We call a \emph{finite} subset $\polyfam \subseteq \polys$ a \emph{polymer family} if and only if all polymers of~\polyfam are pairwise compatible. Given a polymer model~\polymerModel, let~\polyfams[\polymerModel] denote the set of \emph{all} polymer families of~\polymerModel. Note that~\polyfams[\polymerModel] is countable. 
% If~\polymerModel is clear from context, we may drop the index and write~\polyfams instead.
The \emph{partition function} of~\polymerModel is defined as
\begin{equation}
    \label{eq:partitionFunction}
    \partitionFunction[\polymerModel] = \sum\nolimits_{\polyfam \in \polyfams[\polymerModel]} \prod\nolimits_{\polymer \in \polyfam} \weight[\gamma] ,
\end{equation}
which we require to be finite.
Further, the \emph{Gibbs distribution} of~\polymerModel is the probability distribution~$\gibbsDistribution[\polymerModel]$ over~\polyfams[\polymerModel] such that, for all $\polyfam \in \polyfams[\polymerModel]$,
\begin{equation}
    \label{eq:gibbsDistribution}
    \gibbsDistributionFunction{\polyfam}[\polymerModel] = \frac{\prod_{\polymer \in \polyfam} \weight[\polymer]}{\partitionFunction[\polymerModel]} .
\end{equation}

A helpful interpretation for understanding the definition of a polymer model is the following.
Ignoring the reflexivity of $\ncomp$, we view the pair~$\polyGraph$ as a graph, which we call the \emph{polymer graph}. We observe that the families of $\polyfams[\polymerModel]$ correspond to the independent sets of $\polyGraph$. Thus, for the special case where $\weight[\polymer]=\lambda\in\R$, for each $\gamma\in\polys$, the distribution $\gibbsDistribution$ is the hard-core model~\cite{2006:Weitz:counting_independent_sets} on the polymer graph and $\partitionFunction[\polymerModel]$ is the independence polynomial~\cite{2017:Peters:conjecture_of_Sokal}. 

We aim to maximize the parameter range for which the following two computational tasks can be done in polynomial time.
\begin{enumerate}[(1)]
    \item Approximately sampling from the Gibbs distribution of a polymer model, i.e., return a random family~$\Gamma$ from a distribution with total-variation distance of at most~$\varepsilon$ from $\gibbsDistribution[\polymerModel]$.
    \item Returning an estimate $\widetilde Z$, such that $(1-\varepsilon)\partitionFunction[\polymerModel]\leq\widetilde Z\leq(1+\varepsilon)\partitionFunction[\polymerModel]$.
\end{enumerate}

\subsection{Known Algorithmic Results}\label{sec:related_work}

There is an expanding list of results that utilize abstract polymer models to obtain efficient approximation and sampling algorithms for new parameter regimes for various spin systems on graphs. This line of research was initiated by Helmuth et~al.~\cite{HPR19}, who used polymers to obtain polynomial-time approximation and sampling algorithms at a regime where the weight of the interactions of particles with an external field is low. For problems that are hard to approximate on general inputs, polynomial-time approximation algorithms are derived by utilizing restrictions upon the input graph of a spin system. Examples include spin systems on expander graphs~\cite{GGS20,JKP19,LLLM19}, the hard-core model on unbalanced bipartite graphs~\cite{CP20}, and the ferromagnetic Potts model on $d$-dimensional lattices~\cite{BCHPT20}. Polymer models have also been used to approximate and sample edge spin systems (holant problems) at low temperatures~\cite{CFFGL19}.

Translating a spin system on a graph $G$ with $n$ vertices into an abstract polymer model commonly results in an exponential number of polymers in terms of~$n$---as can be observed in our initial example.
%earlier discussion of such a translation.
However, we are interested in approximation and sampling algorithms with a runtime polynomial in $n$.
There are two main approaches for such algorithms.

\paragraph*{(i) Cluster Expansion} This approach considers complex weights for the polymers and is based on an infinite series expansion of $\ln \partitionFunction$ (the \emph{cluster expansion}).
% The terms of the cluster expansion series correspond to connected subgraphs of $\polyGraph$. 
The essential element for polynomial-time computation is a theorem of Koteck{\'y} and Preiss~\cite[Theorem~1]{1986:Kotecky:cluster_expansion_polymer_models}, a condition for establishing absolute convergence of the cluster expansion. By satisfying this condition, the cluster expansion is truncated to its most significant terms, obtaining an $\varepsilon$-additive approximation for $\ln \partitionFunction$. Computing the significant terms is achieved by enumerating connected induced subgraphs of the polymer graph of size up to $\log|\polys|$. By an algorithm of Patel and Regts, the enumeration takes polynomial time in terms of the input graph of the spin system~\cite{PR17}. The $\varepsilon$-additive approximation of $\ln \partitionFunction$ immediately gives a multiplicative $\varepsilon$-approximation for~$\partitionFunction$. The runtime of this approach is commonly $n^{\bigO{\log\Delta}}$, where $\Delta$ is the maximum degree of the input graph $G$ for the spin system and $n=|V(G)|$.
% and shows that the rate of convergence of the series is exponential\todo{is it clear what I mean here} in the size of connected subgraphs of $\polyGraph$. 
Approximating~$\partitionFunction$ together with the self-reducibility of the polymer model yields a sampling algorithm for $\gibbsDistribution[\polymerModel]$~\cite{HPR19}. 

\paragraph*{(ii) Markov Chain Monte Carlo}
Initiated by Chen et~al.~\cite{CGGPSV19}, this approach defines a Markov chain with state space~$\polyfams[\polymerModel]$ and stationary distribution~$\gibbsDistribution[\polymerModel]$. The Markov chain requires the polymer model to have originated from a spin system on a graph $G$ with $n$ vertices. Iteratively, the chain samples a polymer~$\gamma$ with probability proportional to its weight~$\weight[\gamma]$ and then adds or removes $\gamma$ from its state if possible. When the \emph{mixing condition}~\cite[Definition~1]{CGGPSV19} is satisfied, the Markov chain converges to $\gibbsDistribution[\polymerModel]$ after $\bigO{n\log n}$ iterations. The mixing condition matches a convergence condition arising from an analysis by Fernández et~al.~\cite{FFG01} of another stochastic process of polymers on lattices. An $\varepsilon$-approximate sampler for~$\gibbsDistribution[\polymerModel]$ is obtained by simulating the Markov chain. The computational challenge for this approach is to sample the polymer~$\gamma$ in order to perform a transition of the Markov chain.  As Chen et~al.~\cite{CGGPSV19} show, this can be done in expected constant time provided the \emph{sampling condition}~\cite[Definition~4]{CGGPSV19} is satisfied. This results in an $\bigO{n\log n}$ algorithm for sampling from the Gibbs distribution of a spin system. Using Simulated Annealing, Chen et al. convert this sampler into a randomized approximation scheme (FPRAS) for $\partitionFunction$ that runs in expected $\bigO{n^2\log n}$ time.

\paragraph*{Comparison of Established Conditions for the Methods Above}
A number of conditions for the convergence of the cluster expansion have appeared in the literature~\cite{Dob96,FP07,1986:Kotecky:cluster_expansion_polymer_models}. The condition of Fernández and Procacci~\cite{FP07} is the least restrictive among them, i.e., the other conditions imply it. Thus, using the Fern\'andez--Procacci condition, one could potentially obtain approximation algorithms for broader parameter ranges than the ones obtained by using, e.g., the Koteck{\'y}--Preiss condition~\cite{1986:Kotecky:cluster_expansion_polymer_models}.
However, the condition by Kotecký and Preiss is convenient to apply in polymer models of vertex spin systems and comes with implications on the rate of convergence of the cluster expansion used in algorithmic settings. When compared to cluster expansion conditions (restricted to non-negative real weights), the mixing condition of Chen et~al.~\cite{CGGPSV19} is less restrictive than the Koteck{\'y}--Preiss condition, however, it is incomparable with the Fernández--Procacci condition. Note that the fast run-times of Chen et al.~\cite{CGGPSV19} are dependent on the sampling condition, which imposes the largest restriction on the parameter range of the applications it is used for.

%is the most restrictive of the aforementioned conditions. but in turn results in the fastest run-time for its applications.
% We further note that to our knowledge none of these conditions are known to be tight.
%  and the only tight conditions known for polymer models are restricted to the special case of the Hard-core model.

\subsection{Our Results}

We study a new Markov chain~$(X_t)_{t\in\N}$ for abstract polymer models with stationary distribution~$\gibbsDistribution[\polymerModel]$.
The dynamics of our Markov chain are based on a \emph{clique cover,} i.e., a set $\polymerClique = \{\polymerClique[i]\}_{i \in [\numberOfCliques]}$ with $\bigcup \polymerClique = \polys$ such that the polymers in each clique~$\polymerClique[i]$ are pairwise incompatible. Note that families of compatible polymers contain at most one polymer per clique. At each step, our Markov chain chooses $i \in [\numberOfCliques]$  uniformly at random and samples a family in $\polymerClique[i]$ according to the distribution $\gibbsDistribution[][\clique_i]$ defined as follows. For~$\gamma\in\polymerClique[i]$, we have $\gibbsDistributionFunction{\{\gamma\}}[][\clique_i] = \weight[\gamma]/\partitionFunction[][\polymerClique[i]]$ and, for the empty set, $\gibbsDistributionFunction{\emptyset}[][\clique_i] = 1/\partitionFunction[][\polymerClique[i]]$, where $\partitionFunction[][\polymerClique[i]]=1 + \sum_{\polymer \in \clique_i} \weight[\polymer]$. If the empty family is chosen and $X_t$ contains a polymer from $\polymerClique[i]$, then the chain removes this polymer. If the family chosen contains a polymer, then, if possible, the chain adds this polymer to its state. For a detailed description of our chain, please refer to \Cref{def:markov_chain}.

Our chain can be viewed as a natural generalization of the (spin) Glauber dynamics (cf.~\cite[insert/delete chain]{DyerG00}). Any abstract polymer model has a trivial clique cover, where each clique contains exactly one polymer. From the choice of sampling from $\gibbsDistribution[][\clique_i]$ for each clique chosen at each iteration, the clique dynamics with the trivial clique cover coincides with the Glauber dynamics.
Clique covers with a much smaller number of cliques than the number of vertices arise naturally from the translation of spin systems into polymer models. For example, the translation we discussed earlier in the introduction yields a clique cover with~$n$ cliques, one for each vertex in the original graph $G$. Since the number of polymers is commonly exponential in the size of $G$, our chain utilizes that a family of compatible polymers may contain at most one polymer from each polymer clique. The chain of Chen et al.~\cite{CGGPSV19} also utilizes this fact, however, in a more restricted setting and with a different sampling distribution for each vertex clique. 

Central to our mixing time analysis for this chain is the following condition.
\begin{condition}[Clique Dynamics]
    \label{def:pmc}
    Let $\polymerModel = (\polys, \weight, \ncomp)$ be a polymer model, and let $f\colon \polys \to \R_{>0}$. We say that~\polymerModel \emph{satisfies the \pmc{} with~$f$} if and only if, for all $\polymer \in \polys$,
    \[
    \sum\nolimits_{\substack{\polymer' \in \polys\colon \polymer' \ncomp \polymer, \\ \polymer' \neq \polymer}} \Big(f(\polymer') \frac{\weight[\polymer']}{1 + \weight[\polymer']}\Big) \le f(\gamma)\ .
    \]
\end{condition}

We show that when the \pmc is satisfied, the mixing time of our Markov chain is polynomial in the number of cliques in the clique cover and logarithmic in~$f$ (\Cref{thm:markov_chain}).
%Note that our condition does not exclude polymer models with $\weight[\polymer]\geq 1$ for some polymer $\gamma$.
When restricted to the setting of Chen et~al.~\cite{CGGPSV19}, the \pmc is implied by the mixing condition and thus less restrictive. The function~$f$ in our condition makes it easily comparable with the conditions for cluster expansion. Unlike the Koteck{\'y}--Preiss condition~\cite{1986:Kotecky:cluster_expansion_polymer_models}, the \pmc allows for a more flexible choice of $f$ in order to obtain a greater parameter range for the algorithmic applications (cf. \cite[Remark 2.4]{CFFGL19}).

We further show that the \pmc{} is more general than the Fern\'andez--Procacci~\cite{FP07} condition for the cluster expansion---and consequently more general than the Koteck{\'y}--Preiss condition~\cite{1986:Kotecky:cluster_expansion_polymer_models} (see \Cref{sec:comparison}). An implication of our analysis is that cluster expansion conditions imply our condition for the mixing time of clique dynamics. To the best of our knowledge, this is the first connection between cluster expansion and mixing times of Markov chains for polymer models and might be of independent interest to the statistical-physics community. This is in line with the special case of the hard-core model, where the parameter range on the reals has a bigger radius than on the complex plane~\cite{2017:Peters:conjecture_of_Sokal}.

The \pmc allows us to prove the following theorem for approximately sampling from $\gibbsDistribution$, which is our main algorithmic result.
%uses our Markov chain and the bound of its mixing time to approximately sample from $\gibbsDistribution$.
%{
%    \def\thetheorem{\ref{thm:sampling}}
%    \begin{theorem}
%    %  	\label{thm:sampling}
%        Let $\polymerModel = (\polys, w, \ncomp)$ be a computationally feasible polymer model, let~\polymerClique be a polymer clique cover of~\polymerModel with size~\numberOfCliques, and let $\prt_{\max} = \max_{i \in [\numberOfCliques]} \{\partitionFunction[][\clique_i]\}$.
%        Further, assume that
%        \begin{enumerate}[(a)]
%            \item %\label[assume]{thm:sampling:prt}
%                $\prt_{\max} \in \poly{\numberOfCliques}$,
%            \item %\label[assume]{thm:sampling:pmc}
%                $\polymerModel$ satisfies the \pmc for a function $f$ such that, for all $\gamma \in \polys$, it holds that $\eulerE^{-\poly{\numberOfCliques}} \le f(\gamma) \le \eulerE^{\poly{\numberOfCliques}}$, and that,
%            \item %\label[assume]{thm:sampling:inner}
%                for all $i \in [\numberOfCliques]$, we can sample from $\gibbsDistribution[][\clique_i]$ in time $\poly{\numberOfCliques}$.
%        \end{enumerate}    
%        Then, for all $\err \in (0, 1]$, we can $\err$-approximately sample from~$\gibbs$ in time $\poly{\numberOfCliques/\err}$.
%    \end{theorem}
%    \addtocounter{theorem}{-1}
%}

\begin{restatable}{theorem}{samplingTheorem}
    \label{thm:sampling}
    Let $\polymerModel = (\polys, w, \ncomp)$ be a computationally feasible polymer model, let~\polymerClique be a polymer clique cover of~\polymerModel with size~\numberOfCliques, and let $\prt_{\max} = \max_{i \in [\numberOfCliques]} \{\partitionFunction[][\clique_i]\}$.
    Further, assume that
    \begin{enumerate}[(a)]
        \item \label[assume]{thm:sampling:pmc}
        $\polymerModel$ satisfies the \pmc for a function $f$ such that, for all $\gamma \in \polys$, it holds that $\eulerE^{-\poly{\numberOfCliques}} \le f(\gamma) \le \eulerE^{\poly{\numberOfCliques}}$,
        \item \label[assume]{thm:sampling:prt}
        $\prt_{\max} \in \poly{\numberOfCliques}$, and that,
        \item \label[assume]{thm:sampling:inner}
        for all $i \in [\numberOfCliques]$, we can sample from $\gibbsDistribution[][\clique_i]$ in time $\poly{\numberOfCliques}$.
    \end{enumerate}
    
    Then, for all $\err \in (0, 1]$, we can $\err$-approximately sample from~$\gibbs$ in time $\poly{\numberOfCliques/\err}$.
\end{restatable}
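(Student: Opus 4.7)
My plan is to implement the clique dynamics Markov chain $(X_t)_{t \in \N}$ from \Cref{def:markov_chain}, started at $X_0 = \emptyset$, and to run it for a number of steps $T$ such that $X_T$ is an $\err$-approximate sample from $\gibbsDistribution$. Correctness follows at once from the mixing-time bound for this chain; the rest is verifying that each step can be executed in polynomial time and that the total step count is polynomial.

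First, I would invoke the mixing time result for clique dynamics (\Cref{thm:markov_chain}). By \cref{thm:sampling:pmc}, $\polymerModel$ satisfies the \pmc{} with $f$, so that theorem provides a bound of the form $\mix{X}{\err} \le \poly{\numberOfCliques} \cdot \left( \ln(1/\err) + \ln \max_{\gamma \in \polys} (1/f(\gamma)) + \ln \prt_{\max} \right)$ (or a similar polynomial combination consistent with the claim that the mixing time is polynomial in $\numberOfCliques$ and logarithmic in $f$). By \cref{thm:sampling:pmc} we have $\lvert \ln f(\gamma) \rvert \le \poly{\numberOfCliques}$ uniformly in $\gamma \in \polys$, and by \cref{thm:sampling:prt} we have $\ln \prt_{\max} \in \poly{\numberOfCliques}$. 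Substituting these bounds yields $\mix{X}{\err} \in \poly{\numberOfCliques / \err}$, which fixes the step count $T$.

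Second, I would verify that a single transition of the chain can be simulated in $\poly{\numberOfCliques}$ time. A transition consists of (i) drawing a uniform index $i \in [\numberOfCliques]$, (ii) sampling a family from $\gibbsDistribution[][\polymerClique[i]]$, and (iii) updating $X_t$ according to the rules of \Cref{def:markov_chain}. Step (i) is trivially polynomial, step (ii) is polynomial by \cref{thm:sampling:inner}, and step (iii) reduces to a compatibility check between the proposed polymer and the current state. Since every compatible family contains at most one polymer per clique, we have $|X_t| \le \numberOfCliques$ throughout the execution, and each pairwise compatibility query is polynomial by computational feasibility of $\polymerModel$. Thus each step costs $\poly{\numberOfCliques}$.

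Multiplying the mixing time by the per-step cost yields an overall running time of $\poly{\numberOfCliques / \err}$, and by definition of mixing time the output $X_T$ satisfies $\dtv{X_T}{\gibbsDistribution} \le \err$. The genuinely hard step is the mixing-time bound packaged in \Cref{thm:markov_chain}; conditional on it, the proof reduces to the bookkeeping above, where the only non-trivial point is ensuring that step (iii) remains polynomial under the computational feasibility assumption and that the polynomial dependencies on $\ln f$, $\ln \prt_{\max}$, and $\ln(1/\err)$ combine to $\poly{\numberOfCliques / \err}$.
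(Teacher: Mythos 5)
Your proposal is correct and follows essentially the same route as the paper: invoke the mixing-time bound of \Cref{thm:markov_chain}, substitute the bounds from \cref{thm:sampling:pmc,thm:sampling:prt} to get $\poly{\numberOfCliques/\err}$ steps, and verify each step costs $\poly{\numberOfCliques}$ using \cref{thm:sampling:inner} and computational feasibility. One small elision: your step (iii) only discusses the add-polymer branch; the remove-polymer branch $X_{t+1} = X_t \setminus \clique_i$ additionally requires, for each $\gamma \in X_t$, deciding whether $\gamma \in \clique_i$, which is covered by operation~(\ref{assume:sampling:clique}) of computational feasibility rather than the incompatibility oracle.
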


Additionally, as we discuss in \Cref{sec:approx_prt}, we use self-reducibility on the clique cover as well as \Cref{thm:sampling} to obtain an $\varepsilon$-approximation algorithm for the partition function $\partitionFunction$.

\begin{restatable}{theorem}{approximationPartitionFunction}
    \label{thm:appx_partition_function}
    Let $\polymerModel = (\polys, w, \ncomp)$ be a computationally feasible polymer model, let~\polymerClique be a polymer clique cover of~\polymerModel with size~\numberOfCliques.
    Assume that~$\polymerModel$ satisfies the conditions of \Cref{thm:sampling}.
    For all $\err \in (0, 1]$, there is a randomized $\err$-approximation of~\partitionFunction computable in time $\poly{\numberOfCliques/\err}$.
\end{restatable}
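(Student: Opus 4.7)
The plan is to reduce approximation of $\partitionFunction[\polymerModel]$ to a collection of approximate sampling calls via a standard self-reducibility telescoping along the clique cover. Order the cliques arbitrarily as $\polymerClique[1], \ldots, \polymerClique[\numberOfCliques]$ and, for $i \in \{0, 1, \ldots, \numberOfCliques\}$, let $\mathcal{C}_i \defeq \bigcup_{j \le i} \polymerClique[j]$, $\mathcal{C}'_i \defeq \mathcal{C}_i \setminus \mathcal{C}_{i-1}$, and let $\mathcal{P}_i$ denote the sub-polymer model of $\polymerModel$ restricted to $\mathcal{C}_i$, inheriting weights and the incompatibility relation. Then $\mathcal{P}_0$ has partition function~$1$, $\mathcal{P}_\numberOfCliques = \polymerModel$, and telescoping yields
\[
    \partitionFunction[\polymerModel] = \prod_{i=1}^{\numberOfCliques} \frac{1}{\prtFrac[i]} , \quad \text{where } \prtFrac[i] \defeq \frac{\partitionFunction[\mathcal{P}_{i-1}]}{\partitionFunction[\mathcal{P}_i]} .
\]

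The central observation is that each $\prtFrac[i]$ admits a clean probabilistic expression: a family $\polyfam \in \polyfams[\mathcal{P}_i]$ satisfies $\polyfam \cap \mathcal{C}'_i = \emptyset$ if and only if $\polyfam \in \polyfams[\mathcal{P}_{i-1}]$, so $\prtFrac[i]$ equals the probability that a sample from $\gibbsDistribution[\mathcal{P}_i]$ avoids $\mathcal{C}'_i$. Since $\mathcal{C}'_i \subseteq \polymerClique[i]$ and any compatible family contains at most one polymer from the clique $\polymerClique[i]$, a short bookkeeping gives $\partitionFunction[\mathcal{P}_i] \le \partitionFunction[\mathcal{P}_{i-1}] \cdot \partitionFunction[][\polymerClique[i]]$, and hence $\prtFrac[i] \ge 1 / \prt_{\max}$. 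Combined with \Cref{thm:sampling:prt}, this gives $\prtFrac[i] \ge 1 / \poly{\numberOfCliques}$, which is the bottleneck quantity controlling how many samples are needed.

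Next, I would verify that \Cref{thm:sampling} applies to each $\mathcal{P}_i$ with the clique cover $\{\polymerClique[1], \ldots, \polymerClique[i]\}$: since the \pmc is a per-polymer inequality with non-negative summands, restricting the polymer set only shrinks the left-hand side and hence preserves the condition with the same function~$f$, while the bounds on $\prt_{\max}$ and the per-clique sampling assumption transfer directly. For an $\err' \in \poly{\err/\numberOfCliques}$ to be chosen below, this lets me draw an $\err'$-approximate sample from $\gibbsDistribution[\mathcal{P}_i]$ in time $\poly{\numberOfCliques/\err}$. I then estimate each $\prtFrac[i]$ by the empirical frequency $\appxPrtFrac[i]$ of the event $\{\polyfam \cap \mathcal{C}'_i = \emptyset\}$ over $\numSamples \in \poly{\numberOfCliques/\err}$ such samples. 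A multiplicative Chernoff bound together with the total-variation bound between the true sampling distribution and $\gibbsDistribution[\mathcal{P}_i]$ then yields $|\appxPrtFrac[i] - \prtFrac[i]| \le \prtFrac[i] \cdot \err/(2\numberOfCliques)$ with probability at least $1 - 1/(3\numberOfCliques)$, provided $\err'$ is chosen small enough that the induced bias is $\le \prtFrac[i] \cdot \err/(4\numberOfCliques)$; since $\prtFrac[i] \ge 1/\prt_{\max}$, the required $\err'$ still lies in $\poly{\err/\numberOfCliques}$.

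Finally, the algorithm returns $\prod_{i=1}^{\numberOfCliques} 1/\appxPrtFrac[i]$. A union bound over the $\numberOfCliques$ ratios, together with the elementary estimate $(1 \pm \err/(2\numberOfCliques))^{\numberOfCliques} \subseteq (1 \pm \err)$ for $\err \in (0, 1]$, shows that the output lies within a factor $(1 \pm \err)$ of $\partitionFunction[\polymerModel]$ with constant success probability, which can be boosted to any desired level by median amplification. The total runtime is $\numberOfCliques \cdot \numSamples$ calls to the sampler of \Cref{thm:sampling}, each taking $\poly{\numberOfCliques/\err}$ time, and therefore stays in $\poly{\numberOfCliques/\err}$. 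The main technical subtlety is not the telescoping itself, which is routine, but the calibration of $\err'$ and $\numSamples$ so that the total-variation error from \Cref{thm:sampling} and the Chernoff fluctuation jointly fit within the target relative error per ratio, while keeping $1/\err'$ and $\numSamples$ polynomial in $\numberOfCliques/\err$.
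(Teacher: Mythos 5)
Your proposal follows essentially the same path as the paper: the clique-based self-reducibility telescoping $\partitionFunction = \big(\prod_{i \in [\numberOfCliques]} \prtFrac[i]\big)^{-1}$ with each $\prtFrac[i]$ expressed as a Gibbs probability, lower-bounded by $1/\prt_{\max}$, and estimated from approximate samples, is exactly what \Cref{algo:appx_prt} and \Cref{lemma:appx_algo} do. The only cosmetic deviations are that you use a multiplicative Chernoff bound plus a union bound and median amplification where the paper reaches the $3/4$ success probability directly via a Chebyshev/variance argument, and that you explicitly re-verify the \pmc on the restricted sub-model $\mathcal{P}_i$ where the paper simply notes the same Markov chain can be run while ignoring the cliques $\clique_{i+1},\ldots,\clique_{\numberOfCliques}$.
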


Since it is common for spin systems on graphs with $n$ vertices to translate into polymer models with a clique cover of $n$ cliques, \Cref{thm:sampling,thm:appx_partition_function} imply polynomial-time algorithms for their respective problems. Assumption~(\ref{thm:sampling:pmc}) allows for a broad range in the choice of~$f$ from the \pmc and assumption~(\ref{thm:sampling:prt}) is commonly satisfied when we chose the parameters in order to satisfy assumption~(\ref{thm:sampling:pmc}).
However, applying \Cref{thm:sampling,thm:appx_partition_function} to spin systems previously studied in the literature, assumption~(\ref{thm:sampling:inner}) is not straightforward to satisfy, as the size of the cliques are commonly exponential in $n=|V(G)|$. 
% Chen et~al.~\cite{CGGPSV19} used the sampling condition in order to sample polymers in expected constant time.\todo{This sentence doesn't fit very well into the flow.} 
As we are interested in extending the parameter range while remaining in the realm of polynomial-time computations, we do not need to use such a restrictive condition. For this purpose, we introduce the \ctc.

\begin{condition}[Clique Truncation]
    \label{def:ctc}
    Let $\polymerModel = (\polys, w, \ncomp)$ be a polymer model, let~\polymerClique be a polymer clique cover of~\polymerModel with size~\numberOfCliques, and let $\size{\cdot}$ be a size function for~\polymerModel as in \Cref{def:size}.
    For all $i \in [\numberOfCliques]$, we say that~$\clique_i$ \emph{satisfies the \ctc{}} for a monotonically increasing, invertible function $g\colon \R \to \R_{>0}$ and a bound $B \in \R_{> 0}$ if and only if
    \[
    \sum\nolimits_{\gamma \in \clique_i} g(\size{\gamma}) w_{\gamma} \le B.
    \]
\end{condition}

We show that the \ctc{} allows to reduce the size of each clique to a polynomial in~$n$ by removing low weight polymers from the polymer model.\footnote{A similar idea was used for the hard-core model on bipartite expanders in the first arXiv version of~\cite{CGGPSV19}.} More precisely, \Cref{lemma:model_truncation} states that, for an $\varepsilon$-approximation, it is sufficient to consider only polymers~\polymer with $\size{\polymer}\leq g^{-1}(B\numberOfCliques/\varepsilon)$. This allows us to use the algorithm of Patel and Regts~\cite{PR17} to sample from the Gibbs distribution of each clique by enumerating all its polymers. In all our calculations, the parameter range restrictions imposed by the \ctc are weaker than the ones imposed by the \pmc. As illustrated in~\Cref{table:bounds}, this leads to improved parameter ranges for spin systems previously studied in literature.
% (see \Cref{appendix:expanders} for a detailed discussion on the hard-core model on bipartite $\alpha$-expanders\todo{This is not relevant anymore, as it is in the appendix?}).

\begin{table}
    \centering
    \caption{\label{table:bounds} Improvement on the parameter ranges of our technique for problems with known approximation algorithms. Note that for a fair comparison we refined the calculations of the bounds in~\cite{JKP19} in a similar fashion as in~\Cref{appendix:expanders}.}
     \begin{tabular}{p{4.2 cm}p{4.2 cm}l}
     \toprule
       \textbf{Problem} & \textbf{Previous range} & \textbf{New range}  \\
     \midrule
      {\small Hard-core model on\newline \hspace*{0.5 em}bipartite $\alpha$-expanders}  &  
      	$\lambda>(\eulerE^2 \Delta^2)^\frac{1}{\alpha}$~\cite{JKP19} & 
      	$\lambda\geq \left(\frac{\eulerE}{0.8} \Degree^2 \right)^{\frac{1}{\alpha}}$  \\
     {\small $q$-state Potts model on\newline \hspace*{0.5 em} $\alpha$-expanders}  &  
     	$\beta> \frac{9/4 + \ln(\Delta q)}{\alpha}$~\cite{JKP19} & 
     	$\beta \geq \frac{3/2 + \ln(\Delta q)}{\alpha}$  \\
    %  \hline
    %   $q$-state Spin system on bipartite $\alpha$-expanders  &  $\Delta/\lambda> \frac{100}{1-\delta}\ln(q\Delta)$ and $\Delta$ \cite{GGS20} & $\lambda\geq$  \\
      {\small Hard-core model on \newline \hspace*{0.5 em}unbalanced bipartite graphs}  &  
      	$6\Delta_{\leftPart}\Delta_{\rightPart}\lambda_{\rightPart}\leq(1+\lambda_{\leftPart})^\frac{\delta_{\rightPart}}{\Delta_{\leftPart}}$~\cite{CP20} & 
       	$3.3353 \Delta_{\leftPart}\Delta_{\rightPart}\lambda_{\rightPart}\leq(1+\lambda_{\leftPart})^\frac{\delta_{\rightPart}}{\Delta_{\leftPart}}$ \\
      {\small Perfect matching\newline \hspace*{0.5 em}polynomial}  &  
      	$z\leq \left(\sqrt{4.8572 \, (\Delta -1)}\right)^{-1}$~\cite{CFFGL19} &
      	$z\leq \left(\sqrt{2.8399 (\Delta -1)}\right)^{-1}$ \\
     \bottomrule
    \end{tabular}
\end{table} 

\subsection{Outline}

In \Cref{sec:prelim}, we establish notation and introduce the tool for bounding the mixing time of our chain. We define and analyze our Markov chain in \Cref{sec:polymerDynamics}. The algorithmic results are stated in \Cref{sec:algo}. Last, in \Cref{sec:truncation}, we show how to efficiently sample polymers from their respective cliques, which we use to improve the parameter ranges of known algorithmic bounds on spin systems.
Due to space limitations, all of our proofs are in the appendix.

\section{Preliminaries}\label{sec:prelim}
We denote the set of all natural numbers, including~$0$, by~\N and the set of all real numbers by~\R. For an $n \in \N$, let $[n] = [1, n] \cap \N$. If the polymer model~\polymerModel is clear from context, we may drop the index and write~\polyfams, \partitionFunction, and \gibbsDistribution instead of~\polyfams[\polymerModel], \partitionFunction[\polymerModel], and \gibbsDistribution[\polymerModel], respectively.

We use the following formal notion of approximate sampling.
Let $\nu$  be a probability distribution on a countable state space $\Omega$.
For $\err \in (0, 1]$, we say that a distribution $\xi$ on $\Omega$ is an \emph{$\err$-approximation of $\nu$} if and only if $\dtv{\nu}{\xi} \le \err$, where $\dtv{\cdot}{\cdot}$ denotes the total-variation distance.
Further, we say that we can \emph{$\err$-approximately sample from $\nu$} if and only if we can sample from any distribution $\xi$ such that $\xi$ is an $\err$-approximation of $\nu$. 

We are also interested in approximating the partition function of polymer models, which we define as follows.
For $x \in \R_{>0}$ and $\err \in (0, 1]$, we call a random variable $X$ a \emph{randomized $\err$-approximation for~$x$} if and only if
\[
	\Pr{(1 - \err) x \le X \le (1 + \err) x} \ge \frac{3}{4} .
\]
Note that if~$x$ is the output to an algorithmic problem on some instance and independent samples of $X$ can be obtained in polynomial time in the instance size and $1/\err$, then this translates to the definition of an FPRAS.

\subsection{Restricted Polymer Models}
\label{sec:restrictedPolymerModels}

We base the transitions of our Markov chain for a polymer model $(\polys, \weight, \ncomp)$ on restricted sets $\restrictedPolys \subseteq \polys$. We define the set of all polymer families \emph{restricted to~\restrictedPolys} to be $\polyfams[][\restrictedPolys] = \polyfams \cap \powerset{\restrictedPolys}$. Further, we define the restricted partition function~\partitionFunction[][\restrictedPolys] to be \cref{eq:partitionFunction} but with~\polyfams[\polymerModel] replaced by~\polyfams[][\restrictedPolys]. Similarly, we define the restricted Gibbs distribution~\gibbsDistribution[][\restrictedPolys] to be a probability distribution over~\polyfams[][\restrictedPolys], i.e., \cref{eq:gibbsDistribution} but with~\partitionFunction[\polymerModel] replaced by~\partitionFunction[][\restrictedPolys]. Our restrictions are special sets of polymers, which we define next.

By definition, for a polymer model, a polymer family~\polyfam cannot contain incompatible polymers. Thus, when considering a subset $\restrictedPolys \subseteq \polys$ where all polymers are pairwise incompatible, at most one polymer of~\restrictedPolys is in~\polyfam. We call such a subset~\restrictedPolys a \emph{polymer clique.}

Last, for an $\numberOfCliques \in \N_{> 0}$, we call a set $\polymerClique = \{\polymerClique[i]\}_{i \in [\numberOfCliques]}$ of polymer cliques a \emph{polymer clique cover} if and only if $\bigcup \polymerClique = \polys$, and we call~\numberOfCliques the \emph{size} of~\polymerClique. Note that the elements of~\polymerClique need not be pairwise disjoint. Further note that, for each $i \in [\numberOfCliques]$, the partition function restricted to~\polymerClique[i] boils down to
\begin{align*}
    \partitionFunction[][\polymerClique[i]] = \sum_{\polyfam \in \polyfams[][\clique_i]} \prod_{\polymer \in \polyfam} \weight[\polymer] = 1 + \sum_{\polymer \in \clique_i} \weight[\polymer] ,
\end{align*}
as the polymers of~\polymerClique[i] are pairwise incompatible and thus each family of~\polymerClique[i] (except~$\emptyset$) contains a single polymer. Similarly, the Gibbs distribution restricted to~\polymerClique[i] simplifies to $\gibbsDistributionFunction{\emptyset}[][\clique_i] = 1/\partitionFunction[][\polymerClique[i]] = 1/(1 + \sum_{\polymer \in \clique_i} \weight[\polymer])$ and, for each $\polymer' \in \clique_i$, to $\gibbsDistributionFunction{\{\polymer'\}}[][\clique_i] = \weight[\polymer']/\partitionFunction[][\polymerClique[i]] = \weight[\polymer']/(1 + \sum_{\polymer \in \clique_i} \weight[\polymer])$.

\subsection{Markov Chains}

% One way of approximation the partition function of a polymer model~\polymerModel is to run a Markov chain~\markov with state space~$\polyfams[\polymerModel]$ and with stationary distribution~$\gibbsDistribution[\polymerModel]$. Once~\markov is mixed, one can use it as a sampler in order to approximate~\partitionFunction[\polymerModel]. 
% We go into detail about this approach in \Cref{RespectiveSection}.

For a Markov chain~\markov with a unique stationary distribution~$D$ and an $\varepsilon \in (0, 1]$, let~\mix{\markov}{\varepsilon} denote the mixing time of~\markov (with error~$\varepsilon$). That is, \mix{\markov}{\varepsilon} denotes the first point in time $t \in \N$ such that, for every initial state, the total-variation distance between~$D$ and the distribution of~\markov at time~$t$ is at most~$\varepsilon$.

In order to bound the mixing time of our Markov chains, we use a theorem by \cite[Theorem~$3.3$]{GreenbergPR09IncorrectSupermartingaleDrift}. Unfortunately, the theorem is not correct in its original formulation. Therefore, we provide an alternative formulation, which we use. We give a full proof of this theorem in \Cref{appendix:coupling_lemma}, where we also discuss why the original assumptions are insufficient.

\begin{restatable}{theorem}{expPotentialTheorem}%[coupling with exponential potential]
    \label{lemma:exp_potential}
    Let~$\markov$ be an ergodic Markov chain with state space $\Omega$ and with transition matrix $P$ such that, for all $x \in \Omega$, it holds that $P(x, x) > 0$.
    For $d, D \in \R_{> 0}$, $d \leq D$, let $\delta\colon \Omega^2 \to \{0\} \cup [d, D]$ be such that $\delta(x, y) = 0$ if and only if $x=y$.
    Assume that there is a coupling between the transitions of two copies $(X_t)_{t \in \N}$ and $(Y_t)_{t \in \N}$ of~$\markov$ such that, for all $t \in \N$ and all $x, y \in \Omega$, it holds that
    \begin{align}
        \label{eq:driftCondition}
        \E{\delta(X_{t+1}, Y_{t+1})}[X_t = x, Y_t = y] \le \delta(x, y).
    \end{align}
    Furthermore, assume that there are $\kappa, \eta \in (0, 1)$ such that, for the same coupling and all $t \in \N$ and all $x, y \in \Omega$ with $x \neq y$, it holds that
    \begin{align}
        \label{eq:largeJumps}
        \Pr{|\delta(X_{t+1}, Y_{t+1}) - \delta(x, y)| \ge \eta \delta(x, y)}[X_t = x, Y_t = y] \ge \kappa. 
    \end{align} 
    
    Then, for all $\varepsilon \in (0, 1]$, it holds that
    \begin{align*}
        \mix{\markov}{\err} &\leq \frac{\big(\ln(D/d) + 2 \ln(2)\big)^2}{\ln(1+\eta)^2 \kappa}  \ln \left (\frac{1}{\err} \right ) .
    \end{align*}
    If $\ln(D/d) \in \bigOmega{1}$, then this bound simplifies to
    \begin{align*}
        \mix{\markov}{\err} &\in \bigO{\frac{\ln(D/d)^2}{\ln(1+\eta)^2 \kappa}  \ln \left (\frac{1}{\err} \right )}[\Bigg] .
    \end{align*}
\end{restatable}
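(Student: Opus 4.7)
The plan is to couple two copies $(X_t, Y_t)_{t \in \N}$ of~$\markov$ and analyse the evolution of $\delta_t \defeq \delta(X_t, Y_t)$ via a logarithmic transformation. The self-loop condition $P(x, x) > 0$ lets me assume the coupling is absorbing at $\delta_t = 0$, so once the two copies meet they stay together. I then introduce the potential $\Phi_t \defeq \ln(4\delta_t/d)$ if $\delta_t > 0$ and $\Phi_t \defeq 0$ if $\delta_t = 0$. On the live states, $\Phi_t$ lies in $[2\ln 2, L]$ with $L \defeq \ln(D/d) + 2\ln 2$, which is exactly the quantity appearing in the numerator of the bound, and the shift $\ln 4$ will supply precisely the cushion needed to absorb one-step jumps into the absorbing state.

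First, I verify that $(\Phi_t)$ is a non-negative supermartingale. On the absorbing set this is trivial. On the live set, writing $p \defeq \Pr{\delta_{t+1} > 0}[X_t, Y_t]$ and applying Jensen's inequality to $\ln$ together with \cref{eq:driftCondition} yields
\begin{align*}
\E{\Phi_{t+1}}[X_t, Y_t] \leq p \ln(4 \E{\delta_{t+1}}[X_t, Y_t]/(pd)) \leq p \Phi_t - p \ln p \leq \Phi_t,
\end{align*}
where the final inequality follows from the elementary bound $-p\ln p \leq (1-p) \cdot 2\ln 2$ on $[0,1]$ combined with $\Phi_t \geq 2\ln 2$. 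Second, I use \cref{eq:largeJumps} to lower bound the per-step squared increment: with probability at least $\kappa$, either $\delta_{t+1} \geq (1+\eta)\delta_t$ (so $\Phi_{t+1} - \Phi_t \geq \ln(1+\eta)$) or $\delta_{t+1} \leq (1-\eta)\delta_t$ (so $|\Phi_{t+1} - \Phi_t| \geq -\ln(1-\eta) \geq \ln(1+\eta)$; and if the jump sends $\delta_{t+1} = 0$, then $|\Phi_{t+1} - \Phi_t| = \Phi_t \geq 2\ln 2 \geq \ln(1+\eta)$ for $\eta \in (0,1)$). Consequently, $\E{(\Phi_{t+1} - \Phi_t)^2}[\Phi_t] \geq \sigma^2 \defeq \kappa \ln(1+\eta)^2$.

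Given a non-negative supermartingale $(\Phi_t)$ on $[0, L]$ whose per-step squared increment exceeds $\sigma^2$, a standard hitting-time argument (via optional stopping on $\Phi_{t \wedge T}^2 - \sigma^2 (t \wedge T)$ and comparison with a simple random walk on $[0, L]$) yields $\E{T}[X_0=x, Y_0=y] \leq L^2/\sigma^2$, where $T$ is the coupling time. Markov's inequality then gives a coupling-failure probability of at most $1/2$ after $2L^2/\sigma^2$ steps, and restarting this argument $\lceil \log_2(1/\err) \rceil$ times yields a coupling-failure probability of at most $\err$ after $\bigO{L^2 \ln(1/\err)/\sigma^2}$ steps. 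Combined with the coupling inequality for total-variation mixing, this matches the claimed bound; the asymptotic form when $\ln(D/d) \in \bigOmega{1}$ is obtained by absorbing the additive $2\ln 2$ into the big-$\bigO$ notation.

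The main obstacle is the handling of the absorbing boundary $\delta_{t+1} = 0$. The naive choice $\Phi_t = \ln(\delta_t/d)$ fails to be a supermartingale, because mass escaping to $\delta_{t+1} = 0$ would formally contribute $-\infty$ under Jensen and cannot be balanced against the $\Phi_{t+1} = 0$ convention on the live side. The rescaling by $4$ is chosen to provide exactly the cushion required for the elementary inequality $-p\ln p \leq (1-p) \cdot 2\ln 2$ on $[0,1]$ to hold, and in turn depends on the self-loop condition $P(x,x) > 0$ to justify that an absorbing coupling can be engineered in the first place. This boundary analysis is, I suspect, precisely the subtlety whose absence in the original Greenberg--Pascoe--Randall formulation has to be remedied in the alternative statement.
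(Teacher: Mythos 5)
Your overall strategy is the same as the paper's --- a logarithmic transformation of $\delta_t$, a supermartingale argument via Jensen, a variance lower bound from \cref{eq:largeJumps}, and an optional-stopping estimate on the coupling time --- but several of your intermediate steps are genuinely different, and two of them are cleaner than the paper's. Your potential $\Phi_t = \ln(4\delta_t/d)$ with $\Phi_t = 0$ at absorption is an affine shift of the paper's $\psi_t = \overline{\ln}(\delta_t/d)$, so the object is the same up to a constant, but your verification of the supermartingale property is a different route: you split on $p = \Pr{\delta_{t+1} > 0}$, apply Jensen to $\ln$ on the live branch, and close the gap with the elementary bound $-p\ln p \leq (1-p)\cdot 2\ln 2$, rather than applying Jensen to the piecewise concave extension $\overline{\ln}$. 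Both are valid; yours makes explicit \emph{why} $\ln 4$ is precisely the cushion needed. Your variance argument is also shorter and correct: because $\Phi_t \geq 2\ln 2$ on live states and $\Phi = 0$ at absorption, a jump to the absorbing state automatically has magnitude $\geq 2\ln 2 > \ln(1+\eta)$, so you can read off $\E{(\Phi_{t+1}-\Phi_t)^2} \geq \kappa\ln(1+\eta)^2$ directly, without the paper's conditioning on the event $A$ of a direct jump to absorption and the $\bigl(\kappa - \Pr{A}\bigr)/\bigl(1 - \Pr{A}\bigr)$ bookkeeping.

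There is one concrete gap: the process you name for optional stopping, $\Phi_{t\wedge T}^2 - \sigma^2(t\wedge T)$, is not a submartingale. Its one-step drift on a live state is $\E{(\Phi_{t+1}-\Phi_t)^2} + 2\Phi_t\,\E{\Phi_{t+1}-\Phi_t} - \sigma^2$, and since $\Phi$ is a \emph{strict} supermartingale with $\Phi_t > 0$, the middle term $2\Phi_t\,\E{\Phi_{t+1}-\Phi_t}$ can be as negative as $-2L\sigma$ even when the variance equals $\sigma^2$ exactly (e.g.\ $\Phi_{t+1} = \Phi_t - \sigma$ deterministically). The correct process is $(L - \Phi_{t\wedge T})^2 - \sigma^2(t\wedge T)$, whose drift equals $2(L-\Phi_t)\bigl(\Phi_t - \E{\Phi_{t+1}}\bigr) + \E{(\Phi_{t+1}-\Phi_t)^2} - \sigma^2 \geq 0$ since both the supermartingale deficit and $L - \Phi_t$ are non-negative; optional stopping on this process (after the standard localization at $T \wedge n$ and monotone convergence to justify passing to $T$) gives $\sigma^2\,\E{T} \leq \E{(L-\Phi_T)^2} - (L-\Phi_0)^2 \leq L^2$, matching your claimed $\E{T} \leq L^2/\sigma^2$. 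This is exactly what the paper's \Cref{lemma:greenberg_stopping} packages, with $(D - S_t)^2 - Qt - M$ as the submartingale. Your derivation otherwise goes through; only this one formula needs to be replaced.
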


% \subsection{Approximate Sampling and Randomized Approximation} \label{sec:def_appx}

% The success probability can be increased, for $\delta > 0$, to $1-\delta$ by taking the median of $\bigTheta{\ln(1 / \delta)}$ independent samples of~$X$ as, for example, stated by \cite[Lemma 6.1]{JVV86}.\todo{I think this is universally known, no need to mention again.}

%\Cref{lemma:exp_potential} differs from \cite[Theorem~$3.3$]{GreenbergPR09IncorrectSupermartingaleDrift} in three aspects.
%First of all, we allow the Markov chain to have an infinite state space, but assume instead directly that it is ergodic.
%
%Secondly, we do not assume that $\delta$ is lower bounded by $1$ for all pairs distinct states.
%Instead, we allow any lower bound $d > 0$ and incorporate this in the mixing time bound by scaling $\delta$ appropriately.
%
%The third and most important difference, however, is that we assume that the coupling is defined for all pairs of states in the first place.

\section{Polymer Dynamics}\label{sec:polymerDynamics}

We analyze the following Markov chain for a polymer model with a polymer clique cover.

\begin{definition}[Polymer Clique Dynamics]
    \label{def:markov_chain}
    Let~\polymerModel be a polymer model, and let~\polymerClique be a polymer clique cover of~\polymerModel with size~\numberOfCliques. We define \markov[\polymerModel] to be a Markov chain with state space~\polyfams. Let $(X_t)_{t \in \N}$ denote a (random) sequence of states of~\markov[\polymerModel], where~$X_0$ is arbitrary.
    
    Then, for all $t \in \N$, the transitions of \markov[\polymerModel] are as follows:
    \begin{algorithmic}[1]
        \State\label{line:choosePolymerClique}choose $i \in [m]$ uniformly at random\,;
        \State\label{line:samplePolymer}choose $\polyfam \in \polyfams[][\polymerClique[i]]$ according to $\gibbsDistribution[][\polymerClique[i]]$\,;
        \State\label{line:removePolymer}\lIf{$\polyfam = \emptyset$}
        {
            $X_{t+1} = X_{t} \setminus \polymerClique[i]$
        }
        \State\label{line:addPolymer}\lElseIf{\emph{$X_{t} \cup \polyfam$ is a valid polymer family}}
        {
            $X_{t+1} = X_{t} \cup \polyfam$
        }
        \State\label{line:doNotChange}\lElse
        {
            $X_{t+1} = X_{t}$
        }
    \end{algorithmic}
\end{definition}

% \Cref{thm:markov_chain} only holds if we assume the \pmc (\cref{def:pmc}).
% \polymixcond
% \begin{restatable}{definition}{pmc}
% \begin{definition}[\Pmc{}]
%     \label{def:pmc}
%     Let $\polymerModel = (\polys, \weight, \ncomp)$ be a polymer model, and let $f\colon \polys \to \R_{>0}$. We say that~\polymerModel \emph{satisfies the \pmc{} with~$f$} if and only if, for all $\polymer \in \polys$, it holds that
%     \[
%     \sum_{\substack{\polymer' \in \polys\colon \polymer' \ncomp \polymer, \\\polymer' \neq \polymer}} f(\polymer') \frac{\weight[\polymer']}{1 + \weight[\polymer']} \le f(\gamma)\ .
%     \qedhere
%     \]
% \end{definition}
% \end{restatable}

% In order to prove \Cref{thm:markov_chain}, we construct the following Markov chain.

Given a polymer model $\polymerModel = (\polys, \weight, \ncomp)$ and a polymer Markov chain~\markov[\polymerModel], let~\transitionProbabilitiesName denote the transition matrix of~\markov[\polymerModel]. That is, for all $\polyfam, \polyfam' \in \polyfams[\polymerModel]$, the entry \transitionProbabilities{\polyfam}{\polyfam'} denotes the probability to transition from state~\polyfam to state~$\polyfam'$ in a single step. Note that~\transitionProbabilitiesName is time-homogeneous and that, for all $\polyfam, \polyfam' \in \polyfams[\polymerModel]$ with $\transitionProbabilities{\polyfam}{\polyfam'} > 0$, it holds that the symmetric difference of~\polyfam and~$\polyfam'$ has a cardinality of at most~$1$, since the polymer families of a polymer clique are all singletons. Further note that~\markov[\polymerModel] has a positive self-loop probability, as the polymers from a polymer clique are pairwise incompatible.

The transition probabilities of two neighboring states of~\markov[\polymerModel] follow a simple pattern. In order to ease notation, for all $\polymer \in \polys$, let $z_{\polymer} = \sum_{i \in [m]\colon \gamma \in \polymerClique[i]} 1/\partitionFunction[][\polymerClique[i]]$. For all $\polyfam, \polyfam' \in \polyfams[\polymerModel]$ such that there is a $\polymer \in \polys$, $\polymer \notin \polyfam$ such that $\polyfam' = \polyfam \cup \{\polymer\}$, it holds that
\begin{align*}
    \numberthis\label{eq:transitions}
    \transitionProbabilities{\polyfam}{\polyfam'}
        &= \frac{1}{m} \sum_{\substack{i \in [m]\colon\\ \polymer \in \polymerClique[i]}} \gibbsDistributionFunction{\{\polymer\}}[][\polymerClique[i]]
        = \frac{1}{m} \sum_{\substack{i \in [m]\colon\\ \polymer \in \polymerClique[i]}} \frac{\weight[\polymer]}{\partitionFunction[][\polymerClique[i]]}
        = w_{\gamma} \frac{z_{\gamma}}{m} > 0 \ \textrm{ and that}\\
    \transitionProbabilities{\polyfam'}{\polyfam}
        &= \frac{1}{m} \sum_{\substack{i \in [m]\colon\\ \polymer \in \polymerClique[i]}} \gibbsDistributionFunction{\{\emptyset\}}[][\polymerClique[i]]
        = \frac{1}{m} \sum_{\substack{i \in [m]\colon\\ \polymer \in \polymerClique[i]}} \frac{1}{\partitionFunction[][\polymerClique[i]]}
        = \frac{z_{\gamma}}{m} > 0.
\end{align*}

The polymer clique dynamics are suitable for sampling from the Gibbs distribution of a polymer model, since the limit distribution of the Markov chain converges to~\gibbsDistribution.

\begin{restatable}{lemma}{convergenceLemma}%[Convergence to the Gibbs distribution]
	\label{lemma:convergence}
	Let~\polymerModel be a polymer model. The polymer Markov chain~\markov[\polymerModel] is ergodic with stationary distribution~\gibbsDistribution[\polymerModel].
\end{restatable}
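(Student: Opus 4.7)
The plan is to verify the three standard ingredients of ergodicity separately---irreducibility, aperiodicity, and that $\gibbsDistribution[\polymerModel]$ is a stationary distribution---after which positive recurrence on the countable state space $\polyfams$ follows automatically from the existence of a stationary probability distribution.

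For irreducibility I would show that every state communicates with the empty family $\emptyset \in \polyfams$. From any state $\polyfam$, each polymer $\polymer \in \polyfam$ can be removed in a single step with probability $z_{\polymer}/m > 0$ by \cref{eq:transitions} (pick on \cref{line:choosePolymerClique} some clique that contains $\polymer$, draw the empty family on \cref{line:samplePolymer}, and apply \cref{line:removePolymer}); iterating yields a positive-probability path $\polyfam \to \emptyset$. Conversely, any target family $\polyfam' = \{\polymer_1, \ldots, \polymer_k\}$ is reachable from $\emptyset$ by adding one polymer at a time, since every intermediate subset of $\polyfam'$ is itself a valid polymer family and each addition has probability $\weight[\polymer_j] z_{\polymer_j}/m > 0$ by \cref{eq:transitions}. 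Aperiodicity follows because every state has a positive self-loop probability: for $\polyfam = \emptyset$, drawing the empty family on \cref{line:samplePolymer} triggers \cref{line:removePolymer} without change, and for $\polyfam \neq \emptyset$, choosing a clique containing some existing $\polymer \in \polyfam$ and drawing $\{\polymer\}$ triggers \cref{line:addPolymer} without change.

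For stationarity I would check the detailed balance equation
\[
    \gibbsDistributionFunction{\polyfam}[\polymerModel] \transitionProbabilities{\polyfam}{\polyfam'} = \gibbsDistributionFunction{\polyfam'}[\polymerModel] \transitionProbabilities{\polyfam'}{\polyfam}
\]
for every ordered pair $\polyfam, \polyfam' \in \polyfams$. When $\polyfam = \polyfam'$ the identity is trivial, and when $\size{\polyfam \triangle \polyfam'} \ge 2$, both transition probabilities vanish---since a single transition changes the state by at most one polymer---so the equation again holds trivially. The only non-trivial case is $\polyfam' = \polyfam \cup \{\polymer\}$ with $\polymer \notin \polyfam$, where \cref{eq:gibbsDistribution} gives $\gibbsDistributionFunction{\polyfam'}[\polymerModel] = \weight[\polymer] \gibbsDistributionFunction{\polyfam}[\polymerModel]$, and substituting the formulas of \cref{eq:transitions} reduces both sides to $\weight[\polymer] z_{\polymer} \gibbsDistributionFunction{\polyfam}[\polymerModel] / m$. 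Hence $\gibbsDistribution[\polymerModel]$ is reversible and therefore stationary; combined with irreducibility and aperiodicity, this yields ergodicity.

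The proof is essentially verification rather than discovery: the chain in \cref{def:markov_chain} is engineered so that the uniform clique choice on \cref{line:choosePolymerClique} and the normalized sampling from $\gibbsDistribution[][\polymerClique[i]]$ on \cref{line:samplePolymer} produce transition probabilities whose asymmetry between adding and removing a single polymer~$\polymer$ is exactly the factor $\weight[\polymer]$---matching the ratio of Gibbs weights. The main (mild) bookkeeping challenge is exhaustively handling the cases of the symmetric difference and confirming that \cref{line:doNotChange} (rejected addition moves) contributes only to self-loop probability and is therefore automatically consistent with detailed balance.
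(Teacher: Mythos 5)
Your proposal is correct and follows essentially the same route as the paper's own proof: establishing irreducibility via reachability through $\emptyset$, verifying detailed balance by reducing to the single-polymer symmetric-difference case (where \cref{eq:transitions} gives the factor $\weight[\polymer]$), and concluding ergodicity from positive self-loops (aperiodicity) together with positive recurrence implied by the existence of the stationary distribution. The only cosmetic differences are ordering (you argue aperiodicity before detailed balance) and which event you use to exhibit a positive self-loop probability for nonempty states.
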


Recall \Cref{def:pmc} (clique dynamics) from the introduction. Assuming that the condition holds, we obtain the following bound on the mixing time of~\markov[\polymerModel].

\begin{restatable}{lemma}{mixingLemma}%[Mixing time]
	\label{lemma:mixing}
    Let $\polymerModel = (\polys, w, \ncomp)$ be a polymer model satisfying the \pmc with function~$f$, and let \polymerClique be a polymer clique cover of~\polymerModel with size~\numberOfCliques. Then, for all $\varepsilon \in (0, 1]$, it holds that
	\[
		\mix{\markov[\polymerModel]}{\err} \in \bigO{
			\frac{m^3}{\min_{\gamma \in \polys}\{z_{\gamma}\}}
			\ln \left( m 
				\frac
					{\max_{\gamma \in \polys} \left\{ \frac{f(\gamma)}{z_{\gamma} (1+w_{\gamma})} \right\}}
					{\min_{\gamma \in \polys} \left\{ \frac{f(\gamma)}{z_{\gamma} (1+w_{\gamma})} \right\}}
				\right)^2
			\ln \left( \frac{1}{\err} \right)	
			}.
	\]
\end{restatable}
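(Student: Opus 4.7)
I apply \Cref{lemma:exp_potential} to $\markov[\polymerModel]$ with the identity coupling and a distance that matches the left-hand side of the \pmc. The identity coupling lets both chains draw the same clique index $i \in [m]$ and the same family $\polyfam \in \polyfams[][\polymerClique[i]]$ and then applies each chain's own update rule. Setting $h(\polymer) = f(\polymer)/(z_{\polymer}(1 + w_{\polymer}))$ I define
\[
    \delta(X, Y) = \sum_{\polymer \in X \triangle Y} h(\polymer),
\]
which vanishes exactly when $X = Y$. Since any family contains at most one polymer per clique, $|X|, |Y| \le m$, so $\delta \in \{0\} \cup [d, D]$ with $d = \min_{\polymer} h(\polymer)$ and $D \le 2m \max_{\polymer} h(\polymer)$.

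\paragraph*{Drift.}
To verify \eqref{eq:driftCondition} it suffices, via path coupling (Bubley--Dyer), to treat an adjacent pair $X, Y = X \cup \{\polymer\}$. A case analysis on $(i, \polyfam)$ shows that $\delta$ drops to $0$ exactly when $\polymer \in \polymerClique[i]$ and $\polyfam \in \{\emptyset, \{\polymer\}\}$, an event of probability $(1 + w_\polymer)z_\polymer/m$, and otherwise $\delta$ can increase only by some $h(\polymer')$ when $\polyfam = \{\polymer'\}$ for some $\polymer' \ncomp \polymer$, $\polymer' \neq \polymer$, with $X \cup \{\polymer'\}$ valid (then $Y \cup \{\polymer'\}$ is not); this event has probability at most $w_{\polymer'} z_{\polymer'}/m$. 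Using $h(\polymer)(1 + w_\polymer)z_\polymer = f(\polymer)$ and $h(\polymer') w_{\polymer'} z_{\polymer'} = f(\polymer') w_{\polymer'}/(1 + w_{\polymer'})$, the expected change of $\delta$ is at most
\[
    \frac{1}{m}\Bigl(-f(\polymer) + \sum_{\polymer' \ncomp \polymer,\, \polymer' \neq \polymer} \frac{f(\polymer') \, w_{\polymer'}}{1 + w_{\polymer'}} \Bigr) \le 0,
\]
the last step being the \pmc; path coupling then lifts this to all pairs.

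\paragraph*{Large jumps and assembly.}
For an arbitrary pair $X \neq Y$ pick $\polymer^{*} \in X \triangle Y$ maximizing $h$ on $X \triangle Y$. Because $|X \triangle Y| \le 2m$, we have $h(\polymer^{*}) \ge \delta(X, Y)/(2m)$. With probability at least $z_{\polymer^{*}}/m \ge (\min_{\polymer} z_{\polymer})/m$, the coupling picks a clique $i \ni \polymer^{*}$ together with $\polyfam = \emptyset$; then both chains delete the same polymers, so $X' \triangle Y' = (X \triangle Y) \setminus \polymerClique[i]$ and $\delta$ falls by at least $h(\polymer^{*}) \ge \delta(X, Y)/(2m)$. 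This verifies \eqref{eq:largeJumps} with $\eta = 1/(2m)$ and $\kappa = (\min_{\polymer} z_{\polymer})/m$. Substituting $d$, $D$, $\eta$, $\kappa$ into \Cref{lemma:exp_potential} and using $\ln(1 + 1/(2m)) = \Theta(1/m)$ yields the stated bound. The main obstacle is picking a single distance that harmonizes both conditions: the drift forces the normalization $h = f/(z(1 + w))$ so that the \pmc becomes directly usable, while the large-jump estimate relies on the clique-cover bound $|X|, |Y| \le m$ to guarantee that one removed polymer already accounts for an $\Omega(1/m)$ fraction of $\delta$.
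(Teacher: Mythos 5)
Your proof is correct and follows essentially the same route as the paper: the same distance $\delta(X,Y)=\sum_{\gamma\in X\oplus Y}\delta'(\gamma)$ with $\delta'(\gamma)=f(\gamma)/(z_\gamma(1+w_\gamma))$, the same identity coupling (shared clique index and shared family draw), the same $\eta=1/(2m)$ and $\kappa=\min_\gamma z_\gamma/m$, and the same plug-in to the coupling theorem. The only notable difference is presentational: you establish the drift inequality on adjacent pairs and invoke path coupling with the (valid) triangle inequality for $\delta$ to extend it, while the paper verifies the drift directly for an arbitrary symmetric difference $\Delta$ by splitting the change into $\delta^+-\delta^-$ over all $\gamma\in\Delta$; you correctly verify the large-jump bound \eqref{eq:largeJumps} directly on arbitrary pairs (where path coupling would not apply, as the paper's appendix emphasizes). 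Both computations reduce to the \pmc in identical ways, so this is the same proof up to bookkeeping.
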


Last, we combine \Cref{lemma:convergence,lemma:mixing}, and observe that $1/\prt_{\max} \le z_{\gamma} \le m$ and $1 \le 1 + w_{\gamma} \le \prt_{\max}$ for all polymers $\gamma \in \polys$ to obtain the main result of this section.

\begin{theorem}
    \label{thm:markov_chain}
    Let $\polymerModel = (\polys, w, \ncomp)$ be a polymer model, let \polymerClique be a polymer clique cover of~\polymerModel with size~\numberOfCliques, and let $\prt_{\max} = \max_{i \in [\numberOfCliques]} \{\partitionFunction[][\polymerClique[i]]\}$. Further, assume that~\polymerModel satisfies the \pmc with function~$f$, and let $f_{\max} = \max_{\polymer \in \polys} \{f(\polymer)\}$ and $f_{\min} = \min_{\polymer \in \polys} \{f(\polymer)\}$.
    
    Then the Markov chain \markov[\polymerModel] has the unique stationary distribution \gibbsDistribution[\polymerModel] and, for all $\varepsilon \in (0, 1]$, it holds that
    \[
        \mix{\markov[\polymerModel]}{\err} 
        \in \bigO{
            m^3 \prt_{\max}
            \ln \left( 
            m^2 \prt_{\max}^2
            \frac{f_{\max}}{f_{\min}}
            \right)^2
            \ln \left( \frac{1}{\err} \right)
        }.
    \]
\end{theorem}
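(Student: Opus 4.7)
The proof is essentially a direct combination of \Cref{lemma:convergence} and \Cref{lemma:mixing}, followed by elementary bounds on the quantities $z_\gamma$ and $1 + w_\gamma$ that appear in \Cref{lemma:mixing} in terms of the more interpretable quantities $m$, $\prt_{\max}$, $f_{\max}$, and $f_{\min}$ used in the theorem.

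First, ergodicity and the identification of the stationary distribution as $\gibbsDistribution[\polymerModel]$ are supplied verbatim by \Cref{lemma:convergence}, which requires no further work. For the mixing-time bound, I would invoke \Cref{lemma:mixing} and then argue that every $z_\gamma$ and every $1 + w_\gamma$ is sandwiched between explicit functions of $m$ and $\prt_{\max}$. Concretely, recall from \Cref{sec:restrictedPolymerModels} that $\partitionFunction[][\clique_i] = 1 + \sum_{\polymer \in \clique_i} w_\polymer \ge 1$, and that $\partitionFunction[][\clique_i] \le \prt_{\max}$ by definition of $\prt_{\max}$. Since $z_\gamma = \sum_{i\colon \gamma \in \clique_i} 1/\partitionFunction[][\clique_i]$ is a sum of at most $m$ terms, each in $[1/\prt_{\max}, 1]$, and at least one such term exists (because $\polymerClique$ is a cover), one obtains $1/\prt_{\max} \le z_\gamma \le m$. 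Similarly, for any $\gamma$ and any $i$ with $\gamma \in \clique_i$, we have $1 + w_\gamma \le \partitionFunction[][\clique_i] \le \prt_{\max}$ and trivially $1 + w_\gamma \ge 1$.

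From here I would simply substitute these bounds into the expression from \Cref{lemma:mixing}. The prefactor $m^3 / \min_\gamma z_\gamma$ becomes at most $m^3 \prt_{\max}$. For the logarithm, the ratio
\[
    \frac{\max_{\gamma \in \polys} \{f(\gamma)/(z_\gamma (1+w_\gamma))\}}{\min_{\gamma \in \polys} \{f(\gamma)/(z_\gamma (1+w_\gamma))\}}
\]
is upper-bounded by $(f_{\max} \cdot \prt_{\max} \cdot 1) / (f_{\min} \cdot (1/m) \cdot (1/\prt_{\max}))^{-1}$, which after collecting reciprocals gives $m \prt_{\max}^2 f_{\max}/f_{\min}$; multiplying by the outer factor $m$ inside the logarithm yields the $m^2 \prt_{\max}^2 f_{\max}/f_{\min}$ appearing in the theorem. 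Combining these substitutions with the $\ln(1/\err)$ factor from \Cref{lemma:mixing} produces the claimed bound.

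I do not anticipate any real obstacle here: the entire argument is bookkeeping, and all the analytic content has been done in \Cref{lemma:convergence} (ergodicity, reversibility with respect to $\gibbsDistribution$) and \Cref{lemma:mixing} (the coupling-based mixing bound via \Cref{lemma:exp_potential} and the \pmc). The only mild care required is to ensure the inequalities on $z_\gamma$ and $1 + w_\gamma$ use only what is guaranteed by the definition of a polymer clique cover and of $\prt_{\max}$, and in particular do not implicitly assume disjointness of the cliques in $\polymerClique$.
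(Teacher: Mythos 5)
Your proposal matches the paper's argument exactly: the paper obtains \Cref{thm:markov_chain} by combining \Cref{lemma:convergence} and \Cref{lemma:mixing} with the observations $1/\prt_{\max} \le z_\gamma \le m$ and $1 \le 1 + w_\gamma \le \prt_{\max}$, and your bookkeeping (including the remark that the lower bound on $z_\gamma$ only needs the cover property, not disjointness) is the same substitution the authors make. The stray $(\cdot)^{-1}$ in your display is a typo, since the quantity you go on to simplify is the intended ratio $f_{\max}\prt_{\max}\big/\big(f_{\min}/(m\prt_{\max})\big)$ and your final expression is correct.
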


\subsection{Comparison to Conditions for Cluster Expansion}\label{sec:comparison}
In order to set our \pmc in the context of existing conditions for absolute convergence of the cluster expansion, we compare it to the condition of Fernández and Procacci~\cite{FP07}. We choose it for comparison because it is, to the best of our knowledge, the least restrictive condition for absolute convergence of the cluster expansion of abstract polymer models. As Fernández and Procacci show, their condition is an improvement over other known conditions, including the Dobrushin condition~\cite{Dob96} and the Koteck{\'y}--Preiss condition~\cite{1986:Kotecky:cluster_expansion_polymer_models}.

\begin{definition}[From~{\cite{FP07}}] \label{def:fpc}
	 Let $\polymerModel = (\polys, w, \ncomp)$ be a polymer model, and let $\neighbors{\polymer} = \set{\polymer' \in \polys}{\polymer' \ncomp \polymer}$.
	 We say that $\polymerModel$ \emph{satisfies the \fpc} if and only if there is a function $f\colon \polys \to \R_{> 0}$ such that, for all $\polymer \in \polys$, it holds that
	 \[
		\sum_{\polyfam \in \polyfams[\polymerModel][\neighbors{\polymer}]} \prod_{\polymer' \in \polyfam} f(\polymer') \weight[\polymer'] \le f(\polymer) .
	 \]
\end{definition}

Note that we state the condition slightly differently from the version of the original authors to ease comparison.
The original form is recovered by setting $f\colon \polymer \mapsto \hat{f}(\polymer)/\weight[\polymer]$ for some function $\hat{f}\colon \polys \to \R_{> 0}$. 
Further, the original version allows $f$ (or $\hat{f}$ respectively) to take the value~$0$.
However, note that if $f(\polymer) = 0$ for any $\polymer \in \polys$, then the condition is trivially void because $\emptyset \in \polyfams[\polymerModel][\neighbors{\polymer}]$, which lower bounds the left hand side of the inequality by $1$.

The following statement shows how our \pmc relates to the \fpc as given in \Cref{def:fpc}.
\begin{restatable}{proposition}{comparisonProposition} 
	\label{prop:comparison}
	If a polymer model $\polymerModel = (\polys, \weight, \ncomp)$ satisfies the \fpc for a function $f$, then it also satisfies the \pmc for the same function.
\end{restatable}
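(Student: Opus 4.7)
The proof reduces to two elementary observations: (i) the Fernández--Procacci sum contains the singleton families as a subsum, and (ii) the function $w \mapsto w/(1+w)$ is dominated by the identity on $\R_{\ge 0}$. Concretely, I would fix an arbitrary $\gamma \in \polys$ and show directly that the left-hand side of the \pmc{} at~$\gamma$ is bounded above by the left-hand side of the \fpc{} at~$\gamma$, after which the assumed \fpc{} bound $\le f(\gamma)$ closes the argument.

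First I would unpack the \fpc{} sum. For each $\gamma' \in \neighbors{\gamma} \setminus \{\gamma\}$, the singleton $\{\gamma'\}$ is a polymer family (the pairwise-compatibility requirement in the definition of a polymer family is vacuous on a single element, independently of the reflexive convention on $\ncomp$), and it is contained in $\neighbors{\gamma}$. Hence $\{\gamma'\} \in \polyfams[\polymerModel][\neighbors{\gamma}]$, and its contribution to the \fpc{} sum is exactly $f(\gamma') \weight[\gamma']$. Since every term in the \fpc{} sum is nonnegative (weights are positive and $f > 0$), throwing away all non-singleton families and the singleton $\{\gamma\}$ (if present) yields the lower bound
\[
\sum_{\polyfam \in \polyfams[\polymerModel][\neighbors{\gamma}]} \prod_{\gamma'' \in \polyfam} f(\gamma'') \weight[\gamma''] \;\ge\; \sum_{\gamma' \in \neighbors{\gamma} \setminus \{\gamma\}} f(\gamma') \weight[\gamma'] .
\]

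Next I would apply the inequality $\frac{w}{1+w} \le w$, which holds for every $w \ge 0$ because $1 + w \ge 1$. Multiplying by $f(\gamma') > 0$ and summing over $\gamma' \in \neighbors{\gamma} \setminus \{\gamma\}$ gives
\[
\sum_{\substack{\gamma' \in \polys \colon \gamma' \ncomp \gamma,\\ \gamma' \neq \gamma}} f(\gamma') \frac{\weight[\gamma']}{1 + \weight[\gamma']} \;\le\; \sum_{\gamma' \in \neighbors{\gamma} \setminus \{\gamma\}} f(\gamma') \weight[\gamma'] \;\le\; \sum_{\polyfam \in \polyfams[\polymerModel][\neighbors{\gamma}]} \prod_{\gamma'' \in \polyfam} f(\gamma'') \weight[\gamma''] \;\le\; f(\gamma),
\]
where the last step is the assumed \fpc{}. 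Since $\gamma$ was arbitrary, this is exactly the \pmc{} with the same $f$.

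There is essentially no obstacle here; the only point that deserves a sentence of justification is that singletons always lie in $\polyfams[\polymerModel][\neighbors{\gamma}]$ so that the lower bound by singletons is valid. Everything else is the elementary inequality $\frac{w}{1+w} \le w$, which is why the \pmc{} is \emph{strictly} weaker in general: the bound is loose whenever some $\weight[\gamma'] > 0$ and whenever any non-singleton polymer family contributes to the \fpc{} sum.
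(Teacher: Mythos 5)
Your proof is correct and follows essentially the same route as the paper: lower-bound the Fern\'andez--Procacci sum by its singleton subsum and then apply $w/(1+w) \le w$ termwise. The only cosmetic difference is that the paper first establishes the stronger intermediate inequality $\sum_{\gamma' \ncomp \gamma} f(\gamma') \weight[\gamma'] \le f(\gamma)$ (retaining the $\emptyset$ family to observe strictness) before passing to the \pmc{}, whereas you drop $\emptyset$ and $\{\gamma\}$ immediately; this changes nothing of substance.
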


Note that \Cref{prop:comparison} implies that if a polymer model satisfies the \fpc for a function $f$, then \Cref{thm:markov_chain} bounds the mixing time of the polymer Markov chain for any given clique cover.
Further, \Cref{prop:comparison} and its implied mixing time bounds for the polymer Markov chain carry over to all convergence conditions that are more restrictive than the \fpc, such as the Dobrushin condition and the Koteck{\'y}--Preiss condition.

\section{Algorithmic Results}\label{sec:algo}

We now discuss how the polymer Markov chain~\markov of a polymer model~\polymerModel with a clique cover of size~\numberOfCliques is used to approximate~\partitionFunction[\polymerModel] in a randomized fashion.
To this end, \markov is turned into an approximate sampler for~\polymerModel (\Cref{thm:sampling}).
Then this sampler is applied in an algorithmic framework (\Cref{algo:appx_prt}) that yields an $\err$-approximation of~\partitionFunction[\polymerModel] (\Cref{thm:appx_partition_function}).
Under certain assumptions, such as that the restricted partition function of each polymer clique is in $\poly{\numberOfCliques}$, the approximation is computable in time $\poly{\numberOfCliques/\err}$. 

In order to discuss the computation time of operations on a polymer model rigorously, we need to make assumptions about the operations we consider and their computational cost.
To this end, we say that a polymer model $\polymerModel = (\polys, w, \ncomp)$ with a polymer clique cover~\polymerClique of size~\numberOfCliques is \emph{computationally feasible} if and only if all of the following operations can be performed in time $\poly{\numberOfCliques}$:
\begin{enumerate}[(1)]
    \item \label[assume]{assume:sampling:cover}
        for all $i \in [\numberOfCliques]$, we can draw~$\clique_i$ uniformly at random,
    \item \label[assume]{assume:sampling:clique}
        for all $i \in [\numberOfCliques]$ and all $\gamma \in \polys$, we can check whether $\gamma \in \clique_i$,
    \item \label[assume]{assume:sampling:ncomp}
        for all $\gamma, \gamma' \in \polys$, we can check whether $\gamma \ncomp \gamma'$,
    \item \label[assume]{assume:sampling:weights}
        for all $\polymer \in \polys$, we can compute~$\weight[\polymer]$.
%    \item \label[assume]{assume:sampling:setOperations}
%        for all $\polymer \in \polys$ and all $\polyfam \in \polyfams$, we can compute $\polyfam \setminus \{\polymer\}$ and $\polyfam \cup \{\polymer\}$, and we can decide whether $\polyfam = \emptyset$.\todo{Put this into the text.}
\end{enumerate}

In addition to the more complex operations above, we further assume that, for all $\polymer \in \polys$ and all $\polyfam \in \polyfams$, we can compute $\polyfam \setminus \{\polymer\}$ and $\polyfam \cup \{\polymer\}$, and we can decide whether $\polyfam = \emptyset$ in time $\poly{\numberOfCliques}$.
% As these operations seem very basic to us, we do not include them in the definition.

% Please note that we do not use \cref{assume:sampling:weights} in this section and it could thus be dropped from the definition.
% However, as we require it for our results in \Cref{appendix:expanders}, where we consider algorithmic applications of polymer models, we include it here.

\subsection{Sampling From the Gibbs Distribution}

We discuss under what assumptions one can approximately sample from the Gibbs distribution of a computationally feasible polymer model in time polynomial in the size of the clique cover. One of our main results is the following, which we recall from \Cref{sec:intro}.

\samplingTheorem*
%\begin{theorem}
%	\label{thm:sampling}
%    Let $\polymerModel = (\polys, w, \ncomp)$ be a computationally feasible polymer model, let~\polymerClique be a polymer clique cover of~\polymerModel with size~\numberOfCliques, and let $\prt_{\max} = \max_{i \in [\numberOfCliques]} \{\partitionFunction[][\clique_i]\}$.
%    Further, assume that
%    \begin{enumerate}[(a)]
%        \item \label[assume]{thm:sampling:prt}
%            $\prt_{\max} \in \poly{\numberOfCliques}$,
%        \item \label[assume]{thm:sampling:pmc}
%            $\polymerModel$ satisfies the \pmc for a function $f$ such that, for all $\gamma \in \polys$, it holds that $\eulerE^{-\poly{\numberOfCliques}} \le f(\gamma) \le \eulerE^{\poly{\numberOfCliques}}$, and that,
%        \item \label[assume]{thm:sampling:inner}
%            for all $i \in [\numberOfCliques]$, we can sample from $\gibbsDistribution[][\clique_i]$ in time $\poly{\numberOfCliques}$.
%    \end{enumerate}
%    
%    Then, for all $\err \in (0, 1]$, we can $\err$-approximately sample from~$\gibbs$ in time $\poly{\numberOfCliques/\err}$.
%\end{theorem}

By making a slightly stronger assumption about the polymer model, assumptions~(\ref{thm:sampling:prt}) and~(\ref{thm:sampling:pmc}) of \Cref{thm:sampling} are easily satisfied.

\begin{observation}\label{remark:spmc}
	If $\polymerModel$ satisfies, for all $\polymer \in \polys$,
	\begin{equation} \label{eq:spmc}
		\sum_{\polymer' \in \polys\colon \polymer' \ncomp \polymer} f(\polymer') \weight[\polymer'] \le f(\polymer) ,
	\end{equation}
	then the \pmc is satisfied for the same function $f$. 
	Thus, if \cref{eq:spmc} holds for an appropriate function $f$, assumption~(\ref{thm:sampling:pmc}) also holds.
	Further, by setting~$\polymer$ to be the polymer in $\polymerClique[i]$ that minimizes $f$, \cref{eq:spmc} implies that $\partitionFunction[][\polymerClique[i]] = 1 + \sum_{\polymer' \in \polymerClique[i]} \weight[\polymer'] \le 2$, meaning that \cref{thm:sampling:prt} is trivially satisfied. 
\end{observation}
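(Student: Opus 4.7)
The observation makes two separate implications which I would prove independently: first, that inequality \cref{eq:spmc} implies the clique dynamics condition from \Cref{def:pmc}; and second, that it additionally forces $\partitionFunction[][\polymerClique[i]] \le 2$ for every clique in the cover, which makes \cref{thm:sampling:prt} automatic. Both steps are term-by-term comparisons, so the plan is essentially to make explicit the inclusions/inequalities that relate the two sums.

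For the first implication, I would fix an arbitrary $\polymer \in \polys$ and compare the left-hand side of the \pmc{} to the left-hand side of \cref{eq:spmc}. Two observations are enough. First, since $\weight[\polymer'] > 0$, we have $\weight[\polymer']/(1 + \weight[\polymer']) \le \weight[\polymer']$, so each summand in the clique dynamics sum is at most the corresponding summand in \cref{eq:spmc}. Second, the \pmc{} sum is taken over $\polymer' \ncomp \polymer$ with $\polymer' \neq \polymer$, i.e.\ a subset of the index set of \cref{eq:spmc}, which ranges over all $\polymer' \ncomp \polymer$ (including $\polymer' = \polymer$ by reflexivity of $\ncomp$). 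Since all summands are nonnegative, dropping the $\polymer' = \polymer$ term can only decrease the sum. Combining these two observations yields
\[
  \sum_{\substack{\polymer' \ncomp \polymer \\ \polymer' \neq \polymer}} f(\polymer') \frac{\weight[\polymer']}{1 + \weight[\polymer']} \;\le\; \sum_{\polymer' \ncomp \polymer} f(\polymer') \weight[\polymer'] \;\le\; f(\polymer),
\]
which is precisely the \pmc.

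For the second implication, I would fix a clique $\polymerClique[i]$ and let $\polymer^* \in \polymerClique[i]$ be a minimizer of $f$ on $\polymerClique[i]$ (which exists because cliques are finite, being subsets of polymer families or handled directly when finite; the statement tacitly assumes this). The key observation is that every $\polymer' \in \polymerClique[i]$ is incompatible with $\polymer^*$ (pairwise incompatibility is the defining property of a polymer clique, and reflexivity covers $\polymer' = \polymer^*$). Applying \cref{eq:spmc} at $\polymer = \polymer^*$ and restricting the sum to $\polymerClique[i]$ gives
\[
  f(\polymer^*)\sum_{\polymer' \in \polymerClique[i]} \weight[\polymer'] \;\le\; \sum_{\polymer' \in \polymerClique[i]} f(\polymer') \weight[\polymer'] \;\le\; f(\polymer^*),
\]
where the first inequality uses that $f(\polymer^*) \le f(\polymer')$ for every $\polymer' \in \polymerClique[i]$. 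Dividing by $f(\polymer^*) > 0$ gives $\sum_{\polymer' \in \polymerClique[i]} \weight[\polymer'] \le 1$, so $\partitionFunction[][\polymerClique[i]] = 1 + \sum_{\polymer' \in \polymerClique[i]} \weight[\polymer'] \le 2$, establishing \cref{thm:sampling:prt} with the constant bound $\prt_{\max} \le 2$.

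There is no real obstacle here: the whole observation is a sanity check that a strictly stronger hypothesis implies both ingredients of \Cref{thm:sampling}. The only subtlety worth flagging is the careful use of reflexivity of $\ncomp$—in the first part it ensures that the \pmc{} sum really is a sub-sum of \cref{eq:spmc} after the $\weight/(1+\weight) \le \weight$ bound, and in the second part it legitimises including $\polymer^*$ itself in the restricted sum over $\polymerClique[i]$. Everything else is routine.
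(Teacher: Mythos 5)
Your proof is correct and takes essentially the same route as the paper's own inline argument: the first implication follows directly from $\weight[\polymer']/(1+\weight[\polymer']) \le \weight[\polymer']$ plus dropping the $\polymer'=\polymer$ term, and the second is exactly the paper's hint of evaluating \cref{eq:spmc} at a minimizer of~$f$ on the clique. You also correctly flag the tacit existence of that minimizer; in the setting of \Cref{thm:sampling}, where $f$ is bounded below by $\eulerE^{-\poly{m}}$, the infimum is positive and the argument carries over even if it is not attained.
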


Although the condition above is slightly more restrictive than the \pmc, it is more convenient to use for algorithmic applications. It can be seen as a weaker and more general version of the mixing condition by Chen et~al.~\cite{CGGPSV19}.

\subsection{Approximation of the Partition Function}\label{sec:approx_prt}

By now, we mainly discussed conditions for approximately sampling from the Gibbs distribution.
We now discuss how to turn this into a randomized approximation for the partition function.
To this end, we apply self-reducibility \cite{JVV86}.
However, note that the obvious way for applying self-reducibility, namely based on single polymers, might take $|\polys|$ reduction steps.
This is not feasible in many algorithmic applications of polymer models.

To circumvent this problem, we propose a self-reducibility argument based on polymer cliques.
By doing so, the number of reductions is bounded by the size of the clique cover that is used, thus adding no major overhead to the runtime of our proposed approximate sampling scheme.
Besides this idea of applying self-reducibility based on cliques, most of our arguments are analogous to known applications, like in \cite[Chapter $3$]{jerrum2003counting}.

We proceed by formalizing clique-based self-reducibility.
Let $\polymerModel = (\polys, \weight, \ncomp)$ be a polymer model, and let~$\clique$ be a polymer clique cover of~\polymerModel with size~$\numberOfCliques$.
%Further, let $\polymerClique[1], \dots, \polymerClique[\numberOfCliques]$ be some arbitrary order of these cliques.
We define a sequence of subsets of polymers $(\Uclique{i})_{0 \le i \le \numberOfCliques}$ with $\Uclique{0} = \emptyset$ and, for $i \in [\numberOfCliques]$, with $\Uclique{i} = \Uclique{i-1} \cup \polymerClique[i]$.

Further, for all $i \in [\numberOfCliques]$, let $\prtFrac[i] = \partitionFunction[][\Uclique{i-1}]/\partitionFunction[][\Uclique{i}]$.
Note that $\partitionFunction[][\Uclique{0}] = 1$ and $\partitionFunction[][\Uclique{\numberOfCliques}] = \partitionFunction$.
It holds that 
\[
	\partitionFunction 
	= \prod_{i \in [\numberOfCliques]} \frac{\partitionFunction[][\Uclique{i}]}{\partitionFunction[][\Uclique{i-1}]}
	= \left( \prod_{i \in [\numberOfCliques]} \prtFrac[i] \right)^{-1} .
\]
Hence, when approximating~\partitionFunction, it is sufficient to focus, for all $i \in [\numberOfCliques]$, on approximating $\prtFrac[i]$.

For all $i \in [\numberOfCliques]$, a similar relation holds with respect to the probability that a random $\polyfam \in \polyfams[][\Uclique{i}]$ is already in $\polyfams[][\Uclique{i-1}]$. More formally, let $i \in [\numberOfCliques]$, and let $\polyfam \sim \gibbsDistribution[][\Uclique{i}]$. Note that
\begin{equation}
    \label{eq:expectationOfIndicatorFunctionForApproximation}
	\E{\indicator{\polyfam \in \polyfams[][\Uclique{i-1}]}} 
	= \sum_{\polyfam \in \polyfams[][\Uclique{i}]} \gibbsDistributionFunction{\polyfam}[][\Uclique{i}] \cdot \indicator{\polyfam \in \polyfams[][\Uclique{i-1}]}
	= \sum_{\polyfam \in \polyfams[][\Uclique{i-1}]} \gibbsDistributionFunction{\polyfam}[][\Uclique{i}]
	= \frac{\partitionFunction[][\Uclique{i-1}]}{\partitionFunction[][\Uclique{i}]}
	= \prtFrac[i] .
\end{equation}

We use these observations in order to obtain a randomized approximation of~\partitionFunction (\Cref{algo:appx_prt}) by iteratively, for all $i \in [\numberOfCliques]$, approximating~$\prtFrac[i]$ by sampling from~$\gibbsDistribution[][\Uclique{i}]$.

\begin{algorithm}[t]
 \KwIn{polymer model $\polymerModel = (\polys, \weight, \ncomp)$, polymer clique cover of~\polymerModel with size~$\numberOfCliques$, number of samples $\numSamples \in \N_{> 0}$, sampling error $\err_\numSamples \in (0, 1]$}
 \KwOut{$\err$-approximation of~\partitionFunction[\polymerModel] according to \Cref{lemma:appx_algo}}
 \For{$i \in [\numberOfCliques]$}{
    \For{$j \in [\numSamples]$}{
        $\polyfam^{(j)} \gets \err_\numSamples$-approximate sample from $\gibbsDistribution[][\Uclique{i}]$\;
    }
    $\appxPrtFrac[i] \gets \frac{1}{\numSamples} \sum_{j \in [\numSamples]} \indicator{\polyfam^{(j)} \in \polyfams[][\Uclique{i-1}]}$\;
 }
 $\appxPrtFrac \gets \prod_{i \in [\numberOfCliques]} \appxPrtFrac[i]$\;
 \Return $1/\appxPrtFrac$\;
 \caption{Randomized approximation of the partition function of a polymer model}
 \label{algo:appx_prt}
\end{algorithm}

The following result bounds, for all $\err \in (0, 1]$, the number of samples~$\numSamples$ and the sampling error~$\err_\numSamples$ that are required by \Cref{algo:appx_prt} to obtain an $\err$-approximation of $\partitionFunction$.

\begin{restatable}{lemma}{appxAlgoLemma}
	\label{lemma:appx_algo}
	Let $\polymerModel = (\polys, w, \ncomp)$ be a polymer model, let~\polymerClique be a polymer clique cover of~\polymerModel with size~\numberOfCliques, let $\prt_{\max} = \max_{i \in [\numberOfCliques]} \{\partitionFunction[][\polymerClique[i]]\}$, and let $\err \in (0, 1]$.
	Consider \Cref{algo:appx_prt} for~\polymerModel with $\numSamples = 1 + 125 \prt_{\max} \numberOfCliques/\err^2$ and $\err_{\numSamples} = \err/(5 \prt_{\max} \numberOfCliques)$.
    Then \Cref{algo:appx_prt} returns a randomized $\err$-approximation of $\partitionFunction$.
\end{restatable}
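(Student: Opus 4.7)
The plan is to adapt the classical product-estimator analysis (cf.\ \cite[Chapter~3]{jerrum2003counting}) to the clique-based telescoping $\partitionFunction = \bigl(\prod_{i \in [\numberOfCliques]} \prtFrac[i]\bigr)^{-1}$ established just before \cref{eq:expectationOfIndicatorFunctionForApproximation}. The first preparatory step is to record the deterministic lower bound $\prtFrac[i] \ge 1/\prt_{\max}$, which follows by splitting any $\polyfam \in \polyfams[][\Uclique{i}]$ into its intersection with $\Uclique{i-1}$ and the at-most-one polymer it contributes from $\polymerClique[i]$, giving $\partitionFunction[][\Uclique{i}] \le \partitionFunction[][\Uclique{i-1}] \cdot \partitionFunction[][\polymerClique[i]] \le \prt_{\max} \cdot \partitionFunction[][\Uclique{i-1}]$.

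Next I would separate the two sources of error. Let $p_i \defeq \E{\indicator{\polyfam^{(j)} \in \polyfams[][\Uclique{i-1}]}}$ denote the expected indicator under the $\err_{\numSamples}$-approximate sampler used in \Cref{algo:appx_prt}; then the total-variation guarantee combined with \cref{eq:expectationOfIndicatorFunctionForApproximation} gives $\lvert p_i - \prtFrac[i] \rvert \le \err_{\numSamples}$ and hence $\lvert p_i/\prtFrac[i] - 1 \rvert \le \err_{\numSamples} \prt_{\max} = \err/(5 \numberOfCliques)$ by the first step. Multiplying over $i \in [\numberOfCliques]$ and using Bernoulli's inequality $(1-x)^n \ge 1 - nx$ together with $(1+x)^n \le \eulerE^{nx}$ yields the deterministic bias bound $\prod_{i \in [\numberOfCliques]} p_i \in \bigl[1 - \err/4,\; 1 + \err/4\bigr] \cdot \prod_{i \in [\numberOfCliques]} \prtFrac[i]$ for $\err \in (0,1]$.

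For the stochastic part, I would exploit that all samples $\polyfam^{(j)}$ across different indices $i$ and $j$ are mutually independent, so the $\appxPrtFrac[i]$ are independent with $\E{\appxPrtFrac[i]} = p_i$ and $\Var{\appxPrtFrac[i]} = p_i(1-p_i)/\numSamples$. The bias bound of the previous step combined with the choice $\err_{\numSamples} = \err/(5 \prt_{\max} \numberOfCliques)$ gives the pointwise bound $p_i \ge 4/(5 \prt_{\max})$, and hence $\Var{\appxPrtFrac[i]}/p_i^2 \le 5\prt_{\max}/(4 \numSamples) \le \err^2/(100 \numberOfCliques)$ by the choice of $\numSamples$. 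Independence then yields the product-variance identity
\[
    \frac{\Var{\appxPrtFrac}}{\bigl(\prod_{i \in [\numberOfCliques]} p_i\bigr)^2} \;=\; \prod_{i \in [\numberOfCliques]} \Bigl(1 + \frac{\Var{\appxPrtFrac[i]}}{p_i^2}\Bigr) - 1 \;\le\; \eulerE^{\err^2/100} - 1 \;\le\; \err^2/99 \,.
\]

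Finally I would apply Chebyshev's inequality to obtain $\appxPrtFrac \in [1 - \err/4,\; 1 + \err/4] \cdot \prod_{i \in [\numberOfCliques]} p_i$ with probability at least $1 - 16/99 \ge 3/4$. Chaining with the bias bound of the second step gives $\appxPrtFrac \cdot \partitionFunction \in [1 - \err/2,\; 1 + 9\err/16]$ on the same event, and since $(1+\err)(1-\err/2) \ge 1$ and $(1-\err)(1 + 9\err/16) \le 1$ for $\err \in (0,1]$, this inverts to $1/\appxPrtFrac \in [(1-\err)\partitionFunction,\; (1+\err)\partitionFunction]$; i.e., \Cref{algo:appx_prt} returns a randomized $\err$-approximation. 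The main technical obstacle is not conceptual but arithmetic: the constants $125$ in $\numSamples$ and $5$ in $\err_{\numSamples}$ are calibrated so that the bias contribution and the Chebyshev tail together leave enough slack for the final inversion $1/\appxPrtFrac$ to remain within a factor $1 \pm \err$.
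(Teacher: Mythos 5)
Your proposal is correct and follows essentially the same route as the paper's own proof: the deterministic bounds $1/\prt_{\max} \le \prtFrac[i] \le 1$, a bias bound comparing $\E{\appxPrtFrac[i]}$ to $\prtFrac[i]$ via the total-variation guarantee, a second-moment bound via the product-variance identity for independent estimators, Chebyshev's inequality, and a final combination step. The only differences are cosmetic — you express the bias and concentration windows with linear factors $1 \pm \err/4$ whereas the paper uses exponential factors $\eulerE^{\pm \err/4}$ (and a Chebyshev threshold of $\err/5$) — and your final arithmetic chaining is equally sound.
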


Based on \Cref{algo:appx_prt} and \Cref{lemma:appx_algo}, we now recall our main theorem on the approximation of the partition function of an abstract polymer model.

\approximationPartitionFunction*
%\begin{theorem}
%	\label{thm:appx_partition_function}
%	Let $\polymerModel = (\polys, w, \ncomp)$ be a computationally feasible polymer model, let~\polymerClique be a polymer clique cover of~\polymerModel with size~\numberOfCliques.
%	Assume that~$\polymerModel$ satisfies the conditions of \Cref{thm:sampling}.
%    
%    Then, for all $\err \in (0, 1]$, there is a randomized $\err$-approximation of~\partitionFunction computable in time $\poly{\numberOfCliques/\err}$.
%\end{theorem}

% \textbf{Some general remarks:}
% %\begin{enumerate}[1.]
% %	\item Maybe we want to make all runtime bounds dependent on the size of the clique cover $m$, so we do not need any "encoding size" yet.
% %	\item 
% %		Some conditions appear in both theorems and are quite natural assumptions (e.g, efficiently checking if $\gamma \ncomp \gamma'$). 
% %		Maybe we move them into a separate definition, like "We call a polymer model computationally feasible if [...]".
% %\end{enumerate}
% 
% \todo{Add the mixing condition from the master thesis. Removes points (2) from the theorems above.}

\section{Truncation of Polymer Cliques}\label{sec:truncation}

In \Cref{sec:algo}, we discuss under which assumptions the partition function of a polymer model~\polymerModel with polymer clique cover~\polymerClique of size~\numberOfCliques can be approximated in time polynomial in~\numberOfCliques (\Cref{thm:appx_partition_function}). One of the assumptions requires to be able to sample, for all $i \in [\numberOfCliques]$, from~\gibbsDistribution[][\polymerClique[i]] in time \poly{\numberOfCliques}.
Unfortunately, for many algorithmic problems, the number of polymer families of each polymer clique is large, and efficient sampling from~\gibbsDistribution[][\polymerClique[i]] is non-trivial.
However, as we only require to approximately sample from~\gibbsDistribution[][\polymerClique[i]], it is sufficient to ignore polymer families with low probabilities, that is, with low weight.

We formalize this concept rigorously by defining a size function for polymers.
We aim to remove polymers of large size (low weight), which still yields a sufficient approximation of~\gibbsDistribution[][\polymerClique[i]] (\Cref{lemma:clique_truncation}).
As a consequence, we can still approximate~\partitionFunction[\polymerModel] in time polynomial in~\numberOfCliques (\Cref{thm:sampling_trunc}).

\begin{definition}[Size Function]
    \label{def:size}
    Given a polymer model $(\polys, w, \ncomp)$, a size function is a function $\size{\cdot}: \polys \to \R_{>0}$.
    For a fixed size function $\size{\cdot}$ and some polymer $\gamma \in \polys$, we call $\size{\gamma}$ the \emph{size} of $\gamma$.
\end{definition}

Given a size function, we \emph{truncate} the polymer model to polymers of small size.

\begin{definition}[Truncation]
    \label{def:truncation}
    Let $(\polys, w, \ncomp)$ be a polymer model equipped with a size function~$\size{\cdot}$, and let $\mathcal{B} \subseteq \polys$.
    For all $k \in \R$, we call $\mathcal{B}^{\le k} = \{\gamma \in \mathcal{B} \mid \size{\gamma} \le k\}$ the \emph{truncation} of $\mathcal{B}$ to size $k$.
    Further, we write $\mathcal{B}^{>k} = \mathcal{B} \setminus \mathcal{B}^{\le k}$.
\end{definition}

Note that $\mathcal{B} \subseteq \polys$ and that $\mathcal{B}^{\le k}, \mathcal{B}^{> k}$ is a partitioning of $\mathcal{B}$, which implies $\mathcal{B}^{\le k}, \mathcal{B}^{> k} \subseteq \polys$.
Thus, we can apply our notions of restricted polymer families, partition function, and Gibbs distribution as stated in \Cref{sec:restrictedPolymerModels} to $\mathcal{B}^{\le k}$ and $\mathcal{B}^{> k}$ as well.
The case $\mathcal{B} = \polys$ (i.e., we truncate the entire polymer model) plays a special role, which is why we use the shorter notation $\polyfams_{\le k} = \polyfams_{\polys^{\le k}}$, $\prt_{\le k} = \prt_{\polys^{\le k}}$, and $\gibbs_{\le k} = \gibbs_{\polys^{\le k}}$.
Analogously, we define $\polyfams_{> k}$, $\prt_{> k}$, and~$\gibbs_{> k}$.

\begin{restatable}{lemma}{cliqueTruncationLemma}%[truncation of polymer cliques]
	\label{lemma:clique_truncation}
	Let $\polymerModel = (\polys, w, \ncomp)$ be a polymer model, let~\polymerClique be a polymer clique cover of~\polymerModel with size~\numberOfCliques, and let $\size{\cdot}$ be a size function for~\polymerModel.
	Assume that there is a $k \in \R$ and an $\err \in (0, 1)$ such that, for all $i \in [\numberOfCliques]$, it holds that
	\begin{align}
        \label{eq:truncationLemmaAssumption}
		\sum_{\gamma \in \clique_i^{>k}} w_{\gamma} \le \frac{\err}{m}.
	\end{align}
    
	Then $\eulerE^{- \err} \le \prt_{\le k}/\prt \le 1$ and $\dtv{\gibbs}{\gibbs_{\le k}} \le \err$.
\end{restatable}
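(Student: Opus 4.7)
The plan has three parts: the upper bound $\prt_{\le k}/\prt \le 1$, the lower bound $\prt_{\le k}/\prt \ge \eulerE^{-\err}$, and the total-variation bound, which will follow quickly from the other two.

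The upper bound is immediate: since $\polys^{\le k} \subseteq \polys$, we have $\polyfams_{\le k} \subseteq \polyfams$, and every term in the defining sum of $\prt_{\le k}$ appears (as a nonnegative summand) in $\prt$.

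The main work is the lower bound. First, I would observe that because $\polymerClique$ covers $\polys$, every large polymer lies in at least one $\clique_i^{> k}$, so a union bound together with assumption~\cref{eq:truncationLemmaAssumption} gives
\[
\sum\nolimits_{\polymer \in \polys^{>k}} \weight[\polymer] \;\le\; \sum\nolimits_{i \in [m]} \sum\nolimits_{\polymer \in \clique_i^{>k}} \weight[\polymer] \;\le\; m \cdot \frac{\err}{m} \;=\; \err.
\]
Next, for every family $\polyfam \in \polyfams$, the decomposition $\polyfam = \polyfam_0 \sqcup S$ with $\polyfam_0 = \polyfam \cap \polys^{\le k}$ and $S = \polyfam \cap \polys^{>k}$ is a bijection between $\polyfams$ and the set of pairs $(\polyfam_0, S)$ with $\polyfam_0 \in \polyfams_{\le k}$, $S$ a finite pairwise-compatible subset of $\polys^{>k}$, and $\polyfam_0 \cup S$ pairwise compatible. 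Dropping the two compatibility conditions on $S$ (an upper bound since weights are positive) and factoring the resulting double sum, I would obtain
\[
\prt \;\le\; \prt_{\le k} \cdot \!\!\sum_{\substack{S \subseteq \polys^{>k} \\ |S| < \infty}} \prod_{\polymer \in S} \weight[\polymer] \;=\; \prt_{\le k} \cdot \!\!\prod_{\polymer \in \polys^{>k}}\!\!(1 + \weight[\polymer]) \;\le\; \prt_{\le k} \cdot \eulerE^{\sum_{\polymer \in \polys^{>k}} \weight[\polymer]} \;\le\; \prt_{\le k} \cdot \eulerE^{\err},
\]
where I used $1 + x \le \eulerE^x$ and the sum-product identity, which is valid because the weight sum is finite. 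Rearranging yields $\prt_{\le k}/\prt \ge \eulerE^{-\err}$.

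For the total-variation bound, I would split the sum defining $\dtv{\gibbs}{\gibbs_{\le k}}$ according to whether $\polyfam \in \polyfams_{\le k}$ or not. On $\polyfams_{\le k}$, the densities differ by a factor $(\prt^{-1}_{\le k} - \prt^{-1})$ which is nonnegative by the upper bound, so the contribution summed telescopes to $1 - \prt_{\le k}/\prt$. Off $\polyfams_{\le k}$, $\gibbs_{\le k}$ vanishes, and the contribution is $(\prt - \prt_{\le k})/\prt = 1 - \prt_{\le k}/\prt$. The factor of $1/2$ in the total-variation distance then leaves exactly $1 - \prt_{\le k}/\prt$, which the lower bound and $1 - \eulerE^{-\err} \le \err$ bound by $\err$.

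The only nontrivial step is the lower bound on $\prt_{\le k}/\prt$; the main subtlety there is that $\polys^{>k}$ may be countably infinite, but the bound $\sum_{\polymer \in \polys^{>k}} \weight[\polymer] \le \err$ ensures absolute convergence of $\prod_{\polymer \in \polys^{>k}} (1 + \weight[\polymer])$ and validates the sum-product identity restricted to finite subsets (and polymer families are finite by definition).
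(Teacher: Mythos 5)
Your proof is correct, and the high-level strategy matches the paper's exactly: bound $\prt/\prt_{\le k}$ above by $\eulerE^{\err}$, then convert to the total-variation bound via $\dtv{\gibbs}{\gibbs_{\le k}} = 1 - \prt_{\le k}/\prt \le 1 - \eulerE^{-\err} \le \err$. The one genuine (if small) difference is in how you control the extra factor. The paper writes $\prt \le \prt_{\le k}\,\prt_{>k}$ and then exploits the clique structure a second time: each family in $\polyfams_{>k}$ has at most one polymer per $\clique_i^{>k}$, which gives $\prt_{>k} \le \prod_{i}\bigl(1 + \sum_{\gamma \in \clique_i^{>k}} w_\gamma\bigr) \le (1 + \err/m)^m \le \eulerE^{\err}$. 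You instead drop compatibility entirely, pass to $\prod_{\gamma \in \polys^{>k}}(1 + w_\gamma)$, and use the union bound $\sum_{\gamma \in \polys^{>k}} w_\gamma \le \err$ together with $1+x \le \eulerE^x$. Your route is a little more elementary (the clique cover only enters via the union bound), at the cost of a coarser intermediate estimate; the paper's route stays a hair tighter term by term but lands on the same $\eulerE^{\err}$. Both are perfectly valid, and your handling of the countably-infinite-product issue at the end is a reasonable thing to flag even though the argument never actually needs the infinite product to converge in any delicate sense.
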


% Similar to how we required the \pmc for \Cref{thm:appx_partition_function}, we formalize the following condition for a polymer model with a size function.

%\begin{condition}[clique truncation]
%    \label{def:ctc}
%    Let $\polymerModel = (\polys, w, \ncomp)$ be a polymer model, let~\polymerClique be a polymer clique cover of~\polymerModel with size~\numberOfCliques, and let $\size{\cdot}$ be a size function for~\polymerModel.
%    For all $i \in [\numberOfCliques]$, we say that~$\clique_i$ \emph{satisfies the \ctc{}} for a monotonically increasing, invertible function $g\colon \R \to \R_{>0}$ and a bound $B \in \R_{> 0}$ if and only if
%    \[
%        \sum_{\gamma \in \clique_i} g(\size{\gamma}) w_{\gamma} \le B.
%    \]
%\end{condition}

Recall the \ctc (\Cref{def:ctc}). If the \ctc is satisfied, by choosing a reasonable value~$k$ for truncating a polymer model, only little overall weight is removed.
That is, the truncated model represents a good approximation of the original.

\begin{restatable}{lemma}{ctcLemma}
    \label{lemma:ctc}
    Let $\polymerModel = (\polys, w, \ncomp)$ be a polymer model, let~\polymerClique be a polymer clique cover of~\polymerModel with size~\numberOfCliques, let $\size{\cdot}$ be a size function for~\polymerModel, and let $i \in [\numberOfCliques]$.
    Assume that~$\clique_i$ satisfies the \ctc{} for a function~$g$ and a bound~$B$.
    
    Then, for all $\err' \in (0, 1)$ and all $k \ge g^{-1} \left( B/\err' \right)$, it holds that
    \[
    	\sum_{\gamma \in \clique_i^{>k}} w_{\gamma} \le \err' .
    \]
\end{restatable}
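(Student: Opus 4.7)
The statement is an essentially one-line consequence of the clique truncation condition, once the monotonicity and invertibility of $g$ are used correctly. The plan is to exploit the fact that, on $\clique_i^{>k}$, the size function is bounded below by $k$, which — after passing through $g$ — lets us turn the CTC bound on the weighted sum of $g(\size{\gamma})$ into a bound on the sum of weights alone.

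Concretely, I would proceed in four small steps. First, observe that by definition of the truncation, every $\gamma \in \clique_i^{>k}$ satisfies $\size{\gamma} > k$, and since $g$ is monotonically increasing, $g(\size{\gamma}) \ge g(k)$. Second, use this to write
\[
\sum_{\gamma \in \clique_i^{>k}} w_\gamma
\;\le\; \frac{1}{g(k)} \sum_{\gamma \in \clique_i^{>k}} g(\size{\gamma}) w_\gamma
\;\le\; \frac{1}{g(k)} \sum_{\gamma \in \clique_i} g(\size{\gamma}) w_\gamma
\;\le\; \frac{B}{g(k)},
\]
where the last step is exactly the \ctc{} for $\clique_i$ with function $g$ and bound $B$. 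Third, apply monotonicity of $g$ once more: since $k \ge g^{-1}(B/\err')$ and $g$ is monotonically increasing and invertible, we obtain $g(k) \ge g\bigl(g^{-1}(B/\err')\bigr) = B/\err'$. Fourth, combining the last two displays yields $\sum_{\gamma \in \clique_i^{>k}} w_\gamma \le B/g(k) \le \err'$, which is the claim.

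There is no real obstacle here; the only thing to be careful about is the direction of the monotonicity of $g$ (and hence of $g^{-1}$), which I use twice — once to extract the factor $1/g(k)$ from the truncated sum, and once to pass from the hypothesis $k \ge g^{-1}(B/\err')$ to the bound $g(k) \ge B/\err'$. Both are immediate from the assumption that $g$ is monotonically increasing and invertible as stated in \Cref{def:ctc}.
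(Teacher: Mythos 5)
Your proof is correct and follows essentially the same route as the paper's: both start from $g(\size{\gamma}) \ge g(k)$ on $\clique_i^{>k}$, chain through the \ctc{} bound $B$, and then use $k \ge g^{-1}(B/\err')$ with monotonicity to conclude $g(k) \ge B/\err'$. If anything, your handling of the final step is slightly cleaner, since you explicitly note $g(k) \ge B/\err'$ for all admissible $k$ rather than just substituting $k = g^{-1}(B/\err')$.
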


As a direct consequence of \Cref{lemma:ctc}, we get that the partition function of the truncated model is a useful approximation of the original partition function.
Combining \Cref{lemma:clique_truncation,lemma:ctc} and choosing $\err' = \err/m$ directly implies the following result.

\begin{corollary}
    \label{lemma:model_truncation}
    Let $\polymerModel = (\polys, w, \ncomp)$ be a polymer model, let~\polymerClique be a polymer clique cover of~\polymerModel with size~\numberOfCliques, and let $\size{\cdot}$ be a size function for~\polymerModel.
    Assume that there is a $g\colon \R \to \R_{>0}$ and a $B \in \R_{> 0}$ such that, for $i \in [\numberOfCliques]$, the polymer clique~$\polymerClique[i]$ satisfies the \ctc{} for~$g$ and~$B$.
    
    Then, for all $\err \in (0, 1)$ and all $k \ge g^{-1} \left( B m/\err \right)$, it holds that $\eulerE^{- \err} \le \prt_{\le k}/\prt \le 1$ and $\dtv{\gibbs}{\gibbs_{\le k}} \le \err$.
\end{corollary}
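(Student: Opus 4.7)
The plan is to derive the corollary as a direct chaining of the two preceding lemmas, namely \Cref{lemma:clique_truncation} and \Cref{lemma:ctc}, by matching the error parameters appropriately. The key observation is that \Cref{lemma:clique_truncation} requires the per-clique tail bound $\sum_{\gamma \in \clique_i^{>k}} w_{\gamma} \le \err/m$ for every $i \in [\numberOfCliques]$, while \Cref{lemma:ctc} provides, under the \ctc{}, a per-clique tail bound of the form $\sum_{\gamma \in \clique_i^{>k}} w_{\gamma} \le \err'$ whenever $k \ge g^{-1}(B/\err')$. Therefore the natural coupling of parameters is $\err' = \err/m$.

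First I would fix an arbitrary $\err \in (0,1)$ and an arbitrary $k \ge g^{-1}(Bm/\err)$, as in the statement. Setting $\err' = \err/m \in (0,1)$, the hypothesis on $k$ rewrites as $k \ge g^{-1}(B/\err')$, which is precisely the threshold required by \Cref{lemma:ctc}. Since each clique $\polymerClique[i]$ satisfies the \ctc{} for the same $g$ and $B$ by assumption, applying \Cref{lemma:ctc} uniformly to every $i \in [\numberOfCliques]$ yields
\[
    \sum_{\gamma \in \clique_i^{>k}} w_{\gamma} \le \err' = \frac{\err}{m}.
\]

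Second, I would feed this bound into \Cref{lemma:clique_truncation}. The hypothesis \eqref{eq:truncationLemmaAssumption} of that lemma is satisfied for the chosen $k$ and the same $\err$ by the previous display, so the lemma directly gives both conclusions $\eulerE^{-\err} \le \prt_{\le k}/\prt \le 1$ and $\dtv{\gibbs}{\gibbs_{\le k}} \le \err$, finishing the proof.

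I do not expect any serious obstacle here; the whole argument is a bookkeeping exercise in aligning $\err'$ with $\err/m$ so that the per-clique weight-tail bound produced by the \ctc{} matches the uniform tail-bound hypothesis of \Cref{lemma:clique_truncation}. The only point that deserves an explicit sentence in the write-up is the monotonicity of $g$ (and hence of $g^{-1}$), which ensures that the threshold $k \ge g^{-1}(Bm/\err)$ automatically upgrades the conclusion of \Cref{lemma:ctc} to every $i$ simultaneously.
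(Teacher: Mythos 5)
Your proposal is correct and is exactly the paper's own argument: the paper states the corollary as a direct consequence of \Cref{lemma:clique_truncation,lemma:ctc} with the choice $\err' = \err/m$. The only cosmetic quibble is your closing remark about monotonicity of $g$: with $\err' = \err/m$ the threshold $g^{-1}(B/\err')$ is literally $g^{-1}(Bm/\err)$, so no monotonicity argument is needed to compare thresholds, and \Cref{lemma:ctc} already applies per clique under the stated hypothesis; monotonicity is used only inside the proof of \Cref{lemma:ctc} itself.
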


Using the truncated polymer model, we achieve an $\err$-approximation result of the partition function of the original model that is computable in time $\poly{\numberOfCliques/\err}$, similar to \Cref{thm:appx_partition_function}.

\begin{restatable}{theorem}{samplingTruncTheorem}
	\label{thm:sampling_trunc}
    Let $\polymerModel = (\polys, w, \ncomp)$ be a computationally feasible polymer model, let~\polymerClique be a polymer clique cover of~\polymerModel with size~\numberOfCliques, and let $\size{\cdot}$ be a size function for~\polymerModel.
    Further, let $\prt_{\max} = \max_{i \in [\numberOfCliques]} \{\partitionFunction[][\clique_i]\}$, and let $t(k)$ denote an upper bound, for all $i \in [\numberOfCliques]$, on the time to enumerate~$\clique_i^{\le k}$.
    Last, assume that
    \begin{enumerate}[(a)]
        \item \label[assume]{thm:samplingTrunc:prt}
            $\prt_{\max} \in \poly{\numberOfCliques}$,
        \item \label[assume]{thm:samplingTrunc:pmc}
            \polymerModel satisfies the \pmc for a function $f$ such that, for all $\polymer \in \polys$, it holds that $\eulerE^{-\poly{\numberOfCliques}} \le f(\gamma) \le \eulerE^{\poly{\numberOfCliques}}$, and that
        \item \label[assume]{thm:samplingTrunc:ctc}
            there are $g\colon \R \to \R_{> 0}$ and $B \in \R_{> 0}$ with $B \in \poly{\numberOfCliques}$ and $t(g^{-1}(x)) \in \poly{x}$ (for all $x \in \R_{> 0}$) such that, for all $i \in [\numberOfCliques]$, it holds that~$\clique_i$ satisfies the \ctc{}.
    \end{enumerate}

    Then, for all $\err \in (0, 1]$, we can $\err$-approximately sample from~$\gibbs$ in time $\poly{\numberOfCliques/\err}$, and there is a randomized $\err$-approximation of~$\prt$ computable in $\poly{\numberOfCliques/\err}$.
\end{restatable}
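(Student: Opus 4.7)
The plan is to reduce \Cref{thm:sampling_trunc} to the non-truncated algorithmic results \Cref{thm:sampling,thm:appx_partition_function} by passing to a suitably truncated polymer model, using the machinery of \Cref{lemma:model_truncation}. Throughout, I fix $\err' = \err/3$ and set $k = g^{-1}(B\numberOfCliques/\err')$. By \cref{thm:samplingTrunc:ctc}, $B\numberOfCliques/\err' \in \poly{\numberOfCliques/\err}$, hence $t(k) \in \poly{\numberOfCliques/\err}$, so each truncated clique $\clique_i^{\le k}$ can be enumerated in polynomial time. Let $\polymerModel^{\le k} = (\polys^{\le k}, w|_{\polys^{\le k}}, \ncomp|_{\polys^{\le k}})$ denote the truncated polymer model. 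Its cliques $\{\clique_i^{\le k}\}_{i \in [\numberOfCliques]}$ form a valid polymer clique cover, since pairwise incompatibility is inherited from~\polymerClique and $\bigcup_i \clique_i^{\le k} = \polys^{\le k}$. \Cref{lemma:model_truncation} then yields $\eulerE^{-\err'} \le \prt_{\le k}/\prt \le 1$ and $\dtv{\gibbs}{\gibbs_{\le k}} \le \err'$.

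Next, I would verify that $\polymerModel^{\le k}$ satisfies the hypotheses of \Cref{thm:sampling}. The \pmc for the original function~$f$ is preserved under restriction, because constraining the summation index in \Cref{def:pmc} to $\polys^{\le k} \subseteq \polys$ can only decrease the left-hand side while leaving the right-hand side unchanged; the bounds on~$f$ carry over verbatim. The restricted $\partitionFunction[][\clique_i^{\le k}]$ is at most $\partitionFunction[][\clique_i]$, so the new $\prt_{\max}$ remains in $\poly{\numberOfCliques}$. To sample from the restricted Gibbs distribution on $\clique_i^{\le k}$, enumerate the clique in time~$t(k)$, compute each weight via the computational feasibility of~\polymerModel, form the normalizer $1 + \sum_{\gamma \in \clique_i^{\le k}} w_\gamma$, and draw categorically. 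All other computational-feasibility operations (membership, incompatibility, weight computation, basic set operations) are inherited from~\polymerModel, together with the ability to check $\size{\gamma} \le k$ on enumerated polymers.

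Combining the pieces: applying \Cref{thm:sampling} to $\polymerModel^{\le k}$ with error $2\err/3$ yields, in time $\poly{\numberOfCliques/\err}$, a sampler whose output has total-variation distance at most $2\err/3$ from $\gibbs_{\le k}$; the triangle inequality together with $\dtv{\gibbs}{\gibbs_{\le k}} \le \err/3$ then gives the claimed $\err$-approximate sampler for~\gibbs. For the partition-function statement, applying \Cref{thm:appx_partition_function} to $\polymerModel^{\le k}$ with error $\err/3$ produces, in time $\poly{\numberOfCliques/\err}$, a randomized $(\err/3)$-approximation $\widetilde{Z}$ of $\prt_{\le k}$; combining with $\eulerE^{-\err/3} \prt \le \prt_{\le k} \le \prt$ and elementary bounds for $\err \in (0, 1]$ shows that $\widetilde{Z}$ is a randomized $\err$-approximation of~\prt.

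The main obstacle is really only careful bookkeeping: threading the three sources of error (truncation, approximate sampling inside \Cref{thm:sampling}, and the statistical $\err/3$ from clique-based self-reducibility) through the final inequalities without loss, and confirming that the runtime stays in $\poly{\numberOfCliques/\err}$. Since \Cref{lemma:model_truncation,thm:sampling,thm:appx_partition_function} do the substantive work and $t(g^{-1}(x)) \in \poly{x}$ by hypothesis, no new technical difficulty is expected.
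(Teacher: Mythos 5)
Your proposal is correct and follows essentially the same route as the paper: truncate via \Cref{lemma:model_truncation}, observe that the \pmc, the $\prt_{\max}$ bound, and computational feasibility are inherited by the truncated model, sample from each truncated clique by enumeration in time $t(k)$, and recombine errors by the triangle inequality (and an elementary exponential bound for the partition-function part). The only cosmetic differences are the constant in the error split ($\err/3$ vs.\ the paper's $\err/2$) and that the paper constructs and runs the truncated chain $\markov_k$ directly for the sampling claim rather than citing \Cref{thm:sampling} as a black box; either way one should note, as the paper does, that per-step sampling time is $t(k) \in \poly{\numberOfCliques/\err}$ rather than the literal $\poly{\numberOfCliques}$ demanded by \cref{thm:sampling:inner}, which is harmless since the final bound is $\poly{\numberOfCliques/\err}$ anyway.
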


%Analogously to \Cref{remark:spmc}, it might be more convenient to replace \cref{thm:samplingTrunc:prt} and \cref{{thm:samplingTrunc:pmc}} with the slightly more restrictive condition from \cref{eq:spmc}. \todo{Marcus: I am not yet happy with that}
Note that \Cref{remark:spmc} applies to \Cref{thm:sampling_trunc} as well.
That is, by using more restrictive assumptions, assumptions~(\ref{thm:samplingTrunc:prt}) and~(\ref{thm:samplingTrunc:pmc}) are satisfied.

The results from \Cref{table:bounds} were obtained by using \Cref{thm:sampling_trunc} together with \Cref{remark:spmc}.
To demonstrate how the bounds are calculated, we showcase this for the hard-core model on bipartite expanders in \Cref{appendix:expanders}. 
Moreover, we discuss there how to choose the function~$f$ for the \pmc and describe how the other results in \Cref{table:bounds} are obtained.

\section*{Acknowledgments}

This work was supported by the Paris Île-de-France Region via the European Union’s Horizon 2020 research and innovation program under the Marie Skłodowska-Curie grant agreement No. 945298-ParisRegionFP.

\newpage
\bibliography{references.bib}

\newpage
\appendix

\section{\namedAppendix{Coupling Lemma}} \label{appendix:coupling_lemma}
We discuss \Cref{lemma:exp_potential} in detail. First, we explain why the assumptions of the original theorem by Greenberg et~al.~\cite[Theorem~$3.3$]{GreenbergPR09IncorrectSupermartingaleDrift} are insufficient. With \Cref{ex:counterexampleToCouplingTheorem}, we provide a counterexample. Last, we prove our version of the theorem.

% \subsection*{Why the Original Theorem Fails}

Besides some minor generalizations, the most important difference between \Cref{lemma:exp_potential} and Theorem~$3.3$ by Greenberg et~al.~\cite{GreenbergPR09IncorrectSupermartingaleDrift} is that we assume the coupling to be defined for \emph{all} pairs of states.
We also require the expected change of~$\delta$ as well as the probability bound to hold for \emph{all} pairs of states.
In contrast, Greenberg et~al.~\cite{GreenbergPR09IncorrectSupermartingaleDrift} claim that it is sufficient if these properties hold for neighboring states with respect to some adjacency structure.
In what follows, we argue that this does not always suffice.

It is well known that couplings on adjacent states can be extended to all pairs of states such that the expected decrease of $\delta$ for adjacent states implies an expected decrease for all pairs of states~\cite{DyerG98MultiplicativeCoupling}.
However, a similar argument does not necessarily hold for bounds on the probability that $\delta$ changes by at least a certain amount.
More precisely, it is possible to construct a Markov chain and a coupling such that
\[
\Pr{|\delta(X_{t+1}, Y_{t+1}) - \delta(x, y)| \ge \eta \delta(x, y)}[X_t = x, Y_t = y] \ge \kappa
\] 
holds for all pairs of adjacent states $x, y \in \Omega$ but not for all pairs of non-adjacent states.

Thus, Theorem~$3.3$ by Greenberg et~al.~\cite{GreenbergPR09IncorrectSupermartingaleDrift} can be used to deduce upper bounds for mixing times that contradict known lower bounds.
We demonstrate this by giving a simple counterexample (\Cref{ex:counterexampleToCouplingTheorem}).
Using Theorem~$3.3$ by Greenberg et~al.~\cite{GreenbergPR09IncorrectSupermartingaleDrift}, we bound the mixing time of a symmetric random walk on a cycle of size~$n$ by $\bigO{\ln (n)^2 \ln \left( 1/\err \right)}$.
This contradicts the lower bound of $\bigOmega{n \ln \left( 1/\err \right)}$ that results from the diameter of the state space \cite[Chapter $7.1.2$]{levin2017markov}.

\begin{example}
	\label{ex:counterexampleToCouplingTheorem}
	We consider a symmetric random walk on a cycle of length $n \in \N_{> 2}$ (i.e., $\Omega = \{0\} \cup [n-1]$).
	In what follows, let all $+1$ and $-1$ operations on the state space be defined modulo~$n$.
	In order to have the desired self-loop probability, we define the transitions~$P$, for all $x \in \Omega$, by $P(x, x) = 1/2$ and $P(x, x+1) = P(x, x-1) = 1/4$.
	
	We say two states $x, y \in \Omega$ with $x \neq y$ are adjacent if and only if $x = y + 1$ or $x = y - 1$.
	Further, we define $\delta$ to be the shortest-path distance in the cycle.
	%	This means we start with $\delta(x, x) = 0$ and $\delta(x, y) = 1$ for adjacent states $x, y$, and we extend this to a metric on all pairs of states via shortest paths.
	Note that, for all $x, y \in \Omega$ with $x \neq y$, it holds that $\delta(x, y) \in \left[ 1, \left\lfloor n/2 \right\rfloor \right]$.
	
	Let $(X_t)_{t \in \N}$ and $(Y_t)_{t \in \N}$ be two copies of the chain $(\Omega, P)$, and let $x, y \in \Omega$ be adjacent.
	Without loss of	generality, assume $x = y + 1$.
	For $X_t = x, Y_t = y$ we construct the following coupling:
	\begin{itemize}
		\item With probability $1/4$, choose $X_{t+1} = x$ and $Y_{t+1} = x$, resulting in $\delta(X_{t+1}, Y_{t+1}) = 0$.
		\item With probability $1/4$, choose $X_{t+1} = y$ and $Y_{t+1} = y$, resulting in $\delta(X_{t+1}, Y_{t+1}) = 0$.
		\item With probability $1/4$, choose $X_{t+1} = x$ and $Y_{t+1} = y$, resulting in $\delta(X_{t+1}, Y_{t+1}) = 1$.
		\item With the remaining probability of $1/4$, choose $X_{t+1} = x + 1$ and $Y_{t+1} = y - 1$, resulting in $\delta(X_{t+1}, Y_{t+1}) = 3$ .
	\end{itemize}
	Note that $\E{\delta(X_{t+1}, Y_{t+1})}[X_t = x, Y_t = y] = \delta(x, y)$.
	
	For $\eta = 0.999$ and $\kappa = 3/4$, it holds that
	\[
	\Pr{|\delta(X_{t+1}, Y_{t+1}) - \delta(x, y)| \ge \eta \delta(x, y)}[X_t = x, Y_t = y] \ge \kappa .	
	\]
	Theorem~$3.3$ by Greenberg et~al.~\cite{GreenbergPR09IncorrectSupermartingaleDrift} then yields a mixing time bound of $\bigO{\ln(n)^2 \ln(1/\err)}$, which contradicts the linear lower bound stated by Levin and Peres~\cite[Chapter $7.1.2$]{levin2017markov}.
\end{example}

Note that \Cref{ex:counterexampleToCouplingTheorem} is not a counterexample for \Cref{lemma:exp_potential}, as there are, for all $\eta \in \smallOmega{1/n}$, non-adjacent states $x, y \in \Omega$ with
\[
\Pr{|\delta(X_{t+1}, Y_{t+1}) - \delta(x, y)| \ge \eta \delta(x, y)}[X_t = x, Y_t = y] = 0.
\]

\subsection*{Our Version of the Theorem}

We closely follow the proof of Greenberg et~al.~\cite{GreenbergPR09IncorrectSupermartingaleDrift}. Central to this is the following theorem, which we present in a slightly different fashion than Greenberg et~al.~\cite[Lemma 3.5]{greenberg2017sampling}.

\begin{theorem}%[{\cite[Lemma 3.5]{greenberg2017sampling}}]
	\label{lemma:greenberg_stopping}
	Let $d, D \in \R$ with $d \leq D$, let $(S_t)_{t \in \N}$ be a stochastic process adapted to a filtration $(\filtration[t])_{t \in \N}$, and let~$T$ be a stopping time with respect to~$\filtration$.
	Assume that, for all $t \in \N$, it holds that $S_{\min \{t, T\}} \in [d, D]$, that
	\begin{align}
		\label{eq:multiplicativeDriftCondition}
		\E{S_{t+1}}[\filtration[t]] \cdot\indicator{t < T} \le S_t \cdot\indicator{t < T},
	\end{align}
	and that there is a $Q \in \R_{> 0}$ such that
	\begin{align}
		\label{eq:varianceBound}
		\E{(S_{t+1} - S_t)^2}[\filtration[t]] \cdot\indicator{t < T} \ge Q \cdot\indicator{t < T}.
	\end{align}
	
	Then
	\begin{align*}
		\E{T}[\filtration[0]] &\le \frac{\E{(D - S_T)^2}[\filtration[0]] - (D - S_0)^2}{Q} .
	\end{align*}
\end{theorem}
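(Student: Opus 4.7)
The plan is to convert the supermartingale-with-variance hypothesis into a positive drift on the bounded squared deficit process, then sum and take limits in the spirit of optional stopping. Concretely, I would set $M_t = D - S_t$ and let $\widetilde{M}_t = M_{t \wedge T}$, which lies in $[0, D - d]$ because of the stated boundedness of $S_{t \wedge T}$. The supermartingale inequality on $S_t$ (valid while $t < T$) then translates into the submartingale inequality $\E{\widetilde{M}_{t+1}}[\filtration[t]] \geq \widetilde{M}_t$, which holds unconditionally after stopping.

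Next I would compute $\E{\widetilde{M}_{t+1}^2 - \widetilde{M}_t^2}[\filtration[t]]$ via the expansion $(a+b)^2 - a^2 = 2ab + b^2$ with $a = \widetilde{M}_t$ and $b = \widetilde{M}_{t+1} - \widetilde{M}_t$. On $\{t < T\}$, the cross term is non-negative since $\widetilde{M}_t \geq 0$ and $\E{b}[\filtration[t]] \geq 0$, while the squared term equals $\E{(S_{t+1} - S_t)^2}[\filtration[t]] \geq Q$ by the variance assumption. On $\{t \geq T\}$ the stopped increment vanishes. Combining the two cases gives
\[
\E{\widetilde{M}_{t+1}^2 - \widetilde{M}_t^2}[\filtration[t]] \geq Q \cdot \indicator{t < T}.
\]

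Telescoping from $t = 0$ to $t = n-1$ under $\filtration[0]$ and using $\sum_{t=0}^{n-1} \indicator{t < T} = \min(n, T)$, I would obtain
\[
\E{\widetilde{M}_n^2}[\filtration[0]] - (D - S_0)^2 \geq Q \cdot \E{\min(n, T)}[\filtration[0]].
\]
Since $\widetilde{M}_n^2 \leq (D - d)^2$ deterministically, the left-hand side is bounded in $n$, so monotone convergence on the right first yields $\E{T}[\filtration[0]] \leq (D - d)^2 / Q < \infty$; in particular $T$ is finite almost surely, hence $\widetilde{M}_n \to M_T$ a.s. Passing $n \to \infty$ by dominated convergence on the left and monotone convergence on the right then gives
\[
\E{(D - S_T)^2}[\filtration[0]] - (D - S_0)^2 \geq Q \cdot \E{T}[\filtration[0]],
\]
which rearranges to the claim after dividing by $Q$.

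The main obstacle I expect is making the optional-stopping step honest: one cannot directly replace $\widetilde{M}_n$ by $M_T$ and $\min(n, T)$ by $T$ in the telescoped inequality without first knowing that $T$ is integrable. The approach above circumvents this by using the deterministic bound $(D-d)^2$ on $\widetilde{M}_n^2$ to establish integrability and almost-sure finiteness of $T$ before taking any limit, and only then invoking dominated convergence. This is also precisely what forces the hypothesis to give boundedness of the \emph{stopped} process $S_{t \wedge T}$, and not merely of $S_t$ on $\{t < T\}$.
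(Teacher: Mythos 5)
Your proof is correct. The algebraic core coincides with the paper's: both hinge on expanding $\E{(D-S_{t+1})^2}[\filtration[t]]$ into the squared-increment term (bounded below by $Q$ via~\cref{eq:varianceBound}) plus a cross term that is non-negative because $D - S_t \geq 0$ and $S$ is a conditional supermartingale while running. But the packaging is genuinely different. The paper constructs the auxiliary process $Z_t = \bigl((D-S_t)^2 - Qt - M\bigr)\cdot\indicator{t \le T}$, verifies it is a submartingale, and then invokes the optional-stopping theorem for uniformly integrable submartingales. You instead work with the stopped deficit $\widetilde{M}_t = D - S_{t \wedge T}$, derive the one-step drift inequality $\E{\widetilde{M}_{t+1}^2 - \widetilde{M}_t^2}[\filtration[t]] \geq Q\cdot\indicator{t<T}$, telescope under $\filtration[0]$ to get $\E{\widetilde{M}_n^2}[\filtration[0]] - (D-S_0)^2 \geq Q\,\E{\min(n,T)}[\filtration[0]]$, and then pass $n\to\infty$ by monotone convergence (on the right) and dominated convergence (on the left, legitimized by the uniform bound $\widetilde{M}_n^2 \le (D-d)^2$). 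What your route buys is self-containment: $\widetilde{M}_n^2$ is deterministically bounded by $(D-d)^2$, so almost-sure finiteness of $T$ and all the limiting steps follow from elementary convergence theorems, without appealing to uniform integrability of a process that carries an explicit linear-in-$t$ drift term. What the paper's route buys is brevity by delegating the limiting argument to a black-box optional-stopping theorem. Your closing remark correctly identifies why boundedness of the \emph{stopped} process, rather than of $S_t$ merely on $\{t<T\}$, is the hypothesis that makes the convergence arguments go through.
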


Before we prove the theorem, we discuss the changes we made in the phrasing of the theorem.
Different to the original theorem by Greenberg et~al.~\cite[Lemma 3.5]{greenberg2017sampling}, we include a filtration and we use indicator functions. Our reasons are as follows. The proof of \Cref{lemma:greenberg_stopping} aims to apply the optional-stopping theorem for submartingales. A submartingale is, by definition, a stochastic process $(Z_t)_{t \in \N}$ adapted to a filtration $(\filtration[t])_{t \in \N}$ such that, for all $t \in \N$, the expectation of~$Z_t$ is finite and $\E{Z_{t + 1}}[\filtration[t]] \geq Z_t$. It is important to note that the expectation $\E{Z_{t + 1}}[\filtration[t]]$ is itself a random variable and that the inequality $\E{Z_{t + 1}}[\filtration[t]] \geq Z_t$ is stronger than $\E{Z_{t + 1}} \geq \E{Z_t}$ (which follows by the law of total expectation). Hence, we require a filtration.

Second, the indicator functions make sure that \cref{eq:multiplicativeDriftCondition,eq:varianceBound} (and the boundedness of~$S$) only have to hold as long as~$S$ did not stop. Afterward, they are trivially satisfied. This is important, as~$S$ is bounded from below by~$d$ and its conditional expectation does not increase. Assume that we did not use indicator functions. If there is a $t \in \N$ such that $S_t = d$, then $S_{t + 1} = d$ holds as well, as otherwise the inequality $\E{S_{t + 1}}[\filtration[t]] \leq S_t$ does not hold. However, this implies that $\E{(S_{t + 1} - S_t)^2}[\filtration[t]] = 0$ (since the process is now almost surely deterministic), which violates \cref{eq:varianceBound} if not for the indicator functions.

Note that our additional assumptions in \Cref{lemma:greenberg_stopping} only fix issues in the proof of Greenberg et~al.~\cite[Lemma 3.5]{greenberg2017sampling}. The proof itself remains mostly unchanged.

\begin{proof}[Proof of \Cref{lemma:greenberg_stopping}]
    Let $M \in \R_{\geq 0}$ such that, for all $t \in \N$, it holds that $M \geq (D - S_t)^2 \cdot \indicator{t \leq T}$.
    Note that such an~$M$ exists, as, for all $t \in \N$, by assumption, $S_{\min \{t, T\}}$ is bounded.
    For all $t \in \N$, let $Z_t = \big((D - S_t)^2 - Qt - M\big) \cdot \indicator{t \leq T}$.
    Note that, for all $t \in \N$, due to the definition of~$M$ and due to $\indicator{t < T} \leq \indicator{t \leq T}$, it holds that
    \begin{equation}
        \label{eq:greenberg_stopping_final_step}
        \big((D - S_t)^2 - Qt - M\big) \cdot \indicator{t < T} \geq \big((D - S_t)^2 - Qt - M\big) \cdot \indicator{t \leq T} = Z_t.
    \end{equation}
    
    We show that~$Z$ is a submartingale with respect to~$\filtration$.
    Let $t \in \N$.
    By noting that $\indicator{t + 1 \leq T} = \indicator{t < T}$, since~$T$ is integer, and by applying \cref{eq:multiplicativeDriftCondition,eq:varianceBound,eq:greenberg_stopping_final_step}, we get
    \begin{align*}
        \E{Z_{t + 1}}[\filtration[t]] &= \E{\big((D - S_{t + 1})^2 - Q(t + 1) - M\big) \cdot \indicator{t + 1 \leq T}}[\filtration[t]]\\
        &= \E{\big((D - S_{t + 1})^2 - Q(t + 1) - M\big) \cdot \indicator{t < T}}[\filtration[t]]\\
        &= \big(D^2 - 2D\E{S_{t + 1}}[\filtration[t]] + \E{S_{t + 1}^2}[\filtration[t]] - Q - Qt - M\big) \cdot \indicator{t < T}\\
        &\overset{\clap{\scriptsize\eqref{eq:varianceBound}}}{\geq} \big(D^2 - 2D\E{S_{t + 1}}[\filtration[t]] + 2S_t\E{S_{t + 1}}[\filtration[t]] - S_t^2 - Qt - M\big) \cdot \indicator{t < T}\\
        &= \big(D^2 - 2\E{S_{t + 1}}[\filtration[t]](D - S_t) - S_t^2 - Qt - M\big) \cdot \indicator{t < T}\\
        &\overset{\clap{\scriptsize\eqref{eq:multiplicativeDriftCondition}}}{\geq} \big(D^2 - 2S_t(D - S_t) - S_t^2 - Qt - M\big) \cdot \indicator{t < T}\\
        &= \big((D - S_t)^2 - Qt - M\big) \cdot \indicator{t < T}\\
        &\overset{\clap{\scriptsize\eqref{eq:greenberg_stopping_final_step}}}{\geq} Z_t.
    \end{align*}
    
    Since~$Z$ is bounded, it is uniformly integrable~\cite[Theorems~$4.2.11$ and~$4.6.4$]{Durrett19ProbabilityTheoryExamples}.
    By applying the optional-stopping theorem for uniformly integrable submartingales~\cite[Theorem~$4.8.3$]{Durrett19ProbabilityTheoryExamples}, we get that $\E{Z_T}[\filtration[0]] \geq \E{Z_0}[\filtration[0]]$.
    Solving this inequality for $\E{T}[\filtration[0]]$ and noting that~$Z_0$ is $\filtration[0]$-measurable concludes the proof.
\end{proof}

While we state \Cref{lemma:greenberg_stopping} in an elaborate fashion, we use it in a different way in the following proof of \Cref{lemma:exp_potential}.
%First, in order to ease notation, we ignore the indicator functions and check \cref{eq:multiplicativeDriftCondition,eq:varianceBound} for values of $t \in \N$ such that $t < T$ is true. Second, instead of using a filtration and calculating expectations that are random variables, such as $\E{S_{t+1}}[\filtration[t]]$ (ignoring the indicator function), we use normal expectations but do so for every possible outcome of~$S_t$. That is, we make sure that \cref{eq:multiplicativeDriftCondition,eq:varianceBound} are satisfied pointwise. Since we only consider countable state spaces, this approach is valid.
Instead of considering expected values conditional on a $\upsigma$-algebra, we consider expected values conditional on each of the outcomes of the random process before it stops.
We formalize this statement in the following remark.
\begin{remark}
    \label{rem:filtrations_to_events}
    Let $(X_t)_{t \in \N}$ denote a random process over~$\R$, defined over a probability space with sample space~$\Omega$, let~$(\filtration[t])_{t \in \N}$ denote the natural filtration of~$X$, and let~$T$ be a stopping time with respect to~$\filtration$.
    If~$X$ is discrete and Markovian, then, for all $t \in \N$ and all $\delta \in \R$, the following equivalence holds:
    \begin{align}
        \label[prop]{eq:equivalence_first_line}
        &\forall x \in \{y \in \R \mid \exists \omega \in \Omega\colon X_t(\omega) = y \land t < T(\omega)\}\colon \E{X_t - X_{t + 1}}[X_t = x] \leq \delta \\
        \nonumber
        &\leftrightarrow\\
        \label[prop]{eq:equivalence_second_line}
        &\E{(X_t - X_{t + 1}) \cdot \indicator{t < T}}[\filtration[t]] \leq \delta \cdot \indicator{t < T}.
    \end{align}
\end{remark}

Informally, on the one hand, \cref{eq:equivalence_second_line} considers all instantiations $\omega \in \Omega$ of~$X$ where~$X$ did not stop up to time~$t$.
On the other hand, \cref{eq:equivalence_first_line} partitions~$\Omega$ with respect to~$X_t$ and shows the relevant inequality for the sets $X_t^{-1}(x)$ (for the values of~$x$ as specified in the proposition).
Since for all $\omega \in X_t^{-1}(x)$ the value $\E{X_t - X_{t + 1}}[X_t](\omega)$ is the same (since~$X$ is Markovian), it suffices to consider their average, which is $\E{X_t - X_{t + 1}}[X_t = x]$.

More formally, first note that the indicator functions in \cref{eq:equivalence_second_line} relate to the predicate ``$t < T(\omega)$'' in the set of values for~$x$ in \cref{eq:equivalence_first_line}, which makes sure to consider only such values~$x$ for which the process did not stop yet, as these are the only instantiations of~$X$ for which \cref{eq:equivalence_second_line} is not trivially satisfied.

By definition of the conditional expectation for discrete random variables, it holds that
\begin{align*}
    \E{X_t - X_{t + 1}}[X_t = x] &= \sum_{\omega \in \Omega} \E{X_t - X_{t + 1}}[X_t](\omega) \cdot \Pr{\{\omega\}}[X_t^{-1}(x)]\\
    &=  \sum_{\substack{\omega \in \Omega\colon\\ X_t(\omega) = x}} \E{X_t - X_{t + 1}}[X_t](\omega) \cdot \frac{\Pr{\{\omega\}}}{\Pr{X_t^{-1}(x)}}.
\end{align*}
Since~$X$ is Markovian, for all $\omega \in \Omega$ where $X_t(\omega) = x$, the random variable $\E{X_t - X_{t + 1}}[X_t]$ has the same value $v \in \R$, as the distribution of~$X_{t + 1}$ only depends on $X_t = x$.
That is, $\E{X_t - X_{t + 1}}[X_t = x] = \E{X_t - X_{t + 1}}[X_t](\omega) = v$.
Since~$X$ is Markovian and thus conditioning on~$X_t$ is the same as conditioning on~$\filtration[t]$, this shows that $\E{X_t - X_{t + 1}}[X_t = x] = \E{X_t - X_{t + 1}}[\filtration[t]](\omega)$.

\paragraph*{\proofOf{\Cref{lemma:exp_potential}}}
We now start with the stating the proof of the coupling theorem, which we restate here for convenience.

\expPotentialTheorem*
\begin{proof}
	We aim to bound the expected time until~$\delta$ hits~$0$ for the coupled copies $(X_t)_{t \in \N}$ and $(Y_t)_{t \in \N}$ of~$\markov$ and for all pairs of starting states $x, y \in \Omega$.
	This results in a bound on the expected coupling time, and, because~$\markov$ is ergodic, also bounds~$\mixFunction{\markov}$ (see, for example, Chapter~$11$ by Mitzenmacher and Upfal~\cite{PAC} for a detailed discussion).
	%    Thus, bounding~$\E{\mixFunction{\markov}}$ by \Cref{lemma:greenberg_stopping} suffices.
	
	We start by defining a scaled potential~$\delta'$ such that, for all $x, y \in \Omega$, it holds that $\delta'(x, y) = \delta(x, y)/d$.
	Note that~$\delta'$ takes values in $\{0\} \cup [1, D/d]$, and, for all $t \in \N$, it holds that
	\[
		X_t = Y_t \leftrightarrow \delta(X_t, Y_t) = 0 \leftrightarrow \delta'(X_t, Y_t) = 0 .
	\]
	Further, for all $x, y \in \Omega$, by the linearity of expectation and by \cref{eq:driftCondition}, it holds that
	\begin{align*}
		\E{\delta'(X_{t+1}, Y_{t+1})}[X_t=x, Y_t=y] &= \frac{1}{d} \E{\delta(X_{t+1}, Y_{t+1})}[X_t=x, Y_t=y] \\
		& \le \frac{1}{d} \delta(x, y) = \delta'(x, y) 
	\end{align*}
	and, by \cref{eq:largeJumps}, that
	\begin{align*}
		& \Pr{|\delta'(X_{t+1}, Y_{t+1}) - \delta'(x, y)| \ge \eta \delta'(x, y)}[X_t=x, Y_t=y] \\
		&\hspace{1cm}= \Pr{|\delta(X_{t+1}, Y_{t+1}) - \delta(x, y)| \ge \eta \delta(x, y)}[X_t=x, Y_t=y] \\
		&\hspace{1cm}\ge \kappa .
	\end{align*}
	
	We define the stochastic processes whose expected hitting time we bound by $\varphi = (\varphi_t)_{t \in \N}$, where $\varphi_t = \delta'(X_t, Y_t)$.
	Further, for all $x \in [0, D/d]$, let
	\begin{align*}
		\transform[x] = 
		\begin{cases}
			2\ln(2) x - 2\ln(2) &\text{ if } x \in [0, 1), \\
			\ln(x) &\text{ if } x \in [1, D/d],
		\end{cases} 
	\end{align*}
	and let $\psi = (\psi_t)_{t \in \N}$ with $\psi_t = \transform[\varphi_t]$ for all $t \in \N$.
	Note that 
	\[
		\psi_t = - 2\ln(2) \leftrightarrow \varphi_t = 0 \leftrightarrow X_t = Y_t .
	\]
	Let $T = \inf_{t \in \N} \left\{ \psi_t \leq - 2\ln(2) \right\}$.
	To obtain the desired bound on the mixing time, we bound $\E{T}[X_0 = x, Y_0 = y]$ for every pair $x, y \in \Omega$.
	
	To this end, we aim to apply \Cref{lemma:greenberg_stopping} to~$\psi$ (with the natural filtration of $(X_t, Y_t)_{t \in \N}$) together with \Cref{rem:filtrations_to_events}, which requires showing, for all $t \in \N$, all $x \in \rng{X_t}$, and all $y \in \rng{Y_t}$ such that there is an instantiation~$\omega$ from the sample space with $X_t(\omega) = x$, $Y_t(\omega) = y$, and $t < T(\omega)$, that $\E{\psi_{t+1}}[X_t = x, Y_t = y] \le \psi_{t}$ (\cref{eq:multiplicativeDriftCondition}) and obtaining a lower bound on $\E{(\psi_{t+1} - \psi_{t})^2}[X_t = x, Y_t = y]$ (\cref{eq:varianceBound}).
	
	Let $t \in \N$, $x \in \rng{X_t}$, and $y \in \rng{Y_t}$, all as described above.
    Note that this implies $\psi_{t} \in [0, \ln(D/d)]$ and $X_t \neq Y_t$.
    Further note that~$\transform$ is a concave function and that $\varphi_t \geq 1$.
	Applying Jensen's inequality to the conditional expectation, we obtain 
	\begin{align*}
		\E{\psi_{t+1}}[X_t = x, Y_t = y] 
		\le \transform[\E{\varphi_{t+1}}[X_t = x, Y_t = y]] 
		\le \transform[\varphi_{t}]
		= \psi_{t},
	\end{align*}
	which shows \cref{eq:multiplicativeDriftCondition}.
	
	We proceed by bounding $\E{(\psi_{t+1} - \psi_{t})^2}[X_t = x, Y_t = y]$ from below.
	Let~$A$ be the event that~$\psi$ jumps from $\psi_t \ge 0$ directly to $\psi_{t+1} = -2 \ln(2)$ (i.e., $\varphi_{t} \ge 1$ and $\varphi_{t+1} = 0$).
	The positive self-loop probability of~$\markov$ implies that $\Pr{A}[X_t = x, Y_t = y] < 1$ and $\Pr{\overline{A}}[X_t = x, Y_t = y][\big] > 0$.
	By the law of total expectation, we obtain
	\begin{align*}
		&\E{(\psi_{t+1} - \psi_{t})^2}[X_t = x, Y_t = y]\\ 
		&\hspace{1cm}= \E{(\psi_{t+1} - \psi_{t})^2}[X_t = x, Y_t = y, A] \Pr{A}[X_t = x, Y_t = y] \\
		&\hspace{1cm}\quad+ \E{(\psi_{t+1} - \psi_{t})^2}[X_t = x, Y_t = y, \overline{A}] (1-\Pr{A}[X_t = x, Y_t = y]) .
	\end{align*}
	
	We lower-bound each term in the sum separately.
	Because of $\psi_t \ge 0$, we have
	\begin{align} 
		\notag
		&\E{(\psi_{t+1} - \psi_{t})^2}[X_t = x, Y_t = y, A] \Pr{A}[X_t = x, Y_t = y] \\
		\notag
		&\hspace{1cm}= \big(-2 \ln(2) - \psi_{t}\big)^2 \cdot \Pr{A}[X_t = x, Y_t = y]  \\
		\label{eq:exp_potential:1}
		&\hspace{1cm}\ge 4 \ln(2)^2 \cdot \Pr{A}[X_t = x, Y_t = y] .
	\end{align}
	Furthermore, because $\eta > 0$, by conditional Markov's inequality, we get
	\begin{align*}
		&\E{(\psi_{t+1} - \psi_{t})^2}[X_t = x, Y_t = y, \overline{A}]\\
		&\hspace{1cm}\ge \ln(1+\eta)^2 \Pr{(\psi_{t+1} - \psi_{t})^2 \ge \ln(1+\eta)^2}[X_t = x, Y_t = y, \overline{A}]\\
		&\hspace{1cm}=  \ln(1+\eta)^2 \Pr{|\psi_{t+1} - \psi_{t}| \ge \ln(1+\eta)}[X_t = x, Y_t = y, \overline{A}].
	\end{align*}
	We decompose the probability as
	\begin{align*}
		&\Pr{|\psi_{t+1} - \psi_{t}| \ge \ln(1+\eta)}[X_t = x, Y_t = y, \overline{A}] \\
		&\hspace{1cm}= \Pr{\psi_{t+1} - \psi_{t} \ge \ln(1+\eta)}[X_t = x, Y_t = y, \overline{A}] \\
		&\hspace{1cm} \quad+ \Pr{\psi_{t+1} - \psi_{t} \le -\ln(1+\eta)}[X_t = x, Y_t = y, \overline{A}].
	\end{align*}
	We rewrite the first of these probabilities as
	\begin{align}
		\notag
		&\Pr{\psi_{t+1} - \psi_{t} \ge \ln(1+\eta)}[X_t = x, Y_t = y, \overline{A}] \\
		\notag
		&\hspace{1cm}= \Pr{\ln \left( \frac{\varphi_{t+1}}{\varphi_{t}} \right) \ge \ln(1+\eta)}[X_t = x, Y_t = y, \overline{A}] \\
		\notag
		&\hspace{1cm}= \Pr{\frac{\varphi_{t+1}}{\varphi_{t}} \ge 1+\eta}[X_t = x, Y_t = y, \overline{A}] \\
		\label{eq:exp_potential:2}
		&\hspace{1cm}= \Pr{\varphi_{t+1} - \varphi_{t} \ge \eta \varphi_{t}}[X_t = x, Y_t = y, \overline{A}].
	\end{align}
	Since, for all $z \in (0, 1)$, it holds that $-\ln(1+z) \ge \ln(1-z)$, we bound the other probability~by
	\begin{align}
		\notag
		&\Pr{\psi_{t+1} - \psi_{t} \le -\ln(1+\eta)}[X_t = x, Y_t = y, \overline{A}] \\
		\notag
		&\hspace{1cm}\ge \Pr{\psi_{t+1} - \psi_{t} \le \ln(1-\eta)}[X_t = x, Y_t = y, \overline{A}] \\
		\notag
		&\hspace{1cm}= \Pr{\ln \left( \frac{\varphi_{t+1}}{\varphi_{t}} \right) \le \ln(1-\eta)}[X_t = x, Y_t = y, \overline{A}] \\
		\notag
		&\hspace{1cm}= \Pr{\frac{\varphi_{t+1}}{\varphi_{t}} \le 1-\eta}[X_t = x, Y_t = y, \overline{A}] \\
		\label{eq:exp_potential:3}
		&\hspace{1cm}= \Pr{\varphi_{t+1} - \varphi_{t} \le -\eta \varphi_{t}}[X_t = x, Y_t = y, \overline{A}].
	\end{align}
	Combining \cref{eq:exp_potential:2,eq:exp_potential:3}, we obtain
	\begin{align}
		\notag
		&\Pr{|\psi_{t+1} - \psi_{t}| \ge \ln(1+\eta)}[X_t = x, Y_t = y, \overline{A}] \\
		\label{eq:exp_potential:4}
		&\hspace{1cm}\ge \Pr{|\varphi_{t+1} - \varphi_{t}| \ge \eta \varphi_{t}}[X_t = x, Y_t = y, \overline{A}] .
	\end{align}
	
	For bounding the right-hand side of \cref{eq:exp_potential:4}, recall that we consider an instantiation of the process such that $\varphi_{t} \ge 1$.
	Consider the probability that~$\varphi$ takes steps of at least size~$\eta \varphi_{t}$.
	By the law of total probability,
	\begin{align*}
		&\Pr{|\varphi_{t+1} - \varphi_{t}| \ge \eta \varphi_{t}}[X_t = x, Y_t = y] \\
		&\hspace{1cm}= \Pr{|\varphi_{t+1} - \varphi_{t}| \ge \eta \varphi_{t}}[X_t = x, Y_t = y, A] \Pr{A}[X_t = x, Y_t = y] \\
		&\hspace{1cm}\quad+ \Pr{|\varphi_{t+1} - \varphi_{t}| \ge \eta \varphi_{t}}[X_t = x, Y_t = y, \overline{A}] (1-\Pr{A}[X_t = x, Y_t = y]) . 
	\end{align*}
	Since~$A$ is the event to go from $\varphi_{t} \ge 1$ to $\varphi_{t+1} = 0$, for all $\eta \in (0, 1)$, it holds that
	\[
		\Pr{|\varphi_{t+1} - \varphi_{t}| \ge \eta \varphi_{t}}[X_t = x, Y_t = y, A] = 1 .
	\]
	Thus, and by \cref{eq:largeJumps}, we obtain
	\begin{align}
		\notag
		&\Pr{|\varphi_{t+1} - \varphi_{t}| \ge \eta \varphi_{t}}[X_t = x, Y_t = y, \overline{A}] \\
		\notag
		&\hspace{1cm}= \frac{\Pr{|\varphi_{t+1} - \varphi_{t}| \ge \eta \varphi_{t}}[X_t = x, Y_t = y] - \Pr{A}[X_t = x, Y_t = y]}{1-\Pr{A}[X_t = x, Y_t = y]} \\
		\label{eq:exp_potential:5}
		&\hspace{1cm}\ge \frac{\kappa - \Pr{A}[X_t = x, Y_t = y]}{1-\Pr{A}[X_t = x, Y_t = y]} .
	\end{align}
	
	By combining \cref{eq:exp_potential:4,eq:exp_potential:5} and observing that for $\varphi_{t} \ge 1$ it holds that $\varphi_{t} = \eulerE^{\psi_{t}}$, we get
	\begin{align}
		\notag
		&\E{(\psi_{t+1} - \psi_{t})^2}[X_t = x, Y_t = y, \overline{A}] \\
		\notag
		&\hspace{1cm}\ge \ln(1+\eta)^2 \Pr{|\psi_{t+1} - \psi_{t}| \ge \ln(1+\eta)}[X_t = x, Y_t = y, \overline{A}] \\
		\label{eq:exp_potential:6}
		&\hspace{1cm}\ge \ln(1+\eta)^2 \frac{\kappa - \Pr{A}[X_t = x, Y_t = y]}{1-\Pr{A}[X_t = x, Y_t = y]} .
	\end{align}
	
	Last, we use \cref{eq:exp_potential:1,eq:exp_potential:6} and that $\eta < 1$ implies $\ln(1 + \eta) \le \ln(2)$ to obtain
	\begin{align*}
		&\E{(\psi_{t+1} - \psi_{t})^2}[X_t = x, Y_t = y]  \\
		&\hspace{0.9cm}\ge 4 \ln(2)^2 \Pr{A}[X_t = x, Y_t = y] \\
		&\hspace{0.9cm}\quad+ (1-\Pr{A}[X_t = x, Y_t = y]) \ln(1+\eta)^2 \frac{\kappa - \Pr{A}[X_t = x, Y_t = y]}{1-\Pr{A}[X_t = x, Y_t = y]} \\
		&\hspace{0.9cm}= 4 \ln(2)^2 \Pr{A}[X_t = x, Y_t = y] + \ln(1+\eta)^2 \kappa - \ln(1+\eta)^2 \Pr{A}[X_t = x, Y_t = y] \\
		&\hspace{0.9cm}\ge 3 \ln(2)^2 \Pr{A}[X_t = x, Y_t = y] + \ln(1+\eta)^2 \kappa \\
		&\hspace{0.9cm}\ge \ln(1+\eta)^2 \kappa ,
	\end{align*}
	which shows \cref{eq:varianceBound}.
	
	By \Cref{lemma:greenberg_stopping} and $\psi_{0} \le \ln(D/d)$, we get for all $x, y \in \Omega$ that
	\begin{align*}
		\E{T}[X_0 = x, Y_0 = y] \leq \frac{\big(\ln(D/d) + 2 \ln(2)\big)^2}{\ln(1+\eta)^2 \kappa}.
	\end{align*}
	This results in the desired mixing time bound of
	\[
		\mix{\markov}{\err} 
		\leq \frac{\big(\ln(D/d) + 2 \ln(2)\big)^2}{\ln(1+\eta)^2 \kappa}  \ln \left (\frac{1}{\err} \right ) .
		\qedhere
	\]
\end{proof}

\section{\namedAppendix{\nameref{sec:polymerDynamics}}} \label{appendix:polymerDynamics}
We state all proofs that are omitted in \Cref{sec:polymerDynamics}.  

\paragraph*{\proofOf{\Cref{lemma:convergence}}}
\convergenceLemma*	
\begin{proof}
	First, note that~\markov[\polymerModel] is irreducible, as there is a positive probability to go from any polymer family $\polyfam \in \polyfams$ to the empty polymer family~$\emptyset$ in a finite number of steps by consecutively removing each polymer $\gamma \in \polyfam$.
	Similarly, there is a positive probability to go from~$\emptyset$ to any polymer family $\polyfam' \in \polyfams$ in a finite number of steps by consecutively adding all polymers $\gamma' \in \polyfam'$.
	
	We proceed by proving that ~\gibbsDistribution[\polymerModel], which we abbreviate as~\gibbsDistribution, is a stationary distribution of~\markov[\polymerModel].
	To this end, we show that~\markov[\polymerModel] satisfies the detailed-balance condition with respect to~\gibbsDistribution.
	That is, for all $\polyfam, \polyfam' \in \polyfams$, it holds that 
	\begin{equation}
	\label{eq:detailedBalance}
		\gibbsDistributionFunction{\polyfam} \cdot \transitionProbabilities{\polyfam}{\polyfam'} = \gibbsDistributionFunction{\polyfam'} \cdot \transitionProbabilities{\polyfam'}{\polyfam}.
	\end{equation}
	Note that it is sufficient to check \cref{eq:detailedBalance} for all pairs of states with a symmetric difference of exactly one polymer.
	
	Let $\polyfam, \polyfam' \in \polyfams$ and assume without loss of generality that $\polyfam' = \polyfam \cup \{\gamma\}$ for some polymer $\polymer \notin \polyfam$.
	Note that, by \cref{eq:transitions}, $\transitionProbabilities{\polyfam}{\polyfam'} = \weight[\polymer] \cdot \transitionProbabilities{\polyfam'}{\polyfam}$.
	Further, by definition of the Gibbs distribution, we have $\gibbsDistributionFunction{\polyfam'} = \weight[\polymer] \cdot \gibbsDistributionFunction{\polyfam}$.
	Thus, we get
	\[
		\gibbsDistributionFunction{\polyfam} \cdot \transitionProbabilities{\polyfam}{\polyfam'}
		= \gibbsDistributionFunction{\polyfam} \cdot \weight[\polymer] \cdot \transitionProbabilities{\polyfam'}{\polyfam}
		= \gibbsDistributionFunction{\polyfam'} \cdot \transitionProbabilities{\polyfam'}{\polyfam},
	\]
	which shows that $\gibbs$ is a stationary distribution of ~\markov[\polymerModel].
	
	Finally, we argue that ~\markov[\polymerModel] is ergodic.	
	Note that an irreducible Markov chain has a stationary distribution if and only if it is positive recurrent.
	In addition, every state of ~\markov[\polymerModel] has a positive self-loop probability, which implies that the chain is aperiodic.
	This shows that ~\markov[\polymerModel] is ergodic and concludes the proof. 
\end{proof}	

\paragraph*{\proofOf{\Cref{lemma:mixing}}}
\mixingLemma*	
\begin{proof}
	We aim to apply \Cref{lemma:exp_potential}, which requires us to define a potential~$\delta$.
	%	First, note that according to \Cref{obs:transitions} we know that the polymer Markov chain $\markov$ has a positive-self loop probability.
	%	
	We do so by utilizing the function $\delta'\colon \polys \to \R_{> 0}$ with $\gamma \mapsto f(\gamma)/\big(z_{\gamma} (1 + w_{\gamma})\big)$.
	Let $\oplus$ denote the symmetric set difference.
	For all $\polyfam, \polyfam' \in \polyfams$, we define
	\[
		\delta(\polyfam, \polyfam') = \sum_{\gamma \in \polyfam \oplus \polyfam'} \delta'(\gamma).
	\]
	Note that $\delta(\polyfam, \polyfam')$ only depends on the symmetric difference of~$\polyfam$ and~$\polyfam'$ and that $\delta(\polyfam, \polyfam') = 0$ if and only if $\polyfam \oplus \polyfam' = \emptyset$, which only is the case when $\polyfam = \polyfam'$.
	
	We continue by constructing a coupling between two copies of~\markov[\polymerModel], namely between $(X_t)_{t \in \N}$ and $(Y_t)_{t \in \N}$.
	We couple these chains such that, for each transition,
	\begin{itemize}
		\item both choose the same index $i \in [m]$ and
		\item both draw the same polymer family $\polyfam_{\Delta} \in \polyfams[][\polymerClique[i]]$ from $\gibbsDistribution[][\polymerClique[i]]$.
	\end{itemize}
	This constitutes a valid coupling, as each chain transitions according to its desired marginal transition probabilities.
	
	We now show for all $t \in \N$ and $\polyfam, \polyfam' \in \polyfams$ that
	\begin{align*}
		\E{\delta(X_{t+1}, Y_{t+1})}[X_t = \polyfam, Y_t = \polyfam'] \le \delta(\polyfam, \polyfam').
	\end{align*}
	Note that this trivially holds if $\polyfam = \polyfam'$, as the chains~$X$ and~$Y$ behave identically from then on. Thus, we are left with the case that $\polyfam \neq \polyfam'$, which implies that $|\polyfam \oplus \polyfam'| \geq 1$.
	
	We introduce the following notation.
	For all $\gamma \in \polys$, let $N(\gamma) = \{\gamma' \in \polys \mid \gamma' \ncomp \gamma, \gamma' \neq \gamma\}$ denote the neighborhood of~$\gamma$.
	We extend this definition to arbitrary subsets of polymers $\restrictedPolys \subseteq \polys$ by $N(\restrictedPolys) = \bigcup_{\gamma \in \restrictedPolys} N(\gamma)$.
	
	Let $\Delta = \polyfam \oplus \polyfam'$, and let $\gamma \in \Delta$.
	Assume without loss of generality that $\gamma \in \polyfam$.
	By \cref{eq:transitions}, with probability $z_{\gamma}/m$, the chain~$X$ removes~$\gamma$, causing $\delta(X_{t+1}, Y_{t+1})$ to decrease by at least $\delta'(\gamma)$.
	Further, if $\gamma \in \Delta \setminus N(\Delta)$, with probability $w_{\gamma} z_{\gamma}/m$, the chain~$Y$ adds~$\gamma$ and the chain~$X$ remains in its state.
	Again, $\delta(X_{t+1}, Y_{t+1})$ decreases by $\delta'(\gamma)$.
	Note that the case $\gamma \in \polyfam'$ is exactly symmetric with $X$ and $Y$ swapped.
	%    
	%    Note that $\delta(X_{t+1}, Y_{t+1})$ decreases,
	%	\begin{itemize}
	%		\item for each $\gamma \in \Delta$, by $\delta'(\gamma)$ if both chains delete~$\gamma$, which happens with probability $z_{\gamma}/m$, or,
	%		\item for each $\gamma \in \Delta \setminus N(\Delta)$, by $\delta'(\gamma)$ if both chains add~$\gamma$, which happens with probability $w_{\gamma} z_{\gamma}/m$,
	%	\end{itemize}
	%	because in both cases such an update can be done by both chains and $\gamma$ is removed from the symmetric difference.\
	
	Let $\delta^-(\polyfam, \polyfam')$ denote the expected (conditional) decrease of~$\delta$.
	By the observations above, we see that
	\[
		\delta^-(\polyfam, \polyfam')
		= \sum_{\gamma \in \Delta} \delta'(\gamma) \frac{z_{\gamma}}{m} + \sum_{\gamma \in \Delta \setminus N(\Delta)} \delta'(\gamma) w_{\gamma }\frac{z_{\gamma}}{m}	
		= \sum_{\gamma \in \Delta} \delta'(\gamma) \frac{z_{\gamma}}{m} (1 + w_{\gamma}) - \sum_{\gamma \in \Delta \cap N(\Delta)} \delta'(\gamma) w_{\gamma }\frac{z_{\gamma}}{m}.
	\]
	
	Moreover, $\delta$ increases whenever a polymer~$\gamma$ is added to only one of both chains.
	This only occurs if $\gamma \in N(\Delta) \setminus \Delta$ and has probability $w_{\gamma} z_{\gamma}/m$ for each such polymer.
	Similarly to the expected decrease, we denote the expected increase by $\delta^+(\polyfam, \polyfam')$. We bound
	\begin{align*}
		&\delta^+(\polyfam, \polyfam')		
		\le \sum_{\gamma \in N(\Delta) \setminus \Delta} \delta'(\gamma) w_{\gamma} \frac{z_{\gamma}}{m}	
		= \sum_{\gamma \in N(\Delta)} \delta'(\gamma) w_{\gamma} \frac{z_{\gamma}}{m} 
		- \sum_{\gamma \in \Delta \cap N(\Delta)} \delta'(\gamma) w_{\gamma} \frac{z_{\gamma}}{m} \\
		&\quad \le \sum_{\gamma \in \Delta} \sum_{\gamma' \in N(\gamma)} \delta'(\gamma') w_{\gamma'} \frac{z_{\gamma'}}{m}
		- \sum_{\gamma \in \Delta \cap N(\Delta)} \delta'(\gamma) w_{\gamma} \frac{z_{\gamma}}{m}.
	\end{align*}
	
	Together, we obtain
	\begin{align*}
		&\E{\delta(X_{t+1}, Y_{t+1})}[X_t = \polyfam, Y_t = \polyfam'] 
		= \delta(\polyfam, \polyfam') + \delta^+(\polyfam, \polyfam') - \delta^-(\polyfam, \polyfam') \\
		&\quad \le \delta(\polyfam, \polyfam') 
		+  \sum_{\gamma \in \Delta} \sum_{\gamma' \in N(\gamma)} \delta'(\gamma') w_{\gamma'}  \frac{z_{\gamma'}}{m} 
		- \sum_{\gamma \in \Delta} \delta'(\gamma) \frac{z_{\gamma}}{m} (1 + w_{\gamma}) \\
		&\quad = \delta(\polyfam, \polyfam') 
		+  \sum_{\gamma \in \Delta} 
		\left( \sum_{\gamma' \in N(\gamma)} \delta'(\gamma') w_{\gamma'} \frac{z_{\gamma'}}{m} 
		- \delta'(\gamma) \frac{z_{\gamma}}{m} (1 + w_{\gamma}) \right) .
	\end{align*}
	%	 Note that the $\sum_{\gamma \in \Delta \cap N(\Delta)} \delta(\gamma) w_{\gamma} \frac{z_{\gamma}}{m}$ terms in $\delta^+$ and $\delta^-$ have canceled out.
	
	We proceed by showing that, for each $\gamma \in \Delta$, the respective summand in the sum above is at most zero.
	By the definition of~$\delta'$, we get
	\[
		\sum_{\gamma' \in N(\gamma)} \delta'(\gamma') w_{\gamma'} \frac{z_{\gamma'}}{m} 
		- \delta'(\gamma) \frac{z_{\gamma}}{m} (1 + w_{\gamma})
		= \frac{1}{m} \left( \sum_{\gamma' \in N(\gamma)} f(\gamma') \frac{w_{\gamma'}}{1 + w_{\gamma'}} 
		- f(\gamma) \right).
	\]
	By the definition of $N(\gamma)$ and since~\polymerModel satisfies the \pmc{}, we bound
	\[
		\frac{1}{m} \left( \sum_{\gamma' \in N(\gamma)} f(\gamma') \frac{w_{\gamma'}}{1 + w_{\gamma'}} 
		- f(\gamma) \right)
		= \frac{1}{m} \left( \sum_{\substack{\gamma' \in \polys\colon \gamma' \ncomp \gamma \\\gamma' \neq \gamma}} f(\gamma') \frac{w_{\gamma'}}{1 + w_{\gamma'}} 
		- f(\gamma) \right)
		\le 0.
	\]
	Consequently, we get that
	\[
		\E{\delta(X_{t+1}, Y_{t+1})}[X_t = \polyfam, Y_t = \polyfam'] \le \delta(\polyfam, \polyfam').
	\]
	
	We now show that there are values $\eta, \kappa \in (0, 1)$ such that, for all $t \in \N$ and all $\polyfam, \polyfam' \in \polyfams$ with $\polyfam \neq \polyfam'$, it holds that
	\begin{align}
	\label{eq:probabilityBound}
		\Pr{|\delta(X_{t+1}, Y_{t+1}) - \delta(\polyfam, \polyfam')| \ge \eta \delta(\polyfam, \polyfam')}[X_t = \polyfam, Y_t = \polyfam'][\big]
		\ge \kappa.
	\end{align}
	
	Note that every polymer family in~$\polyfams$ has at most~$m$ polymers because it can have at most one polymer from each polymer clique.
	Thus, for $\Delta = \polyfam \oplus \polyfam'$, we bound $|\Delta| \le 2m$.
	Consequently, there is at least one polymer $\gamma \in \Delta$ such that $\delta'(\gamma) \ge \delta(\polyfam, \polyfam')/(2m)$.
	Assume without loss of generality that $\gamma \in \polyfam$.
	With probability $z_{\gamma}/m$, chain~$X$ deletes~$\gamma$, resulting in $\delta$ decreasing by at least $|\delta(X_{t+1}, Y_{t+1}) - \delta(\polyfam, \polyfam')| \geq \delta'(\gamma) \ge \delta(\polyfam, \polyfam') / (2m)$.
	Thus, \cref{eq:probabilityBound} is true for $\eta = 1/(2m)$ and $\kappa = z_{\gamma}/m \ge \big(\min_{\gamma \in \polys} \{z_{\gamma}\}\big)/m$.
	
	It remains is to determine $d, D \in \R_{> 0}$ such that, for all $\polyfam, \polyfam' \in \polyfams$ with $\polyfam \neq \polyfam'$, it holds that $\delta(\polyfam, \polyfam') \in [d, D]$.
	Let $\Delta = \polyfam \oplus \polyfam'$, noting again that $|\Delta| \le 2m$.
	We choose
	\begin{align*}
		d & \ge \min_{\gamma \in \polys} \{\delta'(\gamma)\} 
		= \min_{\gamma \in \polys} \left\{\frac{f(\gamma)}{z_{\gamma} (1 + w_{\gamma})}\right\} \ \textrm{ and} \\
		D & \le 2 m \max_{\gamma \in \polys} \{\delta'(\gamma)\} 
		= 2m \max_{\gamma \in \polys} \left\{\frac{f(\gamma)}{z_{\gamma} (1 + w_{\gamma})}\right\}.
	\end{align*}
	
	Applying \Cref{lemma:exp_potential} and observing that $\ln \left (1 + \frac{1}{2m} \right )^2 \ge \frac{1}{4 m^2}$ concludes the proof.
\end{proof}

\paragraph*{\proofOf{\Cref{prop:comparison}}}
\comparisonProposition*
\begin{proof}
	Instead of directly working with the \pmc, we show that the \fpc implies that
	\[
		\sum_{\polymer' \in \polys\colon \polymer' \ncomp \polymer} f(\polymer') \weight[\polymer'] \le f(\polymer), 
	\]
	for all polymers $\polymer \in \polys$.
	By $\weight[\polymer] > \frac{\weight[\polymer]}{1 + \weight[\polymer]}$, we can immediately conclude that \pmc is satisfied as well.
	
	Note that $\emptyset \in \polyfams[\polymerModel][\neighbors{\polymer}]$ and, for all $\polymer' \in \polys$ with $\polymer' \ncomp \polymer$, it holds that $\{\polymer'\} \in \polyfams[\polymerModel][\neighbors{\polymer}]$.
	Thus,
	\[
		\sum_{\polymer' \in \polys\colon \polymer' \ncomp \polymer} f(\polymer') \weight[\polymer']
		< 1 + \sum_{\polymer' \in \polys\colon \polymer' \ncomp \polymer} f(\polymer') \weight[\polymer']
		\le \sum_{\polyfam \in \polyfams[\polymerModel][\neighbors{\polymer}]} \prod_{\polymer' \in \polyfam} f(\polymer') \weight[\polymer'] 
		\le f(\polymer) ,
	\]
	which proves the claim.
\end{proof}

\section{\namedAppendix{\nameref{sec:algo}}} \label{appendix:algo}
Here we present all proofs that are absent from \Cref{sec:algo}.

\paragraph*{\proofOf{\Cref{thm:sampling}}}
\samplingTheorem*
\begin{proof}
	In order to sample from~$\gibbs$, we utilize the polymer Markov chain \markov[\polymerModel] based on~\polymerClique.
	By \Cref{thm:markov_chain}, it holds that
	\[
		\mix{\markov[\polymerModel]}{\err} 
		\in \bigO{
			m^3 \prt_{\max}
			\ln \left( 
			m^2 \prt_{\max}^2
			\frac{f_{\max}}{f_{\min}}
			\right)^2
			\ln \left( \frac{1}{\err} \right)
		}.	
	\]
	Due to \cref{thm:sampling:prt,thm:sampling:pmc}, it holds that $\mix{\markov[\polymerModel]}{\err} \in \poly{\numberOfCliques/\err}$.
	It remains to show that each step of~\markov[\polymerModel], as laid out in \Cref{def:markov_chain}, can be computed in time $\poly{\numberOfCliques}$.
	To this end, let $X_t$ denote the current state of $\poly{\numberOfCliques}$.
	
	Because of \cref{assume:sampling:cover,thm:sampling:inner}, for all $i \in [\numberOfCliques]$, we can draw~$\clique_i$ uniformly at random and can sample $\polyfam \in \polyfams[][\polymerClique[i]]$ according to $\gibbsDistribution[][\polymerClique[i]]$ in time $\poly{\numberOfCliques}$. This covers \cref{line:choosePolymerClique,line:samplePolymer}.
	
	Regarding \Cref{line:removePolymer}, note that we can check whether $\polyfam = \emptyset$ in time $\poly{\numberOfCliques}$.%, due to \cref{assume:sampling:setOperations}.
	Assume that $\polyfam = \emptyset$, and note that $|X_t| \le m \in \poly{m}$, as~$X_t$ contains at most one polymer per polymer clique.
	In order to compute $X_t \setminus \clique_i$, it suffices to iterate over every $\gamma \in \polyfam$ and check if $\gamma \in \clique_i$, which can be done in time $\poly{\numberOfCliques}$, by \cref{assume:sampling:clique}. Once we found $\gamma \in \clique_i$, we remove it in time $\poly{\numberOfCliques}$.%, by \cref{assume:sampling:setOperations}.
	
	Regarding \Cref{line:addPolymer}, assume now that $\polyfam = \{\gamma\}$ for some $\gamma \in \clique_i$.
	In order to decide if $X_t \cup \polyfam$ is a valid polymer family, it is sufficient to iterate over all $\gamma' \in X_t$ and check whether any of them is incompatible to $\gamma$.
	By \cref{assume:sampling:ncomp} this can be done in time $\poly{\numberOfCliques}$, which concludes the proof.
\end{proof}

\paragraph*{Proof of \Cref{lemma:appx_algo}}
\appxAlgoLemma*
\begin{proof}
		Let $i \in [\numberOfCliques]$.
	We start by bounding $\prtFrac[i]$.
	Note that $\partitionFunction[][\Uclique{i}] \ge \partitionFunction[][\Uclique{i-1}]$ and $\partitionFunction[][\Uclique{i}] \le \partitionFunction[][\Uclique{i-1}] \partitionFunction[][\polymerClique[i]]$.
	Thus, $1/\prt_{\max} \le \prtFrac[i] \le 1$.
	
	The remaining proof is split into two parts.
	First, we bound $\E{\appxPrtFrac}$ with respect to $1/\prt$.
	Second, we bound the absolute difference of $\appxPrtFrac$ and $\E{\appxPrtFrac}$.
	Combining both errors concludes the proof.
	
	\medskip
	\emph{Bounding $\E{\appxPrtFrac}$.}
	Note that, for all $i \in [\numberOfCliques]$, it holds that $\prtFrac[i] - \err_{\numSamples} \le \E{\appxPrtFrac[i]} \le \prtFrac[i] + \err_{\numSamples}$, since~\appxPrtFrac[i] is the mean of $\err_\numSamples$-approximate samples.
	By the bounds on $\prtFrac[i]$ and our choice of~$\err_{\numSamples}$, we get
	\[
		\left( 1 - \frac{\err}{5 \numberOfCliques} \right) \prtFrac[i] \le \E{\appxPrtFrac[i]} \le \left( 1 + \frac{\err}{5 \numberOfCliques} \right) \prtFrac[i] .
	\]
	
	Recall that $1/\prt = \prod_{i \in [\numberOfCliques]} \prtFrac[i]$. Further, since $\{\appxPrtFrac[i]\}_{i \in [\numberOfCliques]}$ are mutually independent, we have $\E{\appxPrtFrac} = \prod_{i \in [\numberOfCliques]} \E{\appxPrtFrac[i]}$.
	Consequently, since, for all $x \in [0, 1]$ and all $k \in \N_{> 0}$, it holds that $\eulerE^{-x/k} \leq 1 - x/(k + 1)$~\cite[Chapter $3$]{jerrum2003counting}, we obtain
	\begin{align} \label{lemma:appx_algo:eq1}
		\eulerE^{-\err/4} \frac{1}{\prt}
		\le \left( 1 - \frac{\err}{5 \numberOfCliques} \right)^{\numberOfCliques} \frac{1}{\prt}
		\le \E{\appxPrtFrac}
		\le \left( 1 + \frac{\err}{5 \numberOfCliques} \right)^{\numberOfCliques} \frac{1}{\prt}
		\le \eulerE^{\err/5} \frac{1}{\prt}
		\le \eulerE^{\err/4} \frac{1}{\prt} .
	\end{align}		
	
	\medskip
	\emph{Bounding the absolute difference of $\appxPrtFrac$ and $\E{\appxPrtFrac}$.} By Chebyshev's inequality, we get
	\begin{align*}
		\Pr{\absolute{\appxPrtFrac - \E{\appxPrtFrac}} \ge \frac{\err}{5} \E{\appxPrtFrac}}
		& \le \frac{25}{\err^2} \frac{\Var{\appxPrtFrac}}{\E{\appxPrtFrac}^2}
		= \frac{25}{\err^2} \left( \frac{\E{\appxPrtFrac^2}}{\E{\appxPrtFrac}^2} - 1 \right).
	\end{align*}
	Again, by the mutual independence of $\{\appxPrtFrac[i]\}_{i \in [\numberOfCliques]}$, we have $\E{\appxPrtFrac}^2 = \prod_{i \in [\numberOfCliques]} \E{\appxPrtFrac[i]}^2$ and $\E{\appxPrtFrac^2} = \prod_{i \in [\numberOfCliques]} \E{\appxPrtFrac[i]^2}$.
	Thus,
	\begin{align*}
		\Pr{\absolute{\appxPrtFrac - \E{\appxPrtFrac}} \ge \frac{\err}{5} \E{\appxPrtFrac}}
		%        & \le \frac{25}{\err^2} \left( \frac{\E{\appxPrtFrac^2}}{\E{\appxPrtFrac}^2} - 1 \right)
		\leq \frac{25}{\err^2} \left( \prod_{i \in [\numberOfCliques]} \frac{\E{\appxPrtFrac[i]^2}}{\E{\appxPrtFrac[i]}^2} - 1 \right)
		= \frac{25}{\err^2} \left( \prod_{i \in [\numberOfCliques]} \left( 1 + \frac{\Var{\appxPrtFrac[i]}}{\E{\appxPrtFrac[i]}^2} \right) - 1 \right) .
	\end{align*} 
	
	For bounding the variance of $\appxPrtFrac[i]$, recall that $\appxPrtFrac[i] = \frac{1}{\numSamples} \sum_{j \in [\numSamples]} \indicator{\polyfam^{(j)} \in \polyfams[][\Uclique{i-1}]}$, where $\{\polyfam^{(j)}\}_{j \in [\numSamples]}$ are independently drawn from an $\err_\numSamples$-approximation of $\gibbsDistribution[][\Uclique{i}]$.
	By \cref{eq:expectationOfIndicatorFunctionForApproximation}, we have
	\[
		\Var{\appxPrtFrac[i]} 
		= \frac{1}{\numSamples^2} \sum_{j \in [\numSamples]}  \Var{\indicator{\polyfam^{(j)} \in \polyfams[][\Uclique{i-1}]}} 
		= \frac{1}{\numSamples} \E{\appxPrtFrac[i]} (1-\E{\appxPrtFrac[i]}).
	\]
	Noting that $\E{\appxPrtFrac[i]} \ge \big( 1 - \err/(5 \numberOfCliques) \big) \prtFrac[i] \ge 4/(5 \prt_{\max})$, we bound
	\[
		\frac{\Var{\appxPrtFrac[i]}}{\E{\appxPrtFrac[i]}^2} 
		= \frac{1}{\numSamples \E{\appxPrtFrac[i]}} - \frac{1}{\numSamples} 
		\le \frac{5 \prt_{\max}}{4 \numSamples} .
	\]
	Hence, using that, for all $x \in [0, 1]$ and all $k \in \N_{> 0}$, it holds that $\eulerE^{x/(k+1)} \le 1 + x/k$, we obtain
	\begin{align*}
		&\Pr{|\appxPrtFrac - \E{\appxPrtFrac}| \ge \frac{\err}{5} \E{\appxPrtFrac}}
		\le \frac{25}{\err^2} \left( \left( 1 + \frac{5 \prt_{\max}}{4 \numSamples} \right)^{\numberOfCliques} - 1 \right)
		\le \frac{25}{\err^2} \left( \eulerE^{5 \prt_{\max} \numberOfCliques/(4 \numSamples)} - 1 \right) \\
		&\quad \le \frac{25}{\err^2} \frac{5 \prt_{\max} \numberOfCliques}{4 \numSamples - 1} .
	\end{align*}
	Due to our choice of~\numSamples, and using the same approach as in bounding \cref{lemma:appx_algo:eq1}, with probability at least $3/4$, it holds that
	\begin{align} \label{lemma:appx_algo:eq2}
		\eulerE^{-\err/4} \E{\appxPrtFrac} \leq \left( 1 - \frac{\err}{5} \right) \E{\appxPrtFrac} \le \appxPrtFrac \le \left( 1 + \frac{\err}{5} \right) \E{\appxPrtFrac} \leq \eulerE^{\err/4} \E{\appxPrtFrac}.
	\end{align}
	
	\medskip
	\emph{Combining the results.}
	Combining \cref{lemma:appx_algo:eq1,lemma:appx_algo:eq2} yields that
	\begin{align*}
		(1 - \err) {\prt} \le \eulerE^{-\err/2} {\prt} \le \frac{1}{\appxPrtFrac} \le  \eulerE^{\err/2} {\prt} \le  (1+\err) {\prt}
	\end{align*}
	with probability at least $3/4$, which concludes the proof.
\end{proof}

\paragraph*{\proofOf{\Cref{thm:appx_partition_function}}}
\approximationPartitionFunction*
\begin{proof}
	The statement follows from \Cref{lemma:appx_algo}, choosing the parameters of \Cref{algo:appx_prt} accordingly.
	Note that \Cref{thm:sampling} both assume that $\prt_{\max} \in \poly{\numberOfCliques}$.
	This implies that $\numSamples \in \poly{\numberOfCliques/\err}$ and that we can sample $\err_{\numSamples}$-approximately from $\gibbsDistribution$ in time $\poly{\numberOfCliques/\err}$. 
	Note that, for all $i \in [\numberOfCliques]$, the same holds for $\gibbsDistribution[][\Uclique{i}]$, as this only requires the Markov chain to ignore some of the polymer cliques in each step.
\end{proof}

\section{\namedAppendix{\nameref{sec:truncation}}} \label{appendix:truncation}
Here we give all proofs that are omitted in \Cref{sec:truncation}.

\paragraph*{\proofOf{\Cref{lemma:clique_truncation}}}
\cliqueTruncationLemma*
\begin{proof}
	We start by proving $\eulerE^{- \err} \le \prt_{\le k}/\prt \le 1$.
	Since $\prt_{\le k} \le \prt$, as removing polymers does not increase the partition function, it remains to show that $\prt \le \eulerE^{\err} \prt_{\le k}$.
	
	We observe that $\prt \le \prt_{\le k} \prt_{> k}$ with equality if and only if, for all $\polyfam \in \polyfams_{\le k}$ and all $\polyfam' \in \polyfams_{>k}$, it holds that $\polyfam \cup \polyfam' \in \polyfams$.
	We proceed by showing that $\prt_{> k} \le \eulerE^{\err}$.
	
	Note that $\polys^{>k} = \bigcup_{i \in [\numberOfCliques]} \clique_i^{> k}$ and that each polymer family in $\polyfams_{>k}$ contains at most one polymer from each $\clique_i^{> k}$.
	Thus, we obtain
	\[
		\prt_{> k} 
		\le \prod_{i \in [\numberOfCliques]} \partitionFunction[][\clique_i^{> k}]
		= \prod_{i \in [\numberOfCliques]} \left( 1 + \sum_{\gamma \in \clique_i^{> k}} w_{\gamma} \right) .
	\]
	
	Due to \cref{eq:truncationLemmaAssumption}, we get $\prt_{> k} \le ( 1 + \err/m )^m \le \eulerE^{\err}$,	which proves the first claim.
	
	Observing that
	\[
		\dtv{\gibbs}{\gibbs_{\le k}} = \frac{\prt - \prt_{\le k}}{\prt} \le 1 - \eulerE^{- \err} \le \err 
	\]
	proves the second claim.
\end{proof}

\paragraph*{\proofOf{\Cref{lemma:ctc}}}
\ctcLemma*
\begin{proof}
    Let $\err' \in (0, 1)$ and $k \ge g^{-1} \left( B/\err' \right)$.
	Due the \ctc{} and the monotonicity of~$g$, we observe that
	\begin{align*}
		g(k) \sum_{\gamma \in \clique_i^{>k}} w_{\gamma} \le \sum_{\gamma \in \clique_i^{>k}} g(\size{\gamma}) w_{\gamma} \le \sum_{\gamma \in \clique_i} g(\size{\gamma}) w_{\gamma} \le B.
	\end{align*}
	As $g$ is positive, dividing by $g(k)$ yields $\sum_{\gamma \in \clique_i^{>k}} w_{\gamma} \le B/g(k)$.
	Substituting our bound for~$k$ and noting that~$g$ is invertible, we conclude that
	\begin{equation*}
		\sum_{\gamma \in \clique_i^{>k}} w_{\gamma} 
		\le \frac{B}{g \left( g^{-1} \left( \frac{B}{\err'} \right) \right)}
		= \err',
	\end{equation*}
	which proves the claim.
\end{proof}

\paragraph*{\proofOf{\Cref{thm:sampling_trunc}}}
\samplingTruncTheorem*
\begin{proof}
	As in the proof of \Cref{thm:sampling}, we consider the polymer Markov chain~\markov[\polymerModel].
	Further, let $k = g^{-1}(2Bm/\err)$, let $\markov_{k}$ denote the polymer Markov chain on $(\polys_{\le k}, w, \ncomp)$, and let~$P_k$ denote its transitions.
	We aim to run $\markov_{k}$ for at least $t^* = \mix{\markov}{\err/2}$ iterations, starting from $\emptyset \in \polyfams_{\le k}$.
	
	We prove that $\dtv{\gibbs}{P_k^{t^*} (\emptyset, \cdot)} \le \err$.
	By the triangle inequality, we obtain 
	\[
		\dtv{\gibbs}{P_k^{t^*} (\emptyset, \cdot)} \le \dtv{\gibbs}{\gibbs_{\le k}} + \dtv{\gibbs_{\le k}}{P_k^{t^*} (\emptyset, \cdot)}.	
	\]
	By our choice of $k$ and by \Cref{lemma:model_truncation} together with \cref{thm:samplingTrunc:ctc}, we get that $\dtv{\gibbs}{\gibbs_{\le k}} \le \err/2$.
	Further, note that truncation preserves the \pmc{} for the same function $f$ and does not increase any quantity that is used for bounding the mixing time.
	Thus, $\mix{\markov_k}{\err/2} \le \mix{\markov[\polymerModel]}{\err/2}$, and we obtain $\dtv{\gibbs_{\le k}}{P_k^{t^*} (\emptyset, \cdot)} \le \err/2$ for our choice of $t^*$.
	
	It remains to show that the runtime is bounded by $\poly{\numberOfCliques/\err}$.
	Analogously to the proof of \Cref{thm:sampling}, due to \cref{thm:samplingTrunc:prt,thm:samplingTrunc:pmc}, we know that $\mix{\markov[\polymerModel]}{\err/2} \in \poly{\numberOfCliques/\err}$, which implies $\mix{\markov_k}{\err/2} \in \poly{\numberOfCliques/\err}$.
	Also analogously, it holds that each step can be done in $\poly{\numberOfCliques}$, except for sampling, for all $i \in [\numberOfCliques]$, from~$\gibbsDistribution[][\polymerClique[i]]$.
	However, note that, for all $i \in [\numberOfCliques]$, we only need to sample from $\gibbsDistribution[][\clique_i^{\le k}]$.
	We do so by enumerating $\clique_i^{\le k}$ in time $t(k)$.
	By our choice of $k$ and by \cref{thm:samplingTrunc:ctc}, this takes time at most
	\[
		t(k) = t \left( g^{-1} \left( \frac{2 B m}{\err} \right) \right) \in \poly{\frac{\numberOfCliques}{\err}},	
	\] 
	which proves that we can $\err$-approximately sample from $\gibbs$ in the desired runtime.
	
	Showing that we can $\err$-approximate $\prt$ in time $\poly{\numberOfCliques/\err}$ is done analogously.
	By \Cref{lemma:model_truncation} and \cref{thm:samplingTrunc:ctc}, we know that for $k = g^{-1}(2Bm/\err)$ it holds that $\eulerE^{- \err / 2} \le \prt_{\le k}/\prt \le 1$, which implies $\eulerE^{- \err / 2} \prt \le \prt_{\le k} \le \prt$.
	As argued above, the truncation of the polymer model $\polymerModel$ to this size~$k$ satisfies the conditions of \Cref{thm:sampling}, where the sampling from each clique is done by ignoring polymers larger than $k$.
	Thus, by \Cref{thm:appx_partition_function}, we obtain an $\err/2$-approximation for $\prt_{\le k}$ in time $\poly{2 \numberOfCliques/\err} = \poly{\numberOfCliques/\err}$.
	Noting that, for $\err \le 1$, it holds that
	\[
		1 - \err \le \left( 1 - \frac{\err}{2} \right) \eulerE^{- \err / 2}
		\text{ and }
		\left( 1 + \frac{\err}{2} \right) \le 1 + \err,
	\]
	which concludes the proof.
\end{proof}

\section{\namedAppendix{Hard-Core Model on Bipartite Expanders}}
\label{appendix:expanders}

In order to demonstrate how \Cref{thm:sampling_trunc} improves known bounds for the algorithmic use of polymer models, we investigate the hard-core model for high fugacity $\fugacity \in \R_{>0}$ on bipartite $\alpha$-expanders with bounded maximum degree $\Degree$.
For a graph $(V, E)$ and an $S \subseteq V$, let $\neighbors{S}[G]$ denote the set of all vertices that are adjacent to a vertex in~$S$.
\begin{definition}[bipartite $\alpha$-expander]
	Let $G=(V, E)$ be a bipartite graph with partition $V=\partition{\leftPart} \cup \partition{\rightPart}$.
	For all $i \in \{\leftPart, \rightPart\}$, we call $S \subseteq \partition{i}$ \emph{small} if and only if $\absolute{S} \le \absolute{\partition{i}}/2$.
	For all $\alpha \in (0, 1)$, graph~$G$ is a \emph{bipartite $\alpha$-expander} if and only if, for all small sets of vertices~$S$, it holds that $\absolute{\neighbors{S}[G]} \ge (1 + \alpha) \absolute{S}$. 
\end{definition}  

For any graph $G$, the hard-core partition function is a graph polynomial of some parameter $\fugacity \in \R_{>0}$, called fugacity.
Let $\mathcal{I}_{G}$ be the set of all independent sets in $G$.
The hard-core partition function for fugacity $\fugacity$ is now formally defined as
\[
\hcPartitionFunction{\fugacity}[G] = \sum_{I \in \mathcal{I}_{G}} \fugacity^{\absolute{I}} .
\] 

We approximate $\hcPartitionFunction{\fugacity}[G]$ in terms of the partition function of two polymer models, constructed as proposed by Jenssen et~al.~\cite{JKP19}.
For a bipartite $\alpha$-expander $G$ with bounded degree $\Degree$, we consider the graph $G^2$, which is the graph with vertices $V$ and an edge between $v, u \in V$ if $v, u$ have at most distance $2$ in $G$.
For all $i \in \{\leftPart, \rightPart\}$, we define a polymer model $\polymerModelPart{i} = (\polysPart{i}, \weightPart{i}, \ncomp)$ as follows:
\begin{itemize}
	\item each polymer $\polymer \in \polysPart{i}$ is defined by a non-empty set of vertices $\polyVertex{\polymer} \subseteq \partition{i}$ such that $\polyVertex{\polymer}$ is small and induces a connected subgraph in $G^2$,
	\item for $\polymer \in \polysPart{i}$, let $\weightPart{i}[\polymer] = \fugacity^{\absolute{\polyVertex{\polymer}}} / \big( (1+\fugacity)^{\absolute{ \neighbors{\polyVertex{\polymer}}[G] }}\big)$, and
	\item two polymers $\polymer, \polymer'  \in \polysPart{i}$ are incompatible if and only if there are vertices $v \in \polyVertex{\polymer}, w \in \polyVertex{\polymer'}$ with graph distance at most~$1$ in~$G^2$.
\end{itemize}
To ease notation, for all $i \in \{\leftPart, \rightPart\}$, we write~$\partGibbs{i}$ and $\partPrt{i}$ instead of~$\gibbsDistribution[\polymerModelPart{i}]$ and~$\partitionFunction[\polymerModelPart{i}]$, respectively.

We use~$\polymerModelPart{\leftPart}$ and~$\polymerModelPart{\rightPart}$ for approximating the hard-core partition function of bipartite $\alpha$-expanders in the following sense.
\begin{lemma}[{\cite[Lemma~$19$]{JKP19}}]
	\label{lemma:hc_alpha}
	Given a bipartite $\alpha$-expander $G=(\partition{\leftPart} \cup \partition{\rightPart}, E)$ with $\absolute{\partition{\leftPart} \cup \partition{\rightPart}} = n$, let $\hcPartitionFunction{\fugacity}[G]$ denote its hard-core partition function with fugacity $\fugacity \in \R_{>0}$, and let the polymer models $\polymerModelPart{\leftPart}, \polymerModelPart{\rightPart}$ be defined as above.
	For all $\fugacity \ge \eulerE^{11/\alpha}$, it holds that
	\[
	(1 - \eulerE^{-n}) \hcPartitionFunction{\fugacity}[G] 
	\le \left( 1+\fugacity \right)^{\absolute{\partition{\rightPart}}} \partPrt{\leftPart} 
	+ \left( 1+\fugacity \right)^{\absolute{\partition{\leftPart}}} \partPrt{\rightPart}
	\le (1 + \eulerE^{-n}) \hcPartitionFunction{\fugacity}[G] .
	\]
\end{lemma}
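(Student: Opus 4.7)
The plan is a two-stage argument: first, reinterpret both $(1+\fugacity)^{|\partition{\rightPart}|} \partPrt{\leftPart}$ and $(1+\fugacity)^{|\partition{\leftPart}|} \partPrt{\rightPart}$ as weighted sums over two overlapping classes of independent sets of $G$; and second, control the overcount and undercount, each by an $\eulerE^{-n} \hcPartitionFunction{\fugacity}[G]$ term, using the $\alpha$-expansion property together with the hypothesis $\fugacity \ge \eulerE^{11/\alpha}$.

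For the first stage, I would set up a weight-preserving bijection between polymer families in $\polymerModelPart{\leftPart}$ and subsets $I_{\leftPart} \subseteq \partition{\leftPart}$ whose $G^2$-components are all small: given a family $\polyfam \in \polyfams[\polymerModelPart{\leftPart}]$, the union $\bigcup_{\polymer \in \polyfam} \polyVertex{\polymer}$ has exactly the sets $\{\polyVertex{\polymer}\}_\polymer$ as its $G^2$-components by the compatibility condition, and these sets therefore have pairwise disjoint $G$-neighborhoods in~$\partition{\rightPart}$. Consequently $\prod_\polymer \weightPart{\leftPart}[\polymer] = \fugacity^{|I_{\leftPart}|}/(1+\fugacity)^{|\neighbors{I_{\leftPart}}[G]|}$. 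Summing the independent choices of $I_{\rightPart} \subseteq \partition{\rightPart} \setminus \neighbors{I_{\leftPart}}[G]$ produces the factor $(1+\fugacity)^{|\partition{\rightPart}| - |\neighbors{I_{\leftPart}}[G]|}$, which gives $(1+\fugacity)^{|\partition{\rightPart}|} \partPrt{\leftPart} = \sum_{I \in \mathcal{A}} \fugacity^{|I|} =: A$, where $\mathcal{A}$ is the class of independent sets $I$ all of whose $G^2$-components of $I \cap \partition{\leftPart}$ are small. Symmetrically, $(1+\fugacity)^{|\partition{\leftPart}|} \partPrt{\rightPart} = \sum_{I \in \mathcal{B}} \fugacity^{|I|} =: B$.

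For the second stage, since each $I \in \mathcal{I}_G$ is counted once for every class it lies in, $A + B = \hcPartitionFunction{\fugacity}[G] + C - N$, where $C = \sum_{I \in \mathcal{A} \cap \mathcal{B}} \fugacity^{|I|}$ is the doubly counted weight and $N = \sum_{I \notin \mathcal{A} \cup \mathcal{B}} \fugacity^{|I|}$ is the missing weight; hence it suffices to bound both $C$ and $N$ by $\eulerE^{-n} \hcPartitionFunction{\fugacity}[G]$. The bound on $C$ is the routine step: for $I \in \mathcal{A} \cap \mathcal{B}$ the disjoint-neighborhood property applied to each small $G^2$-component yields $|\neighbors{I \cap \partition{\leftPart}}[G]| \ge (1+\alpha)|I \cap \partition{\leftPart}|$ and symmetrically, while the independence constraints give $|\neighbors{I \cap \partition{\leftPart}}[G]| \le |\partition{\rightPart}| - |I \cap \partition{\rightPart}|$ and symmetrically. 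Adding these yields $|I| \le n/(2+\alpha)$, so $C \le 2^n \fugacity^{n/(2+\alpha)}$; comparing with the trivial lower bound $\hcPartitionFunction{\fugacity}[G] \ge (1+\fugacity)^{n/2}$ obtained from independent sets inside the larger side, a direct calculation shows that the ratio is at most $\eulerE^{-n}$ whenever $\fugacity \ge \eulerE^{11/\alpha}$.

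The main obstacle is bounding $N$. Each $I \in N$ has a $G^2$-component $S_{\leftPart} \subseteq I \cap \partition{\leftPart}$ with $|S_{\leftPart}| > |\partition{\leftPart}|/2$, and similarly an $S_{\rightPart}$, with independence forcing $\neighbors{S_{\leftPart}}[G] \cap S_{\rightPart} = \emptyset$ and hence $|\neighbors{S_{\leftPart}}[G]| < |\partition{\rightPart}|/2$ (symmetrically for $S_{\rightPart}$). I would run a Peierls-style compression mapping $I \mapsto I^* = (I \setminus S_{\leftPart}) \cup \big(\partition{\rightPart} \setminus \neighbors{I \setminus S_{\leftPart}}[G]\big)$, which lies in $\mathcal{A}$; applying expansion to the small set $\partition{\leftPart} \setminus S_{\leftPart}$ (lower bounding $|\neighbors{\partition{\leftPart} \setminus S_{\leftPart}}[G]|$) together with the bound $|\neighbors{S_{\leftPart}}[G]| < |\partition{\rightPart}|/2$ yields that the weight ratio $\fugacity^{|I^*| - |I|}$ is exponentially favourable. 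The delicate step is to account for the number of preimages of the compression (the possible choices of $S_{\leftPart}$ of a given size) and to combine this overcount with the weight gain so that the resulting estimate absorbs cleanly into $\eulerE^{-n} \hcPartitionFunction{\fugacity}[G]$ under the same threshold $\fugacity \ge \eulerE^{11/\alpha}$.
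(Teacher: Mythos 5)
The paper does not prove this lemma---it is cited verbatim from Jenssen, Keevash, and Perkins~\cite{JKP19}---so there is no proof in the paper to compare against; I assess your proposal on its own merits.

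Your first stage is correct. The $G^2$-components of $\bigcup_{\polymer \in \polyfam} \polyVertex{\polymer}$ are exactly the sets $\polyVertex{\polymer}$, their $G$-neighborhoods in $\partition{\rightPart}$ are pairwise disjoint, and summing over the free choices of $I_{\rightPart} \subseteq \partition{\rightPart} \setminus N_G(I_{\leftPart})$ does give $(1+\fugacity)^{|\partition{\rightPart}|}\partPrt{\leftPart} = \sum_{I \in \mathcal{A}} \fugacity^{|I|}$ with $\mathcal{A}$ as you define it. Your treatment of $C$ is also sound: the four inequalities combine to $|I| \le n/(2+\alpha)$, and the comparison $C/\hcPartitionFunction{\fugacity}[G] \le 2^n \fugacity^{n/(2+\alpha)}/(1+\fugacity)^{n/2} \le \eulerE^{-n}$ does follow from $\fugacity \ge \eulerE^{11/\alpha}$ and $\alpha \le 1$.

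The gap is in $N$. What you call the ``main obstacle'' is not an obstacle: the $\alpha$-expansion hypothesis, combined with the smallness threshold $|S| \le |\partition{i}|/2$, already forces $N = 0$, i.e.\ $\mathcal{A} \cup \mathcal{B} = \mathcal{I}_G$ for $n > 2$. Indeed, suppose some independent $I$ had a large $G^2$-component $S_{\leftPart}$ of $I \cap \partition{\leftPart}$ and a large $S_{\rightPart}$ of $I \cap \partition{\rightPart}$. Independence gives $N_G(S_{\leftPart}) \subseteq \partition{\rightPart} \setminus S_{\rightPart}$, so $|N_G(S_{\leftPart})| \le \lceil|\partition{\rightPart}|/2\rceil - 1$. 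Any $S'_{\leftPart} \subseteq S_{\leftPart}$ of size $\lfloor|\partition{\leftPart}|/2\rfloor$ is small, so expansion yields $(1+\alpha)\lfloor|\partition{\leftPart}|/2\rfloor \le |N_G(S'_{\leftPart})| \le \lceil|\partition{\rightPart}|/2\rceil - 1$; the symmetric inequality holds with $\leftPart$ and $\rightPart$ swapped. Adding and using $\lfloor a/2\rfloor + \lceil a/2\rceil = a$ gives $(2+\alpha)\bigl(\lfloor|\partition{\leftPart}|/2\rfloor + \lfloor|\partition{\rightPart}|/2\rfloor\bigr) \le n - 2$, hence $(2+\alpha)(n-2)/2 \le n-2$, i.e.\ $\alpha \le 0$, a contradiction. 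So $N = 0$, the lower bound $A + B = \hcPartitionFunction{\fugacity}[G] + C \ge \hcPartitionFunction{\fugacity}[G]$ is immediate, and only the $C$-bound is ever needed. Your Peierls compression is therefore superfluous, and as written it is also not carried through (neither the weight gain nor the preimage count is established), so the proposal does not close the argument; you are missing the structural observation that large defect components on both sides are incompatible with expansion.
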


To apply \Cref{thm:sampling_trunc}, we have to fix a polymer clique cover $\clique$ for each polymer model $\polymerModelPart{i}$ with $i \in \{\leftPart, \rightPart\}$.
Based on the incompatibility relation, a natural choice is to define, for each $v \in \partition{i}$, a clique~$\polymerClique[v]$ such that $\polymer \in \polymerClique[v]$ if and only if $v \in \polyVertex{\polymer}$.
As we need to verify the \pmc, it is useful to have a bound on the number of incompatible polymers, which the following lemma provides.
\begin{lemma}[{\cite[Lemma~$2.1$]{borgs2013left}}]
	\label{lemma:count_vertex}
	For an undirected graph $G=(V, E)$ with maximum degree~$\Degree$ and for all $v \in V$, the number of vertex-induced connected subgraphs that contain~$v$ and have at most $k \in \N_{>0}$ vertices is bounded from above by $\eulerE^k \Degree^{k-1}/\big(k^{3/2} \sqrt{2 \circlePi}\big)$.
\end{lemma}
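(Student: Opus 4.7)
The plan is to reduce counting connected induced subgraphs containing $v$ to enumerating rooted subtrees of an infinite bounded-arity branching tree, and then apply a Fuss--Catalan enumeration combined with Stirling's formula.

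First, I would fix a total order on $V$ and, for each connected vertex-induced subgraph $H$ of size $j \le k$ containing $v$, construct its canonical BFS spanning tree $T_H$ rooted at $v$ with ties broken by the order. Because $H$ is vertex-induced, it is uniquely determined by $V(T_H)$, so $H \mapsto T_H$ is injective. The tree $T_H$ lifts canonically and injectively to a rooted plane subtree of the universal cover $\mathcal{T}$ of $(G, v)$---the infinite rooted tree in which the root has at most $\Degree$ children and every non-root vertex has at most $\Degree - 1$ children---because the covering map restricted to the image reconstructs $T_H$. It therefore suffices to bound $\sum_{j \in [k]} t_j$, where $t_j$ is the number of rooted plane subtrees of $\mathcal{T}$ of size~$j$.

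Second, I would compute $t_j$ via Lagrange inversion. Treating the root identically to the non-root vertices (which costs at most a factor $\Degree/(\Degree - 1) \le 2$ absorbed into the slack of the final estimate), the generating function $T(x) = \sum_{j \ge 1} t_j x^j$ satisfies $T(x) = x(1 + T(x))^{\Degree - 1}$, and Lagrange inversion yields
\[
t_j = \frac{1}{j} \binom{(\Degree - 1) j}{j - 1}.
\]
The ratio $t_{j+1}/t_j$ tends to $(\Degree - 1)^{\Degree - 1}/(\Degree - 2)^{\Degree - 2} \ge \eulerE$ for $\Degree \ge 3$ (and the bound is trivial for $\Degree \le 2$), so $\sum_{j \le k} t_j$ is within a bounded factor of $t_k$.

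Third, I would bound $t_k$ using the sharp Stirling approximation
\[
\binom{m}{j} = (1+\smallO{1}) \cdot \frac{m^m}{j^j (m - j)^{m - j}} \cdot \sqrt{\frac{m}{2 \circlePi j (m - j)}}
\]
applied with $m = (\Degree - 1) k$ and $j = k - 1$. Combined with the elementary inequality $((\Degree - 1)/(\Degree - 2))^{(\Degree - 2) k} \le \eulerE^k$ and routine algebraic simplification, the $\sqrt{k}^{-1}$ correction factor from Stirling multiplies with the explicit $1/k$ prefactor from Lagrange inversion to give the claimed $k^{-3/2}$ factor, and one obtains $t_k \le \eulerE^k \Degree^{k-1}/(k^{3/2} \sqrt{2 \circlePi})$. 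The main obstacle is pinning down the precise constant $1/\sqrt{2 \circlePi}$: this requires the sharp Stirling form rather than the loose estimate $\binom{m}{j} \le (\eulerE m/j)^j$, and the slack introduced by the partial summation over $j \le k$, the root-degree adjustment in the universal cover, and the passage from $\binom{(\Degree-1)k}{k-1}$ to its leading-order Stirling expression must all be absorbed carefully into the constant.
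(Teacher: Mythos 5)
The paper cites this lemma from~\cite{borgs2013left} and gives no proof of its own, so there is no in-paper argument to compare against. Your universal-cover and Lagrange-inversion route is the standard one, and the identity $t_j = \frac{1}{j}\binom{(\Degree-1)j}{j-1}$ is correct for the tree in which every vertex, including the root, has $\Degree-1$ children. The problem is that the claimed constant $1/\sqrt{2\circlePi}$ leaves almost no slack, so the several overheads you propose to ``absorb into the constant'' cannot in fact be absorbed.

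Concretely, the root-degree correction is not a factor of $\Degree/(\Degree-1) \le 2$. Lagrange inversion on the actual universal-cover tree (root with $\Degree$ children, all other vertices with $\Degree-1$) gives the subtree count $b_k = \frac{1}{k}\binom{(\Degree-1)k}{k-1} + \frac{2}{k}\binom{(\Degree-1)k}{k-2}$, so $b_k/t_k = 1 + \frac{2(k-1)}{(\Degree-2)k+2}$; this equals $\Degree/(\Degree-1)$ only at $k=2$ and grows toward $1 + 2/(\Degree-2)$, reaching $3$ at $\Degree = 3$. There is no room for even a factor of $1.05$: for $\Degree = 3$, $k = 2$ the right-hand side of the lemma is $\eulerE^2\cdot 3/(2^{3/2}\sqrt{2\circlePi}) \approx 3.13$, while a vertex of $K_4$ already lies in exactly $3$ connected induced subgraphs on two vertices, a margin of about $4\%$. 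Hence neither the root factor, nor the partial-summation factor $\sum_{j\le k}t_j/t_k$ (already $3/2$ at $k=2$), nor the $(1+\smallO{1})$ error from the asymptotic Stirling form you invoke can be discharged ``carefully into the constant.'' Worse, the ``at most $k$ vertices'' reading you set out to prove is actually false: for $K_4$ and $k=2$ there are $4$ connected induced subgraphs on at most two vertices containing $v$, exceeding $3.13$. The statement must be read as ``exactly $k$ vertices,'' which is how it is applied in the proof of Proposition~\ref{thm:hc_appx}. A correct proof therefore has to bound $b_k$ directly (not $t_k$ times a fudge factor), using explicit two-sided Stirling inequalities rather than the $(1+\smallO{1})$ asymptotic, and must check the small-$k$ and $\Degree\in\{1,2\}$ cases by hand.
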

Commonly, the bound $(\eulerE \Degree)^{k-1}/2$ is applied, as it is more convenient to work with.
However, this bound actually only holds for $k \ge 2$. 
Further, note that the original paper used a weaker bound, namely $(\eulerE \Degree)^{k}$.
Although this bound holds for all $k \in \N_{>0}$, it yields a much worse dependency on $\Degree$.
For a fair comparison, we added the result of refined calculations for the approach by Jenssen et~al.~\cite{JKP19} to \Cref{table:bounds}. 

In order to apply truncation, we further need a notion of size for polymers.
An obvious choice is to set $\size{\polymer} = \absolute{\polyVertex{\polymer}}$.
The following lemma then bounds the time for enumerating polymers in a clique up to some size $k \in \N_{>0}$.
\begin{lemma}[{\cite[Lemma~$3.7$]{PR17}}]
	\label{lemma:enum_vertex}
	Let $G=(V, E)$ be an undirected graph with maximum degree~$\Degree$, and let $v \in V$.
	There is an algorithm that enumerates all connected, vertex-induced sugraphs of $G$ that contain $v$ and have at most $k \in \N_{>0}$ vertices in time $\eulerE^{\bigO{ k \log(\Degree)}}$.
\end{lemma}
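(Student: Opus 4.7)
The plan is to exhibit a branch-and-prune enumeration of connected induced subgraphs containing~$v$ of size at most~$k$, and bound its running time as the product of the counting bound from \Cref{lemma:count_vertex} and a polynomial per-call overhead.

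First, I would fix an arbitrary total order on~$V$ and maintain a pair $(S, F)$, where~$S$ is a connected vertex-induced subgraph of~$G$ containing~$v$ and $F \subseteq V \setminus S$ is a set of forbidden vertices, starting from $(\{v\}, \emptyset)$. A recursive call proceeds as follows: if $|S| = k$ or $\neighbors{S}[G] \setminus F = \emptyset$, then output~$S$ and return; otherwise let~$u$ be the minimum vertex of $\neighbors{S}[G] \setminus F$ in the fixed order, and recurse on both $(S \cup \{u\}, F)$ and $(S, F \cup \{u\})$. By construction every output is a connected vertex-induced subgraph containing~$v$ of size at most~$k$.

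Next, I would argue that the map from leaves of the resulting binary recursion tree to outputs is a bijection onto the set of valid targets. Given any target~$S^{*}$, trace a root-to-leaf path by choosing the ``add'' branch whenever the deterministic pivot~$u$ lies in~$S^{*}$ and the ``forbid'' branch otherwise. The invariants $S \subseteq S^{*}$ and $F \cap S^{*} = \emptyset$ are preserved throughout, and by connectivity of~$S^{*}$ the algorithm cannot terminate with $S \subsetneq S^{*}$: any vertex of $S^{*} \setminus S$ has a $v$--path in~$S^{*}$ whose first vertex outside~$S$ lies in $\neighbors{S}[G] \cap S^{*} \subseteq \neighbors{S}[G] \setminus F$, keeping the boundary non-empty. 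Thus the traced path terminates with $S = S^{*}$, and the deterministic pivot choice makes it unique.

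Finally, I would combine this bijection with \Cref{lemma:count_vertex} to bound the number~$N$ of targets by $\eulerE^k \Degree^{k-1}/(k^{3/2}\sqrt{2 \circlePi}) = \eulerE^{\bigO{k \log \Degree}}$. Since the recursion tree is binary with~$N$ leaves, it has at most $2N - 1$ nodes, and each recursive call can be executed in time $\poly{k, \Degree}$ by maintaining~$S$, $F$, and the current boundary incrementally with standard data structures---crucially, only vertices within graph distance~$k$ of~$v$ are ever touched, so~$|V|$ does not enter the running time. The total time is therefore $\eulerE^{\bigO{k \log \Degree}} \cdot \poly{k, \Degree} = \eulerE^{\bigO{k \log \Degree}}$. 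The main subtlety will be the bijection argument: without the canonical minimum-pivot rule, distinct decision sequences could reach the same $(S, F)$ from different pivot orders, inflating the leaf count beyond~$N$ and ruining the running-time bound, so the deterministic pivot choice is what makes the analysis tight.
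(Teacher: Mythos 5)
The paper does not include a proof of this lemma; it is cited verbatim from Patel and Regts~\cite{PR17}. Your branch-and-prune reconstruction with the deterministic minimum-pivot rule is a correct, self-contained proof of the cited result and matches the standard enumeration technique: the forced-path argument establishes the bijection between recursion-tree leaves and connected induced subgraphs of size at most~$k$ containing~$v$ (surjectivity follows from connectivity of the target, since the boundary $\neighbors{S}[G] \setminus F$ cannot empty while $S \subsetneq S^{*}$; injectivity follows because the canonical pivot forces each decision, a pivot $u \in S^{*}$ must be added since a forbidden vertex never re-enters~$S$, and a pivot $u \notin S^{*}$ must be forbidden to preserve $S \subseteq S^{*}$), the leaf count is controlled by \Cref{lemma:count_vertex}, and the per-call overhead is $\poly{k, \Degree}$ since $S$, $F$, and the boundary all lie within distance~$k$ of~$v$. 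Combined, this yields the stated $\eulerE^{\bigO{k \log(\Degree)}}$ running time.
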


We now prove our bound on $\fugacity$ for an efficient approximation of the hard-core partition function on bipartite $\alpha$-expanders.
Most of the calculations are similar to those of Jenssen et~al.~\cite{JKP19}, except that we use our newly obtained conditions.
\begin{proposition}
	\label{thm:hc_appx}
	Let $G(\partition{\leftPart} \cup \partition{\rightPart}, E)$ be a bipartite $\alpha$-expander with $\absolute{\partition{\leftPart} \cup \partition{\rightPart}} = n$ and with maximum degree $\Degree \in \N_{>0}$.
	For $\fugacity \ge \max\{( \eulerE  \Degree^2/0.8 )^{1/\alpha}, \eulerE^{11/\alpha}\}$ and for all $\err \in (0, 1]$, there is an FPRAS for $\hcPartitionFunction{\fugacity}[G]$ with runtime $\left( n/\err \right)^{\bigO{\ln(\Degree)}}$.
\end{proposition}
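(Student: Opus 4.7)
The plan is to reduce the approximation of $\hcPartitionFunction{\fugacity}[G]$ to separately approximating the two polymer partition functions $\partPrt{\leftPart}$ and $\partPrt{\rightPart}$ via \Cref{thm:sampling_trunc}, and then recombining them through \Cref{lemma:hc_alpha}. For each $i \in \{\leftPart, \rightPart\}$, I equip $\polymerModelPart{i}$ with the vertex-indexed clique cover $\polymerClique = \{\polymerClique[v]\}_{v \in \partition{i}}$, where $\polymerClique[v] = \set{\polymer \in \polysPart{i}}{v \in \polyVertex{\polymer}}$, of size $m = \absolute{\partition{i}} \le n$, and with the natural size function $\size{\polymer} = \absolute{\polyVertex{\polymer}}$. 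For the condition of \Cref{remark:spmc}, I choose $f(\polymer) = \absolute{\polyVertex{\polymer}}$, which lies in $[1, n]$ and therefore falls within the exponential envelope required by \cref{thm:samplingTrunc:pmc}. Since the singleton polymer with $\polyVertex{\polymer} = \{v\}$ minimises $f$ on $\polymerClique[v]$, \Cref{remark:spmc} additionally yields $\partPrt{i}[\polymerClique[v]] \le 2$, settling \cref{thm:samplingTrunc:prt} as well.

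The heart of the argument is verifying the condition of \Cref{remark:spmc}, namely $\sum_{\polymer' \ncomp \polymer} \absolute{\polyVertex{\polymer'}} \weightPart{i}[\polymer'] \le \absolute{\polyVertex{\polymer}}$ for every $\polymer \in \polysPart{i}$. Each incompatible $\polymer'$ carries an anchor vertex in the closed $G^2$-neighbourhood of $\polyVertex{\polymer}$, a set of size at most $\absolute{\polyVertex{\polymer}} (1 + \Degree(\Degree-1))$; grouping the sum by this anchor, invoking \Cref{lemma:count_vertex} on $G^2[\partition{i}]$ (of maximum degree at most $\Degree(\Degree-1)$), and combining with the $\alpha$-expander weight bound $\weightPart{i}[\polymer'] \le \fugacity^{\absolute{\polyVertex{\polymer'}}}/(1+\fugacity)^{(1+\alpha)\absolute{\polyVertex{\polymer'}}}$, the inequality reduces to controlling $\sum_{k\ge 1} y^k/\sqrt{k}$ for $y = \eulerE \Degree(\Degree-1)\fugacity/(1+\fugacity)^{1+\alpha}$. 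The hypothesis $\fugacity \ge (\eulerE \Degree^2/0.8)^{1/\alpha}$ (together with $\fugacity/(1+\fugacity)^{1+\alpha} \le \fugacity^{-\alpha}$) forces $y \le 0.8$, and a short numerical check gives $\sum_{k\ge1} 0.8^k/\sqrt{k} < \sqrt{2\pi}$, which matches the required constant after absorbing the factor $(1+\Degree(\Degree-1))/(\Degree(\Degree-1))$; the auxiliary bound $\fugacity \ge \eulerE^{11/\alpha}$ absorbs the small-$\Degree$ corner cases where that margin tightens.

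For \cref{thm:samplingTrunc:ctc}, I take $g(x) = \eulerE^{c x}$ with a small $c > 0$ (e.g.\ $c = \ln(1/0.8)$), so the analogous sum $\sum_{\polymer \in \polymerClique[v]} g(\size{\polymer}) \weightPart{i}[\polymer]$ becomes a series in $\eulerE^{1+c}\Degree(\Degree-1)\fugacity/(1+\fugacity)^{1+\alpha}$, still at most~$1$ under the fugacity assumption, so the \ctc{} holds with a constant bound $B$. The truncation threshold $g^{-1}(Bm/\err) \in \bigO{\ln(n/\err)}$ is logarithmic, and \Cref{lemma:enum_vertex} applied to $G^2$ (of maximum degree at most $\Degree^2$) enumerates each $\polymerClique[v]^{\le k}$ in time $\eulerE^{\bigO{\ln(n/\err)\ln\Degree}} = (n/\err)^{\bigO{\ln\Degree}}$. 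This enumeration simultaneously certifies computational feasibility and enables exact sampling from $\gibbsDistribution[][\polymerClique[v]^{\le k}]$. Applying \Cref{thm:sampling_trunc} then yields, for each $i$, a randomized $(\err/4)$-approximation of $\partPrt{i}$ in time $(n/\err)^{\bigO{\ln\Degree}}$, and combining them linearly in accordance with \Cref{lemma:hc_alpha} (its additive $\eulerE^{-n}$ error is absorbed using $\fugacity \ge \eulerE^{11/\alpha}$) produces the advertised FPRAS for $\hcPartitionFunction{\fugacity}[G]$.

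I expect the main obstacle to be the joint calibration of the condition of \Cref{remark:spmc} and the \ctc: both reduce to essentially the same geometric-type series (up to the factor $\eulerE^c$), so the improvement factor $1/0.8$ relative to the cluster-expansion threshold $\fugacity > (\eulerE^2 \Degree^2)^{1/\alpha}$ of~\cite{JKP19} has to come entirely from the slack in $\sum_{k\ge 1} 0.8^k/\sqrt{k} < \sqrt{2\pi}$, and $c$ must be chosen small enough to preserve convergence of the \ctc{} series while keeping the truncation size logarithmic. Tracking all multiplicative constants consistently with the degree-counting bound from \Cref{lemma:count_vertex} is the finicky part; everything else is a routine orchestration of the results from \Cref{sec:algo,sec:truncation}.
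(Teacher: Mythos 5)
The proposal follows the paper's proof almost line for line: same polymer model, same vertex-indexed clique cover, same choice $f(\polymer)=\absolute{\polyVertex{\polymer}}$, essentially the same truncation function $g(x)=\eulerE^{cx}$, and the same appeals to \Cref{lemma:count_vertex}, \Cref{lemma:enum_vertex}, \Cref{remark:spmc}, \Cref{thm:sampling_trunc}, and \Cref{lemma:hc_alpha}. The minor numerical refinement (using $\Degree(\Degree-1)$ for the degree of $G^2[\partition{i}]$ rather than the paper's $\Degree^2$) is legitimate but gains nothing here and forces you to carry the stray factor $(1+\Degree(\Degree-1))/(\Degree(\Degree-1))$.

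However, there are two concrete errors in how you close the argument. First, you claim the threshold $\fugacity\ge\eulerE^{11/\alpha}$ ``absorbs the small-$\Degree$ corner cases where that margin tightens'' and, separately, that it ``absorbs'' the $\eulerE^{-n}$ error in \Cref{lemma:hc_alpha}. Neither is what this hypothesis does: $\fugacity\ge\eulerE^{11/\alpha}$ is simply a standing assumption under which \Cref{lemma:hc_alpha} holds at all; it plays no role in the numerics of the \pmc{} verification (indeed, for small $\Degree$ your extra factor $(1+\Degree(\Degree-1))/(\Degree(\Degree-1))$ is largest, but $y=\eulerE\Degree(\Degree-1)/\fugacity^\alpha$ is also smallest, so the margin actually \emph{loosens}, not tightens), and it cannot make the relative $(1\pm\eulerE^{-n})$ error of \Cref{lemma:hc_alpha} disappear. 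Second, and more substantively, you omit the regime of exponentially small $\err$. If $\err<\eulerE^{-n}$, an $\err$-relative approximation of $\hcPartitionFunction{\fugacity}[G]$ cannot be obtained from \Cref{lemma:hc_alpha} at all, because the lemma already introduces a $(1\pm\eulerE^{-n})$ multiplicative discrepancy. The paper handles this by branching: for $\err\in\bigO{\eulerE^{-n}}$ it falls back to brute-force enumeration of all independent sets (the $2^n$ cost fits in the claimed $(n/\err)^{\bigO{\ln\Degree}}$ budget in that regime), and only for $\err\ge 4\eulerE^{-n}$ does it invoke \Cref{lemma:hc_alpha} with a careful $(1-\eulerE^{-n})(1-\err/4)\ge 1-\err$ bookkeeping. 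You also silently assume that the two randomized $(\err/4)$-approximations of $\partPrt{\leftPart}$ and $\partPrt{\rightPart}$ can just be added; since each succeeds only with probability $3/4$, you must first boost each to success probability at least $\sqrt{3}/2$ (e.g.\ via a median of $\bigO{1}$ repetitions) so that the combined estimate succeeds with probability $3/4$, as the paper does. These are the pieces your proposal is missing.
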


\begin{proof}%[Proof of \Cref{thm:alpha_hc}]
	If $\err \in \bigO{\eulerE^{-n}}$, we compute $\hcPartitionFunction{\fugacity}[G]$ by enumerating all independent sets.
	Since there are at most $2^n$ independent sets, which is polynomial in $1/\eulerE^{-n}$, the statement then follows.
	It remains to analyze the case $\err \in \bigOmega{\eulerE^{-n}}$.
	To this end, assume that $\err \ge 4 \eulerE^{-n}$.
	
	By \Cref{lemma:hc_alpha}, $\hcPartitionFunction{\fugacity}[G]$ can be $\eulerE^{-n}$-approximated using $\partPrt{\leftPart}$ and $\partPrt{\rightPart}$.
	We aim for an $\err/4$-approximation of $\partPrt{\leftPart}$ and $\partPrt{\rightPart}$, each with failure probability at most $1 - \sqrt{3}/2$.
	Note that
	\[
	1 - \err \le \left( 1 - \eulerE^{-n}\right) \left( 1 - \frac{\err}{4} \right)
	\text{~~and~~}
	\left( 1 + \eulerE^{-n}\right) \left( 1 + \frac{\err}{4} \right) \le 1 + \err.
	\]
	Thus, with probability at least $(\sqrt{3}/2)^2 = 3/4$ the result is an $\err$-approximation of $\hcPartitionFunction{\fugacity}[G]$.
	We can obtain the desired error probability of at most $1 - \sqrt{3}/2$ for the approximations of $\partPrt{\leftPart}$ and $\partPrt{\rightPart}$ by taking the median of $\bigO{\ln ( 2/(2 - \sqrt{3}) )}[\big] = \bigO{1}$ independent approximations with failure probability at most $1/4$.
	
	Let $i \in \{\leftPart, \rightPart\}$.
	In order to approximate $\partPrt{i}$, we aim to apply \Cref{thm:sampling_trunc}.
	To this end, for all $v \in \partition{i}$, we define a polymer clique $\polymerClique[v]$ containing all polymers $\polymer \in \polysPart{i}$ with $v \in \polyVertex{\polymer}$.
	This results in a polymer clique cover of size $n$.
	
	We proceed by proving that the polymer model satisfies the \pmc for $f(\gamma) = \absolute{\polyVertex{\gamma}}$.
	We use \Cref{remark:spmc} to simplify this step. 
	This also implies that \cref{thm:samplingTrunc:prt} of \Cref{thm:sampling_trunc} is satisfied.
	For any $\polymer \in \polysPart{i}$ we start by bounding the set of polymers $\polymer' \ncomp \polymer$ by 
	\[
	\sum_{\polymer' \in \polysPart{i}\!\colon \polymer' \ncomp \polymer} f(\polymer') \weightPart{i}[\polymer']
	\le \sum_{v \in \neighbors{\polyVertex{\polymer}}[G^2]} \sum_{\polymer' \in \polymerClique[v]} f(\polymer') \weightPart{i}[\polymer']
	= \sum_{v \in \neighbors{\polyVertex{\polymer}}[G^2]} \sum_{k \in \N_{> 0}} \sum_{\substack{\polymer' \in \polymerClique[v] \\\absolute{\polyVertex{\polymer'}} = k}} f(\polymer') \weightPart{i}[\polymer'] .
	\]
	
	Because $G$ is a bipartite $\alpha$-expander, for all $\polymer \in \polysPart{i}$, we have $\weightPart{i}[\polymer] \le 1/\fugacity^{\alpha \absolute{\polyVertex{\gamma}}}$. 
	Further, note that the degree of $G^2$ is bounded by $\Degree^2$.
	By \Cref{lemma:count_vertex} and our definition of $f$, we obtain
	\[
	\sum_{v \in \neighbors{\polyVertex{\polymer}}[G^2]} \sum_{k \in \N_{> 0}} \sum_{\substack{\polymer' \in \polymerClique[v] \\\absolute{\polyVertex{\polymer'}} = k}} f(\polymer') \weightPart{i}[\polymer']	
	\le \Degree^2 \absolute{\polyVertex{\polymer}} \sum_{k \in \N_{> 0}} \frac{\eulerE^k \left( \Degree^2 \right)^{k-1}}{k^{3/2} \sqrt{2 \pi} } \cdot k \cdot \frac{1}{\fugacity^{\alpha k}}
	= \frac{\absolute{\polyVertex{\polymer}}}{\sqrt{2 \pi}} \sum_{k \in \N_{> 0}} \left( \frac{\eulerE \Degree^2}{\fugacity^{\alpha}} \right)^k \frac{1}{\sqrt{k}} .
	\]
	
	For $\fugacity \ge \left( \eulerE \Degree^2/0.8  \right)^{1/\alpha}$, we get
	\[
	\frac{\absolute{\polyVertex{\polymer}}}{\sqrt{2 \pi}} \sum_{k \in \N_{> 0}} \left( \frac{\eulerE \Degree^2}{\fugacity^{\alpha}} \right)^k \frac{1}{\sqrt{k}}
	\le \frac{\absolute{\polyVertex{\polymer}}}{\sqrt{2 \pi}} \sum_{k \in \N_{> 0}} \left( 0.8 \right)^k \frac{1}{\sqrt{k}}
	\le \frac{\absolute{\polyVertex{\polymer}}}{\sqrt{2 \pi}} \sqrt{2 \pi}
	= f(\polymer) .
	\]
	
	It remains to show, for all $v \in \partition{i}$, that $\polymerClique[v]$ satisfies the clique truncation condition for a $g\colon \R \to \R_{> 0}$ and a $B \in \R_{>0}$.
	To this end, for all $\polymer \in \polysPart{i}$, let $\size{\polymer} = \absolute{\polyVertex{\polymer}}$,, let $g(\size{\polymer}) = \eulerE^{0.2 \size{\polymer}}$, and let $B = 1$.
	Analogously to our verification of the \pmc, we see, for all $v \in \partition{i}$, that
	\[
	\sum_{\polymer \in \polymerClique[v]} g(\size{\polymer}) \weightPart{i}[\polymer] 
	\le \frac{1}{\Degree^2 \sqrt{2 \pi}} \sum_{k \in \N_{> 0}} \left( \frac{\eulerE \Degree^2}{\fugacity^\alpha} \right)^k \frac{1}{k^{3/2}} \eulerE^{0.2 k}
	= \frac{1}{\Degree^2 \sqrt{2 \pi}} \sum_{k \in \N_{> 0}} \left( \frac{\eulerE^{1.2} \Degree^2}{\fugacity^\alpha} \right)^k \frac{1}{k^{3/2}} .		
	\]
	For $\fugacity \ge \left( \eulerE \Degree^2/0.8 \right)^{1/\alpha}$, we get
	\[
	\frac{1}{\Degree^2 \sqrt{2 \pi}} \sum_{k \in \N_{> 0}} \left( \frac{\eulerE^{1.2} \Degree^2}{\fugacity^\alpha} \right)^k \frac{1}{k^{3/2}}
	\le \frac{1}{\Degree^2 \sqrt{2 \pi}} \sum_{k \in \N_{> 0}} \left( 0.8 \eulerE^{0.2} \right)^k \frac{1}{k^{3/2}} 
	< \frac{1}{\Degree^2 \sqrt{2 \pi}} 2.2 
	\le B .
	\]
	
	Last, we bound the runtime of the FPRAS.
	By \Cref{lemma:enum_vertex}, we can enumerate each polymer clique up to size $k$ in time $t(k) \in \eulerE^{\bigO{ k \log(\Degree)}}$.
	As $g^{-1}\colon x \mapsto 5 \ln(x)$, we have $t \circ g^{-1}\colon x \mapsto x^{\bigO{\ln (\Degree)}}$, which is polynomial for $\Degree \in \bigTheta{1}$.
	For the runtime bound, note that we truncate to size $k = g^{-1}(n/\err)$.
	Thus, the time for computing each step of the polymer Markov chain is bounded by $t(k) = \left( n/\err \right)^{\bigO{\ln(\Degree)}}$, which dominates the runtime.
\end{proof}

\paragraph*{Choice of $f$ and Calculation of \Cref{table:bounds}}
Note that the choice of the function $f$ used in the \pmc is very sensitive to the bound on the number of subgraphs.
For the bound stated in \Cref{lemma:count_vertex}, it turns out that using $f(\polymer) = \absolute{\polyVertex{\polymer}}$ yields the best bounds on $\fugacity$ (see the proof of \Cref{thm:hc_appx} for details).
With this choice of $f$, the condition that we identified in \Cref{remark:spmc} is similar to the mixing condition of \cite[Definition~1]{CGGPSV19}, except that we do not require a strict inequality.
Further, note that such a choice of $f$ is not possible for the Koteck{\'y}--Preiss condition~\cite{1986:Kotecky:cluster_expansion_polymer_models}. 
If purely exponential bounds on the number of subgraphs are used, the best results are usually obtained by setting $f$ to take an exponential form.
A detailed understanding of how to choose $f$ might be of interest for applications to specific graph classes and other combinatorial structures.

The results for the remaining applications in \Cref{table:bounds} are derived via similar calculations.
For the Potts model on expander graphs and the hard-core model on unbalanced bipartite graphs, we use \Cref{lemma:count_vertex,lemma:enum_vertex} together with the same function $f$ for the \pmc as in the proof of \Cref{thm:hc_appx}.
For the perfect matching polynomial, we use the bounds for the number of polymers and for polymer enumeration that are stated by Casel et~al.~\cite{CFFGL19}, and we choose $f(\polymer) = \eulerE^{a \size{\polymer}}$ for $a \approx 0.2$.

\end{document}